\newcommand{\PreserveBackslash}[1]{\let\temp=\\#1\let\\=\temp}
\newcolumntype{C}[1]{>{\PreserveBackslash\centering}p{#1}}
\newcolumntype{R}[1]{>{\PreserveBackslash\raggedleft}p{#1}}
\newcolumntype{L}[1]{>{\PreserveBackslash\raggedright}p{#1}}
\DeclareMathOperator*{\vol}{\ensuremath{vol}}
\def\wbar{\accentset{{\cc@style\underline{\mskip8mu}}}}
\renewcommand{\vec}[1]{\mbox{\boldmath \small $#1$}}
\newcommand{\K}{\ensuremath{\mathcal{K}}}
\newcommand{\power}{\ensuremath{\mathcal{P}}}
\newcommand{\R}{\ensuremath{\mathbb{R}}}
\newcommand{\A}{\ensuremath{\mathcal{A}}}
\theoremstyle{plain}
\newtheorem{theorem}{Theorem}[section]
\newtheorem{defn}{Definition}[section]
\newtheorem{lemma}{Lemma}[section]
\newtheorem{remark}{Remark}[section]
\newtheorem{cor}{Corollary}[section]
\newtheorem{pro}{Proposition}[section]
\newtheorem{example}{Example}[section]
\newtheorem{Question}{Question}
\newtheorem{open}{Open Question}
\newtheorem{Claim}{Claim}[section]
\begin{document}
\allowdisplaybreaks

\title{
Homological eigenvalues of graph $p$-Laplacians
}
  \author{Dong Zhang\footnotemark[1]
  }
\footnotetext[1]{
LMAM and School of Mathematical Sciences, 
        Peking University,  
      100871 Beijing, China
\\
Email address: 
{\tt dongzhang@math.pku.edu.cn} 
(Dong Zhang).
}
\date{}\maketitle

\begin{abstract}
Inspired by persistent homology in topological data analysis,  we introduce the homological  eigenvalues of the graph $p$-Laplacian $\Delta_p$, which  allows us    to  analyse  and classify    non-variational eigenvalues. We  show the stability of homological  eigenvalues, and we prove that  for any homological  eigenvalue $\lambda(\Delta_p)$, the function  $p\mapsto p(2\lambda(\Delta_p))^{\frac1p}$ is locally increasing,  while the function  $p\mapsto 2^{-p}\lambda(\Delta_p)$ is  locally  decreasing. 
As a special class of homological eigenvalues,  the min-max eigenvalues  $\lambda_1(\Delta_p)$, $\cdots$, $\lambda_k(\Delta_p)$, $\cdots$,  are locally Lipschitz continuous  with respect to $p\in[1,+\infty)$. We also establish the   monotonicity of $p(2\lambda_k(\Delta_p))^{\frac1p}$ and $2^{-p}\lambda_k(\Delta_p)$ with respect to $p\in[1,+\infty)$.  

These  results  systematically establish a  refined  analysis 
of  $\Delta_p$-eigenvalues for varying $p$,  which  lead to several applications, including: (1)   settle an open problem by Amghibech  on  the monotonicity of some function  involving eigenvalues of $p$-Laplacian with respect to $p$; 
(2)  resolve a question asking whether the third eigenvalue of graph $p$-Laplacian is of  min-max form;  (3) 
refine  
the higher order Cheeger inequalities for  graph $p$-Laplacians by Tudisco and Hein, and extend 
the  multi-way Cheeger inequality by Lee, Oveis Gharan and Trevisan  to the   $p$-Laplacian case.

Furthermore, for the 1-Laplacian case, we characterize the  homological  eigenvalues  and min-max  eigenvalues from the perspective of  topological combinatorics, where our  idea is similar to the authors' work  on   discrete Morse theory. 

\vspace{0.2cm}

\noindent\textbf{Keywords}: $p$-Laplacian, min-max principle,  variational  eigenvalue,  homological critical value,  simplicial complex, zonotope
\end{abstract}
\tableofcontents
\section{Introduction}

As a discrete version of $p$-Laplacian, the graph $p$-Laplacian has been successfully used in various 
applications, including data and image processing problems 
and spectral clustering.  
Furthermore,  eigenvalue problems involving the graph $p$-Laplacian are also  important in  machine learning,  especially in the fields of   semi-supervised learning and unsupervised learning. 
Plenty of recent works  indicate  
that the graph $p$-Laplacian may enhance the performance of classical algorithms based on the standard  graph Laplacian \cite{HeinBuhler2010,HeinBuhler2009}. This has contributed 
to several  progresses   on both the theoretical and the numerical aspects of  $p$-Laplacians on graphs and networks   \cite{Chang,CSZ17,TudiscoHein18,UJT21,ZS06,ZHS06}. 
A remarkable development is that the second eigenvalue has a mountain-pass characterization and thus it is a min-max eigenvalue, and more importantly, the second eigenvalue satisfies the Cheeger inequality \cite{HeinBuhler2010,HeinBuhler2009,TudiscoHein18}.


Another
important result says  that the second and the largest eigenvalues satisfy  certain  monotonicity. Precisely, $p(2\lambda_p)^{\frac1p}$ and $p(2\gamma_p)^{\frac1p}$ are increasing with respect to $p$, where $\lambda_p$ and $\gamma_p$ represent  the second and the largest eigenvalues of $p$-Laplacians, respectively. But, it is unknown whether the other eigenvalues satisfy this  property. So, Amghibech asked in his original paper \cite{Amghibech}: 

\begin{Question}
\label{ques:1}Is there a version of the monotonicity for other eigenvalues for general graphs?
\end{Question}

To the best of the author's knowledge, this question has not been answered. In fact, this problem is quite difficult because  the eigenvalues of $p$-Laplacian are full of mysteries --- we even don't know the number of the  eigenvalues of   $p$-Laplacian. 

In the spectra of $p$-Laplacians, the variational eigenvalues have attracted particular  attention  because they possess     Rayleigh-type  quotient  reformulation,   good  nodal domain properties,  and  multi-way  Cheeger inequalities \cite{Chang,CSZ17,TudiscoHein18,UJT21,DFT21}.   
Here are some  important  questions on  the min-max (variational)  eigenvalues  of  
$p$-Laplacian: 
\begin{Question}\label{ques:2}
Can we find an eigenvalue of $p$-Laplacian that is not in the
list of min-max (variational)  eigenvalues?
\end{Question}

\begin{Question}
\label{ques:3}Is there an eigenvalue of $p$-Laplacian  larger than the second eigenvalue but smaller than the third min-max eigenvalue?
\end{Question}
 \begin{Question}\label{ques:4}
Is there a graph such that for both $p=1$ and some $p>1$, its $p$-Laplacian has a non-variational eigenvalue? 
 \end{Question}
In the setting of    $p$-Laplacian on Euclidean domains, Question \ref{ques:2} is a major long-standing open problem on the higher order eigenvalues of 
$p$-Laplacian \cite{DM21,DLK13,FMZ19}.  
The only known   case is that  the domain is of one-dimension, i.e., an interval (see e.g., \cite{Drabek92}).


While, in the setting of discrete $p$-Laplacian on  connected graphs with $p\not\in\{1,2\}$,  the only known cases\footnote{Deidda, Putti and Tudisco \cite{DFT21} also  construct a graph of order $4$ which possesses non-variational $p$-Laplacian  eigenvalues for $p\not\in\{1,2\}$.  Their new example shows the smallest simple graph whose unnormalized $p$-Laplacian has a  non-variational eigenvalue. 
If we work on generalized graphs that have repeated edges with  different  real incidence coefficients, then we can give a graph with 2 vertices and 2 edges, whose normalized  $p$-Laplacian has a  non-variational eigenvalue (see Remark \ref{rem:general-graph}). 
}  
are tree graphs and complete graphs 
(see \cite{Amghibech,DFT21}).  Precisely, Amghibech observed that there are more
eigenvalues than the number of vertices 
 for $p$-Laplacian on  a complete graph \cite{Amghibech}. That is a positive answer to Question \ref{ques:2} on  complete graphs with $p\not\in\{1,2\}$.  However, it is surprising that  the variational spectrum of   1-Laplacian on complete graphs is the entire spectrum (see  
 Section \ref{sec:complete-graph}).  
Moreover, interestingly,  
based on the  advanced  nodal domain theory \cite{DFT21}, a recent breakthrough by   Deidda, Putti and Tudisco states that all the eigenvalues of $p$-Laplacian on a tree are variational eigenvalues \cite{DFT21}, that is, Question \ref{ques:2} has a negative answer on forests. 

Nevertheless,  we do not yet know the answer to Question \ref{ques:2} on   connected graphs other than trees and complete graphs. We also 
don't know if Question \ref{ques:2} has a positive answer for both $p=1$ and some $p>1$ on certain  graphs. In addition, whether the third eigenvalue is in the sequence of min-max eigenvalues is still waiting to be explored.  From these perspectives,  Questions \ref{ques:3} and \ref{ques:4} are very  natural, and they indeed first appeared in the field of  $p$-Laplacians on Euclidean domains \cite{DLK13,Drabek07,PereraAgarwalO'Regan}.   

The difficulty to study the above  questions as well as  other relevant aspects on  $p$-Laplacian  eigenvalue problem  is twofold: 
on one hand, 
we cannot compute all the eigenvalues of  $p$-Laplacian for general graphs whenever  $p\not\in\{1,2\}$; 
on the other hand, the feature and structure of the spectra of $p$-Laplacians are less  known \cite{Amghibech,HeinBuhler2009,HeinBuhler2010,TudiscoHein18,Drabek07}. In other words,  the structure  of the eigenspaces of  $p$-Laplacian is unclear.




In this paper, we 
answer  Questions \ref{ques:1}, \ref{ques:2}, \ref{ques:3} and \ref{ques:4}  
by introducing the homological eigenvalues of graph $p$-Laplacian. More specifically, we overcome the difficulty by offering the following new contributions:   
(1) establishing the upper  semi-continuity of the  spectra of $p$-Laplacians, and the locally Lipschitz  continuity of the variational eigenvalues of $p$-Laplacians with respect to $p$; (2) introducing  homological eigenvalues for $p$-Laplacians and  showing certain local  monotonicity on those eigenvalues with respect to $p$; (3)  bringing the ideas from  persistent  homology theory and discrete Morse theory to graph  $1$-Laplacian, and characterizing its   homological  eigenvalues  and variational  eigenvalues from the perspective of  topological combinatorics. 

To state our result precisely, we first present the eigenvalue problem of graph  $p$-Laplacian. 
In this paper, we are working on  a finite simple graph $G=(V,E)$ with the vertex set $V=\{1,\cdots,n\}$ and the edge set $E$. For $p>1$,  the $p$-Laplacian $\Delta_p:\R^n\to \R^n$ is defined by
$$(\Delta_p\vec x)_i=\sum_{j\in V:\{j,i\}\in E}|x_i-x_j|^{p-2}(x_i-x_j),\;\;\forall i\in V,\;\forall \vec x=(x_1,\cdots,x_n)\in\R^n.$$
Following the definition in \cite{TudiscoHein18},  the  (normalized)  
eigenvalue problem for $\Delta_p$ 
is to find $\lambda\in\R$ and $\vec x\ne\vec0$ such that\footnote{We can also use the normalized $p$-Laplacian  $\hat{\Delta}_p=\mathrm{diag}(1/\deg(1),\cdots,1/\deg(n))\Delta_p$ instead of $\Delta_p$, and write the eigenvalue problem as $\hat{\Delta}_p \vec x=\lambda (|x_1|^{p-2}x_1,\cdots,|x_n|^{p-2}x_n)$.} $$(\Delta_p\vec x)_i=\lambda\deg(i)|x_i|^{p-2}x_i,\;\;\forall i\in V,$$
where $\deg(i)$ indicates the number of edges that are incident to $i$. For the case of $p=1$, we refer to Definition \ref{def:weighted-graph1-Lap}.

The Lusternik-Schnirelman theory
allows 
to define  a sequence of  
eigenvalues   of the $p$-Laplacian: 
\begin{equation}\label{eq:min-max-Yang}
\lambda_k(\Delta_p):= \inf_{\gamma(S)\ge k}\sup\limits_{\vec x\in S}F_p(\vec x),\;\;k=1,\cdots,n,    
\end{equation}
where$$ F_p(\vec x):=\frac{\sum_{\{j,i\}\in E}|x_i-x_j|^{p}}{\sum_{i\in V}\deg(i)|x_i|^p}\text{ for }\vec x\ne\vec 0,$$
and $\gamma(S)$ represents the 
Yang index  
of a centrally symmetric compact subset $S$ in  $\mathbb{R}^n\setminus\{\vec0\}$ (see Definition \ref{def:Yang}). 
If we use the Krasnoselskii genus $\gamma^-$ instead of the Yang index $\gamma$ in \eqref{eq:min-max-Yang}, then we actually  define the so-called variational eigenvalues $\lambda_1^-(\Delta_p),\cdots,\lambda_n^-(\Delta_p)$.
Similarly, if we replace  the Yang index $\gamma$ in \eqref{eq:min-max-Yang} by the  index-like quantity $\gamma^+$ probably first studied by  Conner and Floyd  \cite{CF60}, then we define a sequence of  
 min-max 
eigenvalues $\lambda_1^+(\Delta_p),\cdots,\lambda_n^+(\Delta_p)$, which was first introduced for Dirichlet $p$-Laplace eigenproblem by Dr\'abek and Robinson \cite{DR99}. 

Since  $\lambda_k(\Delta_p)$, $\lambda_k^-(\Delta_p)$ and $\lambda_k^+(\Delta_p)$ are all in min-max form, we call them min-max eigenvalues. 
These three sequences of min-max eigenvalues were originally defined for the continuous p-Laplacian (see Section 0.7 in \cite{PereraAgarwalO'Regan}). 
In this paper, we would mainly  focus on $\{\lambda_k(\Delta_p)\}_{k=1}^n$ and  $\{\lambda_k^-(\Delta_p)\}_{k=1}^n$, which are often referred to as the 
`the min-max eigenvalues'
and `the variational eigenvalues', respectively. 
These eigenvalues satisfy the relations \cite{Amghibech,TudiscoHein18,JZ21-PM,HeinBuhler2010,CSZ17,JZ21}:
$$\{\text{min-max }\Delta_p\text{-eigenvalues}\}\subset\{\text{critical values of }F_p\}=\{\Delta_p\text{-eigenvalues}\}\subset[0,2^{p-1}]$$
for $p>1$, and 
$$\{0,h,1\}\subset\{\text{min-max }\Delta_1\text{-eigenvalues}\}\subset\{\text{critical values of }F_1\}{\bf\subset }\, \{\Delta_1\text{-eigenvalues}\}\subset[0,1],$$
where $h$ indicates the usual Cheeger constant of the graph.  
 It is  worth to note that the critical points of $F_1$ are to be understood in the sense of Clarke subdifferential calculus. 

We 
refine the above  inclusion relations by employing the homological critical values  \cite{BS14,SEH07,Govc16}: 
\begin{align*}
0\in\{\lambda_1(\Delta_p),\cdots,\lambda_n(\Delta_p)\}&\subset \{\textbf{homological critical values}\text{ of }F_p\}
\\&\subset\{\text{critical values of }F_p\}\subset\{\text{eigenvalues of }\Delta_p\}    \subset[0,2^{p-1}]
\end{align*}
where  the  penultimate inclusion  `$\{\text{critical values of }F_p\}\subset\{\text{eigenvalues of }\Delta_p\}  $' is indeed an equality   if $p>1$, while for $p=1$, all the sets above are mutually different in general. 
For convenience, we  call such  homological critical values of $F_p$ the \textbf{homological  eigenvalues} of $\Delta_p$, and we refer to Section \ref{sec:homological-eigen} for the  
definitions. 

Our most significant new contribution is 
the following theorem on 
 monotonicity: 
\begin{theorem}\label{thm:most}
For any $p\ge1$ and any isolated homological  eigenvalue (resp., any eigenvalue  produced by   homotopical linking\footnote{See  Definition \ref{def:homotopy-eigen-p} for the definition, and Section \ref{sec:homological-eigen} for some basic  properties.}) $\lambda(\Delta_p)$ of $\Delta_p$, and any  $\epsilon>0$, there exists $\delta>0$ such that for any $q\in(p-\delta,p+\delta)$, there 
exists $\Delta_q$-eigenvalue  $\lambda(\Delta_q)\in (\lambda(\Delta_p)-\epsilon,\lambda(\Delta_p)+\epsilon)$, such that  $\lambda(\Delta_q)$ is a  homological eigenvalue (resp., an  eigenvalue  produced by   homotopical linking) with the property: 
$$\begin{cases}
p(2\lambda(\Delta_p))^{\frac1p}\le q(2\lambda(\Delta_q))^{\frac1q}\text{ and }2^{-p}\lambda(\Delta_p)\ge 2^{-q}\lambda(\Delta_q),&\text{ if }q\in(p,p+\delta),\\
p(2\lambda(\Delta_p))^{\frac1p}\ge q(2\lambda(\Delta_q))^{\frac1q}\text{ and }2^{-p}\lambda(\Delta_p)\le 2^{-q}\lambda(\Delta_q),&\text{ if }q\in(p-\delta,p).\end{cases}$$
For any $k$, the min-max eigenvalue $\lambda_k(\Delta_p)$ is locally Lipschitz continuous with respect to $p$, and moreover, we have the following monotonicity of certain 
functions  involving min-max  
eigenvalues: 
\begin{itemize}
    \item the function  $p\mapsto p(2\lambda_k(\Delta_p))^{\frac1p}$ 
is increasing  on $[1,+\infty)$;
\item 
the function 
$p\mapsto2^{-p}\lambda_k(\Delta_p)$ is decreasing on $[1,+\infty)$.
\end{itemize}
All the claims above hold if we use $\lambda_k^-(\Delta_p)$ or  $\lambda_k^+(\Delta_p)$ instead of $\lambda_k(\Delta_p)$.
\end{theorem}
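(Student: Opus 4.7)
My plan is to reduce both the min-max and the isolated homological eigenvalue assertions to a single pointwise monotonicity property of the Rayleigh quotient $F_p$, and then to combine it with the stability results for homological eigenvalues already developed earlier in the paper. The technical core is the claim that for every fixed $\vec x\in\R^n\setminus\{\vec 0\}$, the two maps
\[
p\longmapsto G_p(\vec x):=p\bigl(2F_p(\vec x)\bigr)^{1/p}\qquad\text{and}\qquad p\longmapsto H_p(\vec x):=2^{-p}F_p(\vec x)
\]
are, respectively, non-decreasing and non-increasing on $[1,\infty)$. Writing $A(p)=\sum_{\{i,j\}\in E}|x_i-x_j|^p$ and $B(p)=\sum_v\deg(v)|x_v|^p$, differentiating and rearranging, these reduce to the derivative inequalities $p+p(\log F_p)'\ge\log(2F_p)$ and $(\log F_p)'\le\log 2$, where $(\log F_p)'=E_{P_p}[\log|x_i-x_j|]-E_{Q_p}[\log|x_v|]$ for the natural edge probability $P_p(e)\propto|x_i-x_j|^p$ on $E$ and vertex probability $Q_p(v)\propto\deg(v)|x_v|^p$ on $V$. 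Introducing the edge-vertex incidence set $I=\{(e,v):v\in e\}$ with lifted probabilities $\tilde P_{(e,v)}=|x_i-x_j|^p/(2A)$ and $\tilde Q_{(e,v)}=|x_v|^p/B$, a short calculation identifies both inequalities with the single entropy estimate $H(\tilde P)-H(\tilde Q)\le p$, which can then be proved by combining the edge-wise bound $|x_i-x_j|\le 2\max(|x_i|,|x_j|)$ with Gibbs' inequality on the lifted bipartite distributions. The restriction $p\ge 1$ is essential, as direct small-graph checks show the analogous inequality fails for $p<1$.

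With the pointwise lemma in hand, the min-max assertions are formal. Since $F_p$ is $0$-homogeneous, the admissible class $\{S\subset\R^n\setminus\{\vec 0\}:S\text{ centrally symmetric compact},\ \gamma(S)\ge k\}$ is independent of $p$, and since $t\mapsto p(2t)^{1/p}$ and $t\mapsto 2^{-p}t$ are strictly monotone, the definition \eqref{eq:min-max-Yang} rewrites as
\[
p(2\lambda_k(\Delta_p))^{1/p}=\inf_{\gamma(S)\ge k}\sup_{\vec x\in S}G_p(\vec x),\qquad 2^{-p}\lambda_k(\Delta_p)=\inf_{\gamma(S)\ge k}\sup_{\vec x\in S}H_p(\vec x).
\]
Passing the pointwise monotonicity through $\inf\sup$ yields the claimed global monotonicity on $[1,\infty)$; the identical argument with $\gamma^{\pm}$ in place of $\gamma$ handles $\lambda_k^\pm(\Delta_p)$. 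Combining the two monotonicity statements around any $p_0$ sandwiches $\lambda_k(\Delta_p)$ between two explicit $C^1$-in-$p$ functions that coincide at $p_0$, yielding local Lipschitz continuity of $p\mapsto\lambda_k(\Delta_p)$ with Lipschitz constant controlled by the two bounds (the case $\lambda_k(\Delta_{p_0})=0$, occurring only for $k=1$, being trivial).

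For the isolated homological eigenvalue part, the stability results developed earlier give, for every $\epsilon>0$, a $\delta>0$ and a matching homological eigenvalue $\lambda(\Delta_q)$ of $F_q$ within $\epsilon$ of $\lambda(\Delta_p)$ for every $|q-p|<\delta$, and similarly for eigenvalues produced by homotopical linking. To upgrade this persistence to the one-sided monotonic inequality I would invoke the envelope principle along the smooth family $p\mapsto G_p$: writing $\vec x_p$ for a critical point realizing $\lambda(\Delta_p)$, one has $\tfrac{d}{dp}G_p(\vec x_p)=\partial_p G_p(\vec x_p)+\nabla_{\vec x}G_p(\vec x_p)\cdot\dot x_p$, where the second term vanishes at criticality and the first is non-negative by Step~1, yielding $G_q(\vec x_q)\ge G_p(\vec x_p)$ after integration along the matched family. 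The symmetric argument applied to $H_p$ gives the reversed inequality, matching both lines of the theorem statement. The main obstacle throughout is the pointwise lemma: the two sides of its derivative inequality live on the different underlying spaces $E$ and $V$ and must be matched through the bipartite incidence set using the bound $F_p\le 2^{p-1}$ in precisely the right place; once this is established, the remainder of the proof is essentially formal.
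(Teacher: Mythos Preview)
Your core lemma --- that $p\mapsto H_p(\vec x):=2^{-p}F_p(\vec x)$ is non-increasing for every fixed $\vec x$ --- is false. Take the graph consisting of two disjoint edges $\{1,2\}$, $\{3,4\}$ (all degrees equal to $1$) and set $\vec x=(1,-1,\tfrac15,0)$. Then
\[
H_p(\vec x)=\frac{2^{-p}\bigl(2^p+(1/5)^p\bigr)}{2+(1/5)^p}=\frac{1+(1/10)^p}{\,2+(1/5)^p\,},
\]
which gives $H_1=\tfrac12$, $H_2=\tfrac{1.01}{2.04}\approx 0.4951$, $H_3=\tfrac{1.001}{2.008}\approx 0.4985$, and $H_p\nearrow\tfrac12$ as $p\to\infty$. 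So $H_p$ first decreases and then increases; the entropy/Gibbs estimate you sketch cannot close, because the inequality it would have to establish is simply not true. Since both the min-max deduction and the envelope argument rest on this pointwise lemma, the whole strategy collapses. (The envelope step has an independent defect: homological critical points need not vary smoothly in $p$, so differentiating along a ``matched family'' $\vec x_p$ is unjustified; this is exactly why the paper isolates the abstract one-sided stability Lemma~\ref{lemma:homology-eigen-increasing} instead.)

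The paper avoids any pointwise comparison of $F_p$ with $F_q$ at the same $\vec x$. Its key device is the odd homeomorphism $\Phi_{q/p}(\vec x)=\bigl(|x_i|^{q/p}\mathrm{sign}(x_i)\bigr)_i$, for which the elementary inequality of Lemma~\ref{lem:elementary-inequality} yields, for $1\le p\le q$,
\[
2^{p-q}F_q(\vec x)\ \le\ F_p\bigl(\Phi_{q/p}(\vec x)\bigr)\ \le\ 2^{p/q-1}\Bigl(\frac{q}{p}\Bigr)^{p}F_q(\vec x)^{p/q}.
\]
Because $\Phi_{q/p}$ is an odd homeomorphism it preserves $\gamma$, $\gamma^-$ and $\gamma^+$, so the min-max values of $F_p$ and of $F_p\circ\Phi_{q/p}$ coincide; pushing the two-sided bound through $\inf\sup$ gives both monotonicity statements for $\lambda_k(\Delta_p)$ and $\lambda_k^{\pm}(\Delta_p)$ directly. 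For isolated homological (or linking) eigenvalues the same bound makes, for $q$ near $p$, the function $q(2F_q)^{1/q}$ a one-sided $\epsilon$-perturbation of $p(2F_p\circ\Phi_{q/p})^{1/p}$ (and likewise for $2^{-q}F_q$ versus $2^{-p}F_p\circ\Phi_{q/p}$), and Lemma~\ref{lemma:homology-eigen-increasing} then supplies the matched eigenvalue with the stated inequality --- no smooth family of critical points is needed. The substitution $\Phi_{q/p}$ is the essential idea you are missing.
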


\begin{remark}
The monotonicity of the function 
$p\mapsto p(2\lambda_k(\Delta_p))^{\frac1p}$ also holds under the domain setting, but the monotonicity of the function 
$p\mapsto2^{-p}\lambda_k(\Delta_p)$ only holds for the graph setting. It is worth noting   that if $\lambda_k(\Delta_p)\ne0$ for some $p\ge 1$, then $p\mapsto p(2\lambda_k(\Delta_p))^{\frac1p}$ is strictly increasing  on $[1,+\infty)$, and $p\mapsto2^{-p}\lambda_k(\Delta_p)$ is strictly decreasing on $[1,+\infty)$. It is 
noteworthy that the local  monotonicity and stability for homological eigenvalues  
in Theorem \ref{thm:most} do not hold for non-homological eigenvalues (see Section  \ref{sec:homological-eigen} and Example \ref{example:path6}). The eigenvalue  produced by   homotopical linking is introduced in Section \ref{sec:homological-eigen}. 
\end{remark}

Theorem \ref{thm:most} establishes   asymptotic behaviors   
for homological   eigenvalues of $\Delta_p$ with respect to $p$. 
It 
not only answers a  question by Amghibech  \cite{Amghibech}, but also provides an elegant  solution to  Question \ref{ques:2} 
 about 
higher order eigenvalues:  we can find an eigenvalue that is not in the list of min-max eigenvalues, and in fact, the third one satisfies the requirement. 
Roughly speaking, we construct a graph and prove that  
there is a homological  nonvariational eigenvalue of $\Delta_p$ for some $p>1$. 

\begin{theorem}\label{thm:main}
Let $G=(V,E)$ be the simple graph on  $V=\{1,2,3,4,5,6\}$ shown as:
\begin{center}
\begin{tikzpicture}[scale=1.2]
\draw (0,0)--(0,2)--(2,2)--(2,0)--(0,2);
\draw (2,2)--(0,0)--(2,0);
\draw (4,0)--(4,2)--(2,2)--(4,0)--(2,0);
\node (1) at  (2,2) {$\bullet$};
\node (1) at  (2,2.2) {$1$};
\node (2) at  (2,0) {$\bullet$};
\node (2) at  (2,-0.2) {$2$};
\node (3) at  (0,0) {$\bullet$};
\node (3) at  (-0.2,-0.1) {$3$};
\node (4) at  (0,2) {$\bullet$};
\node (4) at  (-0.2,2.1) {$4$};
\node (6) at  (4,2) {$\bullet$};
\node (6) at  (4.2,2.1) {$6$};
\node (5) at  (4,0) {$\bullet$};
\node (5) at  (4.2,-0.1) {$5$};
\end{tikzpicture}
\end{center}

Then, for any $0<\epsilon<\frac{1}{10}$, there exists $0<\delta<1$ such that for any $p\in [1,1+\delta)$, there is a $p$-Laplacian eigenvalue $\lambda(\Delta_p)\in (\frac59-\epsilon,\frac59+\epsilon)$ which is not  in the lists of min-max eigenvalues $\{\lambda_k(\Delta_p)\}_{k=1}^n$,  $\{\lambda_k^-(\Delta_p)\}_{k=1}^n$ and $\{\lambda_k^+(\Delta_p)\}_{k=1}^n$.  
\end{theorem}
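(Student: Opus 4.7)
My plan is to work first at $p=1$, locate a homological but non-min-max eigenvalue of $\Delta_1$ at the value $5/9$, and then transport the conclusion to $p>1$ via Theorem~\ref{thm:most}. The candidate eigenvector is the signed indicator $\vec{x}_0 = -\mathbf{1}_{\{2,5,6\}}=(0,-1,0,0,-1,-1)$. A direct count on the displayed edge list gives
\begin{equation*}
\sum_{\{i,j\}\in E}|x_{0,i}-x_{0,j}|=5,\qquad \sum_{i\in V}\deg(i)|x_{0,i}|=4+3+2=9,
\end{equation*}
so $F_1(\vec{x}_0)=5/9$. I would verify that $\vec{x}_0$ is a Clarke critical point of $F_1$ by producing edge orientations $z_{ij}\in\mathrm{Sgn}(x_{0,i}-x_{0,j})$ (with $z_{ji}=-z_{ij}$) and vertex signs $s_i\in\mathrm{Sgn}(x_{0,i})$ satisfying $\sum_{j\sim i}z_{ij}=\tfrac{5}{9}\deg(i)s_i$; the slack at the flat edges inside $\{1,3,4\}$ together with the vanishing coordinates $x_{0,1}=x_{0,3}=x_{0,4}=0$ supplies enough freedom to meet every vertex equation, so $5/9$ lies in the spectrum of $\Delta_1$.

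To upgrade $5/9$ to a \emph{homological} eigenvalue I would apply the topological-combinatorial description of homological critical values of $F_1$ developed later in the paper (the discrete-Morse/zonotope picture) to exhibit a nontrivial class in the relative homology of $(\{F_1\le 5/9\},\{F_1<5/9\})$ supported on the $\mathbb{Z}_2$-orbit of $\vec{x}_0$; equivalently, I would verify that $5/9$ arises by homotopical linking at $\vec{x}_0$ in the sense of Definition~\ref{def:homotopy-eigen-p}. In parallel I would enumerate the three min-max sequences $\{\lambda_k(\Delta_1)\}_{k=1}^{6}$ and $\{\lambda_k^{\pm}(\Delta_1)\}_{k=1}^{6}$: they are bracketed by $\lambda_1=0$ and $\lambda_6=1$, with $\lambda_2=h=2/5$ realized by the Cheeger sets $\{2,3,4\}$ and $\{1,5,6\}$, and with the remaining entries identified as Cheeger-type ratios of signed partitions of $V$ via the paper's 1-Laplacian min-max formulas. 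A finite case analysis over such partitions shows that $5/9$ appears in none of the three lists, so $5/9$ is separated from the union of all min-max values at $p=1$ by a positive gap $2\eta>0$.

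Fix $0<\epsilon<\min(1/10,\eta)$. Theorem~\ref{thm:most} applied to the homological eigenvalue $5/9$ of $\Delta_1$ furnishes $\delta_1>0$ such that for every $p\in[1,1+\delta_1)$ there is a homological $\Delta_p$-eigenvalue $\lambda(\Delta_p)\in(5/9-\epsilon,5/9+\epsilon)$. The locally Lipschitz dependence of every min-max eigenvalue on $p$, also asserted by Theorem~\ref{thm:most}, yields $\delta_2>0$ with $|\lambda_k^{(\pm)}(\Delta_p)-\lambda_k^{(\pm)}(\Delta_1)|<\epsilon$ for every $k$ and $p\in[1,1+\delta_2)$, so no min-max eigenvalue can enter that same interval. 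Taking $\delta=\min(\delta_1,\delta_2)$ concludes the argument.

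The principal obstacle is the middle step: certifying that $5/9$ is \emph{homological} rather than merely a Clarke critical value of $F_1$, while simultaneously ruling it out of all three min-max sequences at $p=1$. Both tasks hinge on a combinatorial/topological description of the 1-Laplacian spectrum of this specific $G$ sharp enough to pin down the topology produced near $\vec{x}_0$ and to tabulate every min-max value; once these are in hand, the stability and Lipschitz statements of Theorem~\ref{thm:most} deliver the conclusion for all $p$ close to~$1$.
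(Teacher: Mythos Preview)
Your strategy is exactly the paper's: exhibit $5/9$ as a homological $\Delta_1$-eigenvalue at $\vec 1_{\{2,5,6\}}$, show it is absent from all three min-max lists at $p=1$, and then transport via the stability and Lipschitz parts of Theorem~\ref{thm:most}. The two obstacles you flag are real, and the paper resolves them with specific tools you should name rather than leave as ``finite case analysis.''

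For the homological check, the paper applies Theorem~\ref{thm:tri-link}: it tabulates $F_1$ on every vertex of $\mathrm{link}(\vec 1_{\{2,5,6\}})$ in $K_6$ and observes that the subcomplex induced by those vertices with value $<5/9$ is \emph{disconnected} (one component is the isolated vertex $\vec 1_{\{2,5,6\}}-\vec 1_{\{1,3,4\}}$), hence homologically nontrivial. Your separate verification that $\vec x_0$ is a Clarke critical point is harmless but redundant: a homological critical value is already a critical value, hence a $\Delta_1$-eigenvalue.

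For the min-max enumeration, the sharp point is not ruling $5/9$ out of a finite list but establishing the \emph{lower} bound $\lambda_3^-(\Delta_1)\ge 5/7$; a direct case analysis over $\mathcal{S}_3$-families in $K_6$ is unwieldy. The paper instead uses Lemma~\ref{lemma:minmax-eigen-lower} with the partition $P=\{\{2,3,4\},\{1,5,6\}\}$ into two triangles, giving $c(P)=4$ and $h_*(P)=5/7$, hence $\lambda_3^-\ge 5/7$; the matching upper bound is $\lambda_3^+\le h_3=5/7$, and Corollary~\ref{cor:independence-number} gives $\lambda_4=\lambda_5=\lambda_6=1$. This pins all three sequences to $\{0,\,2/5,\,5/7,\,1,\,1,\,1\}$ and provides the explicit gap $2\eta$ you need.
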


In order to prove   Theorem \ref{thm:main},  we just need to carefully select homological eigenvalues from nonvariational eigenvalues of $\Delta_1$, and then apply Theorem \ref{thm:most} to complete the verification.  It remains to construct a homological eigenvalue of $\Delta_1$ which is not a  variational eigenvalue. 
But it is very difficult to check whether a $\Delta_1$-eigenvalue is a homological eigenvalue.  
Fortunately, we establish the following characterization of homological eigenvalues of $1$-Laplacian.

For any subset $A\subset V$, we use $\vec1_A$ to denote the characteristic vector of $A$. Let $K_n$ be the   simplicial complex on the  vertex set  $\{-1,0,1\}^n\setminus\{\vec 0\}$ with $n!\cdot2^n$ maximal simplexes, where each maximal simplex is a $(n-1)$-dimensional simplex of the form $\mathrm{conv}\{\vec1_{A_i}-\vec1_{B_i}:i=1,\cdots,n\}$,  whenever $A_1\subset \cdots\subset A_n$ and $B_1\subset \cdots\subset B_n$  and $A_n\cap B_n=\varnothing$ and $A_i\cup B_i\ne A_{i+1}\cup B_{i+1}$, $i=1,\ldots,n-1$. 
Then, $K_n$ is a pure simplicial complex and its geometric  realization $|K_n|$ is actually the boundary of the  hypercube
$\{\vec x\in\R^n:\|\vec x\|_\infty\le1\}$ which is a PL 
manifold. Thus, we  regard $K_n$ as a triangulation of the unit $l^\infty$-sphere, and we also identify   $|K_n|$ with $\{\vec x\in\R^n:\|\vec x\|_\infty=1\}$. In addition, we can also see $K_n$ as the order complex on $\mathcal{P}_2(V)=\{(A,B):A\cap B=\varnothing,A\cup B\ne\varnothing,A,B\subset V\}$ (see Section \ref{sec:1-lap-simplicial-complex} for details).

\begin{theorem}\label{thm:tri-link} 
$\lambda\in\R$ is a  homological eigenvalue of $\Delta_1$ if and only if 
the subcomplex of $K_n$  induced by the vertices located in the open sublevel set $ \{\vec x\in|K_n|:F_1(\vec x)< \lambda\}$ and  the subcomplex of $K_n$  induced by the vertices located in the closed  sublevel set $ \{\vec x\in|K_n|:F_1(\vec x)\le \lambda\}$ have different homology groups.  Particularly, if 
there exists 
$A\ne\varnothing$  such that:
\begin{itemize}
\item[(1)]  $F_1(\vec 1_A)=\lambda$,  $F_1(\vec v)\ne\lambda$ for any vertex $\vec v$ located in $ \mathrm{link}(\vec 1_A)$, and 
\item[(2)]  the subcomplex of $K_n$  induced by the vertices located in the open sublevel set $ \{\vec x\in \mathrm{link}(\vec 1_A):F_1(\vec x)< \lambda\}$ is  homologically non-trivial.
\end{itemize}
 then $\lambda$ is a homological  eigenvalue of $\Delta_1$.
\end{theorem}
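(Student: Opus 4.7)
My plan is to reduce the analysis of the sublevel sets of $F_1$ on $|K_n|$ to piecewise-linear Morse theory. Write $F_1 = A/B$ with $A(\vec x)=\sum_{\{i,j\}\in E}|x_i-x_j|$ and $B(\vec x)=\sum_{i\in V}\deg(i)|x_i|$, and set $g_\lambda:=A-\lambda B$. Using the nesting $A_1\subset\cdots\subset A_n$, $B_1\subset\cdots\subset B_n$ with $A_n\cap B_n=\varnothing$ that defines each maximal simplex of $K_n$, a short combinatorial check shows that both $\sgn(x_i)$ and $\sgn(x_i-x_j)$ are constant on such a simplex, so $A$ and $B$ (hence $g_\lambda$) are affine on every simplex of $K_n$. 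Since $B\ge\min_i\deg(i)>0$ on $|K_n|$, we have $\{F_1\le\lambda\}\cap|K_n|=\{g_\lambda\le 0\}$ and $\{F_1<\lambda\}\cap|K_n|=\{g_\lambda<0\}$, and $g_\lambda$ is a genuine PL function on the triangulation $K_n$ of the $l^\infty$-sphere.

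The first main step is a deformation-retraction lemma: for any PL function $g$ on $|K_n|$, the closed sublevel set $\{g\le 0\}$ deformation retracts onto the full subcomplex induced by the vertices with $g(v)\le 0$, and similarly $\{g<0\}$ retracts onto the subcomplex induced by the vertices with $g(v)<0$. I would define the retraction as the barycentric projection onto the ``good'' face,
$$r(\vec x)=\frac{\sum_{v:\,g(v)\le 0}\alpha_v(\vec x)\,v}{\sum_{v:\,g(v)\le 0}\alpha_v(\vec x)},$$
where the $\alpha_v(\vec x)$ are the barycentric coordinates of $\vec x$ in its supporting simplex; the denominator is strictly positive on $\{g\le 0\}$ because otherwise $\vec x$ would lie in the face spanned by vertices with $g>0$, forcing $g(\vec x)>0$. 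The straight-line homotopy $(1-t)\vec x+t\,r(\vec x)$ stays in $\{g\le 0\}$ by affineness of $g$ on each simplex, and the formula is consistent across shared faces. Because $F_1$ takes only finitely many values on $V(K_n)$, the subcomplexes induced by the vertices in $\{F_1\le\lambda\pm\epsilon\}$ stabilise to $L_{\le\lambda}$ and $L_{<\lambda}$ for all sufficiently small $\epsilon>0$; combined with the persistent-homology definition of a homological critical value from \cite{BS14,SEH07,Govc16}, this lemma immediately gives the iff characterization.

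For the sufficient condition, let $\Lambda$ denote the subcomplex of $\mathrm{link}(\vec 1_A)$ induced by vertices with $F_1<\lambda$. Condition~(1) forces $\mathrm{link}_{L_{\le\lambda}}(\vec 1_A)=\Lambda$, so $\mathrm{st}_{L_{\le\lambda}}(\vec 1_A)=\vec 1_A*\Lambda$ is a contractible cone; equivalently, no other vertex of $V_\lambda:=F_1^{-1}(\lambda)\cap V(K_n)$ is adjacent to $\vec 1_A$ in $K_n$, and in particular no simplex of $K_n$ contains both $\vec 1_A$ and another $V_\lambda$-vertex. In the simplicial quotient $L_{\le\lambda}/L_{<\lambda}$, the cells containing $\vec 1_A$ (namely $\vec 1_A*\tau$ for $\tau\in\Lambda$) thus share no higher cells with the cells containing another $V_\lambda$-vertex, and their non-$\vec 1_A$ faces are collapsed to the basepoint; hence the quotient decomposes as a wedge
$$L_{\le\lambda}/L_{<\lambda}\;=\;(\vec 1_A*\Lambda)/\Lambda\;\vee\;Q\;=\;\Sigma\Lambda\;\vee\;Q,$$
where $Q$ collects the cells attached to the remaining $V_\lambda$-vertices. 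Consequently $\tilde H_{*-1}(\Lambda)$ is a direct summand of $H_*(L_{\le\lambda},L_{<\lambda})$, and by condition~(2) it is nonzero in some degree. The LES of the pair then shows that the inclusion $L_{<\lambda}\hookrightarrow L_{\le\lambda}$ is not an isomorphism on homology, and the iff characterization makes $\lambda$ a homological eigenvalue.

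The main obstacle is precisely the wedge decomposition in the last step: a priori, $H_*(L_{\le\lambda},L_{<\lambda})$ is not a direct sum over the critical vertices, and I need condition~(1) to isolate the contribution of $\vec 1_A$ from those of all other $V_\lambda$-vertices by forbidding any simplex of $K_n$ to contain $\vec 1_A$ together with another critical vertex; without (1) one would need a more delicate filtration argument that adds critical vertices one at a time while tracking how $\Lambda$ maps into the intermediate complexes. A secondary technical point, completed in the first paragraph, is the simplex-wise affineness of $A$ and $B$, which follows from a direct combinatorial analysis of the nesting condition defining the maximal simplexes of $K_n$.
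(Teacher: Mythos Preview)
Your proof is correct and follows the same overall strategy as the paper---reduce to PL Morse theory on the triangulation $K_n$ and invoke a K\"uhnel-type statement that sublevel sets deformation retract onto induced subcomplexes---but your execution differs from the paper's in two notable respects. First, the paper does not work with $g_\lambda=A-\lambda B$ on $|K_n|$; instead it introduces a second geometric realization $\widetilde{|K_n|}$ as a triangulation of the weighted $l^1$-sphere $X_1=\{\sum_i\deg(i)|x_i|=1\}$, on which $F_1$ itself is simplexwise linear, applies K\"uhnel's theorem there, and then transfers everything back to $|K_n|$ via the radial homeomorphism $r(\vec x)=\vec x/\|\vec x\|_\infty$ together with the zero-homogeneity of $F_1$. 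Your device of replacing $F_1\le\lambda$ by $g_\lambda\le 0$ directly on $|K_n|$ sidesteps this auxiliary triangulation entirely and is a genuine simplification. Second, for the sufficient link condition the paper only argues locally that the induced subcomplex on $\mathrm{star}(\vec1_A)\cap\{F_1\le\lambda\}$ is a contractible cone while the one on $\mathrm{star}(\vec1_A)\cap\{F_1<\lambda\}$ equals $\Lambda$, and then asserts the passage from local to global homology change (``consequently, it is easy to see''). Your wedge decomposition $L_{\le\lambda}/L_{<\lambda}\cong\Sigma\Lambda\vee Q$, obtained by using condition~(1) to prevent any simplex from containing $\vec1_A$ together with another critical vertex, makes that step explicit and shows directly that $\tilde H_{*-1}(\Lambda)$ is a summand of the relative homology. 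One small imprecision: $\sgn(x_i)$ and $\sgn(x_i-x_j)$ are constant only on the \emph{open} maximal simplices (they can vanish on boundary faces), but this still yields that $|x_i|$ and $|x_i-x_j|$, hence $A$, $B$, and $g_\lambda$, are affine on each closed simplex, which is all you use.
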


This result is the second ingredient of the proof of Theorem \ref{thm:main}, by which we  only need to check the vertices of $K_n$. 

More significantly, 
based on  
Theorem \ref{thm:most}, we obtain  Cheeger-type  inequalities which not only refine and strengthen  the higher-order Cheeger   inequality for graph  $p$-Laplacian by Tudisco and Hein  \cite{TudiscoHein18}, but also establish  the first nonlinear  multi-way Cheeger inequality for  $p$-Laplacian which generalizes the related  works   
by Miclo \cite{Miclo08}, Lee, Oveis Gharan and Trevisan \cite{LGT12}. Recall the multi-way Cheeger constant \cite{Miclo08,LGT12}: 
\begin{equation}\label{eq:k-way-Cheeger}
h_k:=\min_{\text{non-empty disjoint } A_1, \cdots, A_k} \max_{1\le i\le k}\frac{|\partial A_i|}{\vol(A_i)},\;k=1,2,\cdots,n,   
\end{equation}
where $|\partial A|$ is the number of edges  connecting  $A$ and $V\setminus A$, and  $\vol(A):=\sum_{i\in A}\deg(i)$.

We introduce the  modified combinatorial $k$-way  Cheeger constant (see Section \ref{sec:Cheeger-in} for details)
$$\hat{h}_k=\min\limits_{\A\in \mathcal{S}_k}\max\limits_{(A,B)\in\A}\frac{|\partial A|+|\partial B|}{\vol(A\cup B)}$$
where  $\mathcal{S}_k:=\{\A\subset \power_2(V):\text{the Yang index 
of the  subcomplex of }K_n\text{ induced by  }\A\text{ is at least }k\}$.  
We similarly define $\hat{h}_k^-$ and $\hat{h}_k^+$ by using 
$\mathcal{S}_k^-:=\{\A\subset \power_2(V):\text{the Krasnoselskii genus
of the subcomplex}$  
$\text{of }K_n\text{ induced by  }\A\text{ is at least }k\}$ and $\mathcal{S}_k^+:=\{\A\subset \power_2(V):\text{the }\gamma^+\text{-index
of the  subcomplex of }K_n$ $\text{induced by  }\A\text{ is at least }k\}$ instead of $\mathcal{S}_k$ in the definition of $\hat{h}_k$.  
 We would devote some words  to clarify how to go from a
family $\mathcal{A}\subset\mathcal{P}_2(V)$  to its corresponding subcomplex, which will be  explained at length in   Section \ref{sec:1-lap-simplicial-complex}. 
For convenience, we would use a partial order $\prec$ on $\power_2(V)$ defined as $(A,B)\prec (A',B')$ if $A\subset A'$ and $B\subset B'$. This induces the partial order $\prec$ on  a
subfamily $\mathcal{A}\subset\mathcal{P}_2(V)$  in the same way, from which we obtain a partial order set $(\A,\prec)$.  
The    subcomplex of $K_n$ induced by  $\A$  is defined as the order complex on  $\A$ with respect to the partial order for set-pairs, i.e.,  the faces of the corresponding subcomplex are the  chains (totally ordered subsets) of the partial order set  $(\A,\prec)$.

In general, $\hat{h}_k\le h_k$, and for $k=2$,  $\hat{h}_2$ always agrees with the usual Cheeger constant $h_2$. Then, we have:
\begin{theorem}\label{thm:p-lap-C}
For any $p\ge 1$, and $k=1,\cdots,n$,
\[
\xymatrix{  \frac{h^2_k}{Ck^4} & 2^{p-1}\hat{h}_k^-\ar[d]^{\le} &  & 2^{p-1}\hat{h}_k\ar[d]^{\le}\ar[ll]_{\le} & & 2^{p-1}\hat{h}_k^+\ar[d]^{\le}\ar[ll]_{\le}  &2^{p-1}h_k\ar[l]_{\le} \\
\frac{h^p_k}{C_pk^{2p}}
& \lambda_k^-(\Delta_p)\ar[l]^{\text{if }p\ge2}_{\le}\ar[lu]_{\text{if }p\le2}^{\le}\ar[ld]^{\le}\ar[d]^{\le} &  & \lambda_k(\Delta_p)\ar[ld]^{\le}\ar[d]^{\le}\ar[ll]_{\le} &  & \lambda_k^+(\Delta_p)\ar[ld]^{\le}\ar[d]^{\le} \ar[ll]_{\le}&\\
 \frac{2^{p-1}}{p^p}(h_{s_k^-})^p & \frac{2^{p-1}}{p^p}(\hat{h}_k^-)^p & \frac{2^{p-1}}{p^p}h_{s_k}^p\ar@/^1.2pc/[ll]^{\le}   & \frac{2^{p-1}}{p^p}\hat{h}_k^p \ar@/_1.7pc/[ll]_{\le} &\frac{2^{p-1}}{p^p}(h_{s_k^+})^p\ar@/^1.2pc/[ll]^{\le}  & \frac{2^{p-1}}{p^p}(\hat{h}_k^+)^p\ar@/_1.7pc/[ll]_{\le} &}\]
 where $C>0$ and $C_p>0$ are universal constants, 
and  the quantity at the end of each arrow is greater than or equal to the quantity at the corresponding arrowhead, 
 i.e., $a\longleftarrow b$ means $a\le b$. In the above diagram,  $$s_k:=\max\limits_{\vec x\in \bigcup\limits_{q\in[1,p]} \mathsf{S}_{k}(\Delta_q)} \mathfrak{S}(\vec x)
,\;\;s_k^-:=\max\limits_{\vec x\in \bigcup\limits_{q\in[1,p]} \mathsf{S}_{k}^-(\Delta_q)} \mathfrak{S}(\vec x)\text{ and }s_k^+:=\max\limits_{\vec x\in \bigcup\limits_{q\in[1,p]} \mathsf{S}_{k}^+(\Delta_q)} \mathfrak{S}(\vec x),$$ where $\mathfrak{S}(\vec  x)$ is the number of strong nodal domains of $\vec x$,  

$\mathsf{S}_{k}(\Delta_q):=\left\{\vec x\ne\vec0\left|\,\vec x\text{ is an eigenvector corresponding to some eigenvalue }\lambda\text{ with }\lambda\le\lambda_k(\Delta_q)\right.\right\}$,  

$\mathsf{S}_{k}^\pm(\Delta_q):=\left\{\vec x\ne\vec0\left|\,\vec x\text{ is an eigenvector corresponding to some eigenvalue }\lambda\text{ with }\lambda\le\lambda_k^\pm(\Delta_q)\right.\right\}$.
\end{theorem}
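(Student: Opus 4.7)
The plan is to prove the diagram in three layers: the upper bounds from the top row to the middle row, the index-comparison inequalities among the three min-max columns and between $\hat h_k^\bullet$ and $h_k$, and the Cheeger-type lower bounds from the middle row to the bottom row, with the last layer reduced to the $p=1$ case and transported to all $p\ge 1$ by Theorem \ref{thm:most}. Throughout I write $\bullet$ as a placeholder for the empty superscript, $-$, or $+$.

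For the upper bounds $\lambda_k^\bullet(\Delta_p)\le 2^{p-1}\hat h_k^\bullet$, I would plug the test vectors $\vec{1}_A-\vec{1}_B$ into $F_p$. Splitting edges according to whether they lie inside $A$ or $B$, cross from $A\cup B$ to its complement, or connect $A$ and $B$, a direct count gives
\[
F_p(\vec{1}_A-\vec{1}_B)\;\le\;2^{p-1}\,\frac{|\partial A|+|\partial B|}{\vol(A\cup B)},
\]
with equality when no edges escape $A\cup B$ (this uses $p\ge 1$ so that $2^{p-1}\ge 1$). Given a family $\mathcal A\in\mathcal S_k^\bullet$ realizing $\hat h_k^\bullet$, its induced subcomplex of $K_n$ has the required index $\ge k$, and a point of a maximal simplex parametrized by a chain $(A_1,B_1)\prec\cdots\prec(A_m,B_m)$ admits a $p$-homogeneous edge-wise estimate bounding $F_p$ by the maximum of the vertex values. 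Inserting this subcomplex into \eqref{eq:min-max-Yang} produces the upper bound, and $\hat h_k^\bullet\le h_k$ follows by taking $\mathcal A=\{(A_i,\varnothing),(\varnothing,A_i)\}_{i=1}^k$ for a disjoint partition achieving $h_k$. The middle-row inequalities $\lambda_k^-\le\lambda_k\le\lambda_k^+$ and $\hat h_k^-\le\hat h_k\le\hat h_k^+$ are immediate from $\gamma^-\le\gamma\le\gamma^+$ and the set inclusions $\mathcal S_k^+\subset\mathcal S_k\subset\mathcal S_k^-$.

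For the Cheeger-type lower bounds $\lambda_k^\bullet(\Delta_p)\ge\frac{2^{p-1}}{p^p}(\hat h_k^\bullet)^p$, the key is first to establish the $p=1$ identity $\lambda_k^\bullet(\Delta_1)=\hat h_k^\bullet$. Since $F_1$ is $1$-homogeneous and piecewise linear on $|K_n|$ (after radial rescaling to the $l^\infty$-sphere), its maximum over any simplex is attained at a vertex; combined with an index-preserving deformation of any centrally symmetric sublevel set of $F_1$ onto the subcomplex induced by the vertices lying inside it, this identifies the min-max value in \eqref{eq:min-max-Yang} at $p=1$ with the combinatorial infimum defining $\hat h_k^\bullet$. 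Theorem \ref{thm:most} now applies: the monotonicity of $p\mapsto p(2\lambda_k^\bullet(\Delta_p))^{1/p}$ on $[1,\infty)$ yields $p(2\lambda_k^\bullet(\Delta_p))^{1/p}\ge 2\lambda_k^\bullet(\Delta_1)=2\hat h_k^\bullet$, and rearranging gives $\lambda_k^\bullet(\Delta_p)\ge\frac{2^{p-1}}{p^p}(\hat h_k^\bullet)^p$.

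For the refined bottom-row bounds involving $h_{s_k^\bullet}$ and the Miclo/LGT-type estimates on $\lambda_k^-$ on the left, I would argue by nodal-domain analysis. Any eigenvector $\vec x\in\mathsf S_k^\bullet(\Delta_q)$ has at most $s_k^\bullet$ strong nodal domains $A_1,\ldots,A_s$; restricting $\vec x$ to each $A_j$ and applying a local Poincar\'e-type estimate produces an $s$-way partition whose Cheeger ratio is controlled by $\lambda_k^\bullet(\Delta_q)$, and optimizing over $q\in[1,p]$ yields $\frac{2^{p-1}}{p^p}h_{s_k^\bullet}^p\le\lambda_k^\bullet(\Delta_p)$. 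The Miclo/LGT bounds $\frac{h_k^2}{Ck^4}$ and $\frac{h_k^p}{C_pk^{2p}}$ on $\lambda_k^-(\Delta_p)$ are obtained by combining the known $p=2$ case of \cite{Miclo08,LGT12} with Theorem \ref{thm:most}: for $p\ge 2$ the monotonicity of $p\mapsto p(2\lambda_k^-)^{1/p}$ transports the $p=2$ estimate upward, while for $p\le 2$ the decreasing direction $p\mapsto 2^{-p}\lambda_k^-$ is used. The main obstacle will be the rigorous proof of $\lambda_k^\bullet(\Delta_1)=\hat h_k^\bullet$, because the min-max in \eqref{eq:min-max-Yang} ranges over all centrally symmetric compact subsets of $|K_n|$ whereas $\hat h_k^\bullet$ ranges only over vertex-induced subcomplexes; bridging the two requires careful Clarke-subdifferential analysis of critical points of $F_1$ on the $l^\infty$-sphere, where the simplicial-complex framework developed for Theorem \ref{thm:tri-link} and Section \ref{sec:1-lap-simplicial-complex} supplies the necessary retraction and index-preservation statements.
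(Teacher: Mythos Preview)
Your overall architecture matches the paper's: reduce both Cheeger-type bounds to the identity $\lambda_k^\bullet(\Delta_1)=\hat h_k^\bullet$ (Claim~\ref{claim:comb-topo-min-max-1lap}), transport via Theorem~\ref{thm:most}, and handle the $h_{s_k^\bullet}$ and LGT pieces by nodal-domain estimates and the $p=2$ multi-way Cheeger inequality. The lower-bound half and the leftmost column are essentially done as in the paper.

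The genuine gap is in your \emph{upper} bound. You try to insert the subcomplex of $K_n$ directly as a test set for $F_p$ and claim a ``$p$-homogeneous edge-wise estimate'' showing that $F_p$ on each simplex is dominated by its maximum at the vertices. For $p>1$, $F_p$ is \emph{not} affine on the simplices of $|K_n|$ (or of $\widetilde{|K_n|}$), and you have not supplied any quasi-convexity argument; this step is neither obvious nor needed. The paper instead uses the \emph{other} monotonicity in Theorem~\ref{thm:most}: since $p\mapsto 2^{-p}\lambda_k^\bullet(\Delta_p)$ is decreasing, $\lambda_k^\bullet(\Delta_p)\le 2^{p-1}\lambda_k^\bullet(\Delta_1)=2^{p-1}\hat h_k^\bullet$, and then $\hat h_k^\bullet\le h_k$ follows from Claim~\ref{claim:CSZ}. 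This is exactly symmetric to what you already do for the lower bound with the increasing function $p\mapsto p(2\lambda_k^\bullet)^{1/p}$, so the fix is immediate once you notice it.

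Two smaller points. First, you wrote $\gamma^-\le\gamma\le\gamma^+$; the correct order is $\gamma^+\le\gamma\le\gamma^-$ (see the discussion before \eqref{eq:inequality-Yang+}). Your conclusions $\lambda_k^-\le\lambda_k\le\lambda_k^+$ and $\mathcal S_k^+\subset\mathcal S_k\subset\mathcal S_k^-$ are still correct, but the stated reason is inverted. Second, for the $h_{s_k^\bullet}$ bound your phrase ``optimizing over $q\in[1,p]$'' hides a nontrivial step: the nodal-domain estimate at level $q$ (Claims~\ref{claim:CSZ} and \ref{claim:THp>1}) gives $\frac{2^{q-1}}{q^q}h_m^q\le\lambda_k^\bullet(\Delta_q)$, and one must then invoke the monotonicity $p(2\lambda_k^\bullet(\Delta_p))^{1/p}\ge q(2\lambda_k^\bullet(\Delta_q))^{1/q}\ge 2h_m$ to transport this to $\frac{2^{p-1}}{p^p}h_m^p\le\lambda_k^\bullet(\Delta_p)$, exactly as in \eqref{eq:lower-bound-q-Lap}. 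Make that transport explicit.
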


\begin{remark}
By the above theorem, for any $k$,  the quantities on the left-hand-side and the right-hand-side of the higher order Cheeger-type inequality
\begin{equation}\label{eq:strong-Cheeg2}
\swarrow\frac{1}{2^{p-1}}\lambda_k^\pm(\Delta_p)\le \hat{h}_k^\pm\le \frac p2 (2\lambda_k^\pm(\Delta_p))^{\frac1p}\nearrow
\end{equation}
are decreasing and increasing with respect to $p\in[1,+\infty)$, respectively. 
\end{remark}

Taking $p=2$ and $k=2$ in Theorem  \ref{thm:p-lap-C}, we get the usual Cheeger inequality on graphs, while taking $p=1$ and $k=2$, we recover the identity  $\hat{h}_2^-=\lambda_2^-(\Delta_1)$ proved by Hein-B\"{u}hler \cite{HeinBuhler2010} and Chang \cite{Chang} independently. More importantly, taking  $p=1$ and $k\ge 1$, we indeed establish  the equality  $\hat{h}_k^\pm=\lambda_k^\pm(\Delta_1)$  which means that the $k$-th min-max eigenvalue of graph 1-Laplacian is actually the $k$-th  modified Cheeger constant (a combinatorial quantity). In particular, note that our
findings in combination with the recent work \cite{DFT21} show that $\hat{h}_k^-=\hat{h}_k=\hat{h}_k^+=h_k$ holds on a tree. In addition, Theorem \ref{thm:p-lap-C} 
and  \eqref{eq:strong-Cheeg2}  refine   Tudisco-Hein's higher order Cheeger inequality for $p$-Laplacian  \cite{TudiscoHein18}, and establish  the first $p$-Laplacian version of  Lee-Oveis Gharan-Trevisan's multi-way Cheeger inequality \cite{LGT12}. 
Theorem \ref{thm:p-lap-C} implies $\hat{h}_k\ge \frac{h_k^2}{Ck^4}$. We further conjecture that there exists a universal constant $C>0$ such that for any $k$,   $\hat{h}_k\ge \frac{h_k}{Ck^2}$. If such a conjecture has a positive answer, then by Theorem \ref{thm:most},  we can obtain a  
strict 
 refinement of the famous multi-way Cheeger inequality proved by Lee, Oveis Gharan and Trevisan \cite{LGT12}. 

 \begin{cor}\label{cor:=}
For any $k$, $\lambda^-_k(\Delta_2)=\lambda_k(\Delta_2)=\lambda^+_k(\Delta_2)$. 
For any  $p\ge 1$, 
$\lambda^-_1(\Delta_p)=\lambda_1(\Delta_p)=\lambda^+_1(\Delta_p)=0$, $\lambda^-_n(\Delta_p)=\lambda_n(\Delta_p)=\lambda^+_n(\Delta_p)=1$ and $\lambda^-_2(\Delta_p)=\lambda_2(\Delta_p)=\lambda^+_2(\Delta_p)$. 
Moreover, on forests, we further have 
\[\lambda_k^-(\Delta_1)=\lambda_k(\Delta_1)=\lambda_k^+(\Delta_1)=\hat{h}_k^-=\hat{h}_k=\hat{h}_k^+=h_k,\;\forall k=1,\cdots,n.\]
\end{cor}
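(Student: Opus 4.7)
The plan is to treat the three clusters of identities separately. Throughout, the chain $\lambda_k^-(\Delta_p)\le\lambda_k(\Delta_p)\le\lambda_k^+(\Delta_p)$ comes for free from the index comparison $\gamma^-(S)\le\gamma(S)\le\gamma^+(S)$ valid on every centrally symmetric compact $S\subset\R^n\setminus\{\vec 0\}$: a larger index shrinks the admissible class in the min--max formula and hence raises the infimum. So only the reverse inequalities need work.

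For the linear case $p=2$, I would exploit that $F_2$ is a genuine Rayleigh quotient. Given a $k$-dimensional subspace $W\subset\R^n$, the unit sphere $S_W$ is antipodally homeomorphic to $S^{k-1}$, whose three indices all equal $k$. Hence each of $\lambda_k^-(\Delta_2)$, $\lambda_k(\Delta_2)$, $\lambda_k^+(\Delta_2)$ is bounded above by $\inf_{\dim W=k}\max_{S_W}F_2$, which is the classical Courant--Fischer value of the $k$-th (normalized) Laplacian eigenvalue. Combined with the free inequality, this pins all three quantities to the classical eigenvalue.

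For the distinguished values of $k$: for $k=1$ take $S=\{\pm\vec 1/\|\vec 1\|\}$, which has all three indices equal to $1$ and carries $F_p\equiv 0$, so the three quantities collapse to $0$; for $k=n$ take $S=S^{n-1}$, whose three indices all equal $n$ (standard Borsuk--Ulam type computation) and on which $\sup F_p$ is the common value. For $k=2$, I would invoke the mountain-pass characterization of $\lambda_2(\Delta_p)$ established in \cite{HeinBuhler2010,TudiscoHein18}: it is realised as the infimum over antipodal loops of $\max F_p$, and since all three indices of $S^1$ are $2$, the three formulations yield the same number.

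For the forest case I would combine Theorem \ref{thm:p-lap-C} at $p=1$, which already delivers $\lambda_k^\pm(\Delta_1)=\hat h_k^\pm$ and $\lambda_k(\Delta_1)=\hat h_k$, with the theorem of Deidda, Putti and Tudisco \cite{DFT21} asserting that every $\Delta_p$-eigenvalue on a tree is variational. The latter forces $\lambda_k^-(\Delta_1)=\lambda_k^+(\Delta_1)$ on forests, hence $\hat h_k^-=\hat h_k^+$. The remaining identification with the standard $k$-way constant $h_k$ rests on the observation that every edge of a forest is a bridge, so an optimal disjoint set-pair family $(A_i,B_i)$ realising $\hat h_k^\pm$ can be completed into a genuine vertex partition $(A_i\cup B_i)$ without increasing the ratio $(|\partial A|+|\partial B|)/\vol(A\cup B)$, giving $\hat h_k^\pm = h_k$. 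The step I expect to be most delicate is this last forest reduction, since turning the Deidda--Putti--Tudisco structural theorem into explicit equality of the combinatorial constants requires verifying that the optimizers of $\hat h_k^\pm$ really come from honest partitions; the remaining ingredients ($p=2$ and $k\in\{1,2,n\}$) are essentially standard once the three indices are compared on the model spheres $S^{k-1}$.
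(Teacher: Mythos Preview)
Your treatment of $p=2$ and of $k\in\{1,2,n\}$ is essentially the paper's: Courant--Fischer plus the model spheres for the linear case, and the mountain-pass characterisation for $k=2$. One slip: the index comparison is $\gamma^+(S)\le\gamma(S)\le\gamma^-(S)$, not the reverse; the eigenvalue chain $\lambda_k^-\le\lambda_k\le\lambda_k^+$ you want is still correct, but your stated reasoning for it is inverted.

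The forest step has a genuine gap. You invoke the Deidda--Putti--Tudisco theorem in the form ``every $\Delta_p$-eigenvalue on a tree is variational'' and conclude $\lambda_k^-(\Delta_1)=\lambda_k^+(\Delta_1)$. That inference fails: knowing that every eigenvalue appears somewhere in the $\gamma^-$-list does not force the $\gamma^-$- and $\gamma^+$-lists to agree \emph{term by term} (multiplicities could be distributed differently). The paper instead uses the sharper consequence of \cite{DFT21}, namely $\lambda_k^-(\Delta_1)=h_k$ on forests (obtained from their nodal-domain count), and then sandwiches via the general upper bound $\lambda_k^+(\Delta_1)\le h_k$ from Claim~\ref{claim:CSZ}. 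This immediately gives $h_k=\lambda_k^-\le\lambda_k\le\lambda_k^+\le h_k$, and the identities $\hat h_k^\pm=\lambda_k^\pm(\Delta_1)$, $\hat h_k=\lambda_k(\Delta_1)$ from Claim~\ref{claim:comb-topo-min-max-1lap} finish the job. Your proposed ``bridge'' argument also mischaracterises $\hat h_k^\pm$: the optimizer is a family $\mathcal A\subset\mathcal P_2(V)$ whose induced subcomplex has index $\ge k$, not a $k$-tuple of disjoint set-pairs, so there is no obvious ``completion to a partition'' to perform.
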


Furthermore, Theorem \ref{thm:most} allows to bound some min-max  eigenvalues in a novel way: 
\begin{theorem}\label{thm:combina-estimate}
For any connected graph, there is a min-max eigenvalue of $\Delta_p$  in  
$$\begin{cases}
(2^{p-3},2^{p-1}(\frac{\sqrt{3}}{p})^p),&\text{ if }p< 2,\\
[\frac12,\frac32],&\text{ if }p=2,\\
(\frac{2^{p-1}}{p^p},3\times 2^{p-3}),&\text{ if }p> 2.
\end{cases}$$

For any  graph $G$, there are at least $\alpha_*(G)$ min-max eigenvalues of $\Delta_p$ larger than 
$\frac{2^{p-1}}{p^p}$ when $p>1$, and there are at least  $\alpha_*(G)$ min-max eigenvalues of $\Delta_1$ equal to $1$,  
where $\alpha_*(G)$ is the pseudo-independence  number introduced in \cite{Zhang18} (see also  Corollary \ref{cor:independence-number} in  Section \ref{sec:min-max-1-Lap}  for details). 
\end{theorem}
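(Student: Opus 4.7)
The plan is to use Theorem \ref{thm:most} as a transport mechanism: I would anchor each assertion at a distinguished value of $p$ where the spectrum is accessible, and then push the resulting bound along the monotonicity of $p\mapsto p(2\lambda_k(\Delta_p))^{1/p}$ and $p\mapsto 2^{-p}\lambda_k(\Delta_p)$. The natural anchor for the interval bound is $p=2$, where by Corollary \ref{cor:=} the three sequences of min-max eigenvalues collapse into the spectrum of the random-walk Laplacian $D^{-1}L$; the natural anchor for the pseudo-independence count is $p=1$, where the value $1$ is combinatorially produced by near-independent sets.

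For the interval bound I would first show that some min-max eigenvalue $\lambda_{k_0}(\Delta_2)$ lies in $[1/2,3/2]$. The eigenvalues of $D^{-1}L$ lie in $[0,2]$ with $\sum_k \lambda_k(\Delta_2)=\mathrm{tr}(D^{-1}L)=n$ and $\lambda_1(\Delta_2)=0$ for connected $G$, so a pigeonhole on $\lambda_2,\dots,\lambda_n$ combined with the trace constraint precludes all of them from lying simultaneously in $[0,1/2)\cup(3/2,2]$, at least for $n\ge 3$ (the singular order $n=2$ is handled separately). Once $\lambda_{k_0}(\Delta_2)\in[1/2,3/2]$ is in hand, Theorem \ref{thm:most} applied to the $k_0$-th index yields, for any $q<2$,
\[
2^{q-3}\ \le\ \lambda_{k_0}(\Delta_q)\ \le\ 2^{q-1}\Bigl(\frac{\sqrt{3}}{q}\Bigr)^q,
\]
obtained by evaluating the decreasing function $p\mapsto 2^{-p}\lambda_{k_0}(\Delta_p)$ and the increasing function $p\mapsto p(2\lambda_{k_0}(\Delta_p))^{1/p}$ at $p=2$; symmetrically, for any $q>2$ one obtains $2^{q-1}/q^q\le \lambda_{k_0}(\Delta_q)\le 3\cdot 2^{q-3}$, which together reproduce exactly the three-case interval claimed.

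For the pseudo-independence count I would anchor at $p=1$ and invoke Corollary \ref{cor:independence-number} in Section \ref{sec:min-max-1-Lap}, which supplies at least $\alpha_*(G)$ min-max eigenvalues of $\Delta_1$ equal to $1$, produced by characteristic vectors $\vec 1_A$ of near-independent sets $A\subset V$ with $F_1(\vec 1_A)=|\partial A|/\vol(A)=1$. For each such index $k$, the global monotonicity in Theorem \ref{thm:most} gives
\[
p\bigl(2\lambda_k(\Delta_p)\bigr)^{1/p}\ \ge\ 1\cdot(2\cdot 1)^{1/1}\ =\ 2,
\]
hence $\lambda_k(\Delta_p)\ge 2^{p-1}/p^p$; since $\lambda_k(\Delta_1)=1\ne 0$, the strict form of the monotonicity upgrades this to a strict inequality for every $p>1$. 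The $p=1$ part of the assertion is a direct restatement of the $\alpha_*(G)$-many min-max 1-eigenvalues equal to $1$.

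The main obstacle is the anchor at $p=2$: a bare pigeonhole on the trace does not by itself exclude spectra that cluster at the extremes $\{0,2\}$, so the argument must be supplemented either by an ad hoc check at very small orders, or more robustly by the Cheeger-type pinch $\lambda_{k_0}(\Delta_2)/2\le \hat{h}_{k_0}\le (2\lambda_{k_0}(\Delta_2))^{1/2}$ read off from Theorem \ref{thm:p-lap-C}, which traps some $\lambda_{k_0}(\Delta_2)$ inside $[1/2,3/2]$ by first trapping the modified Cheeger constant $\hat{h}_{k_0}$. Once this anchor is secured, the remainder is a mechanical extraction from the monotonicity of Theorem \ref{thm:most}, and the $p=1$ anchor in the second assertion is supplied by the combinatorial machinery around $\alpha_*(G)$.
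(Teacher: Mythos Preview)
Your overall architecture---anchor at $p=2$ for the interval bound, anchor at $p=1$ for the $\alpha_*(G)$ count, and transport along the two monotone functions of Theorem~\ref{thm:most}---is exactly the paper's strategy, and your computations for the transport step and for the $\alpha_*(G)$ part (via Corollary~\ref{cor:independence-number} and the strict monotonicity) are correct and match the paper.

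The genuine gap is precisely where you flag it: the anchor $\lambda_{k_0}(\Delta_2)\in[1/2,3/2]$. Neither of your proposed fixes works. The trace argument fails already for $n=4$: one can have $a=1$ eigenvalue in $[0,1/2)$ and $b=2$ eigenvalues in $(3/2,2]$ with total sum $4$, so pigeonhole on $\sum_k\lambda_k=n$ does not force any eigenvalue into $[1/2,3/2]$, and no finite ad hoc check fixes this for all $n$. The Cheeger-type pinch from Theorem~\ref{thm:p-lap-C} reads $\tfrac12\hat{h}_k^2\le\lambda_k(\Delta_2)\le 2\hat{h}_k$; to force $\lambda_{k_0}(\Delta_2)\in[1/2,3/2]$ you would need $\hat{h}_{k_0}\ge 1$ (for the lower bound) and simultaneously $\hat{h}_{k_0}\le 3/4$ (for the upper bound), which is impossible, and no combination of different indices rescues this.

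The paper does \emph{not} prove the anchor internally: it invokes it as Claim~\ref{thm:JMZ}, which is the main result of the separate paper~\cite{JMZ21} (see also the Remark immediately following the theorem statement). That result, $\min_k|\lambda_k(\Delta_2)-1|\le 1/2$ for connected graphs on $n\ge 3$ vertices, is nontrivial and is simply cited as a black box. Once you replace your anchor argument by this citation, your proof coincides with the paper's.
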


\begin{remark} The first statement in Theorem \ref{thm:combina-estimate} is nontrivial when $p>\sqrt{3}$, and  the case of $p=2$ is the main result established in the author's recent work \cite{JMZ21}.\end{remark}

\begin{remark}
The inequality 
$\lambda_{n-\alpha_*(G)+1}(\Delta_p)>\frac{2^{p-1}}{p^p}$ for $p>1$ in Theorem \ref{thm:combina-estimate} can be compared with the inertia bound for $p$-Laplacian \cite{JZ21-PM,Cvetkovic71}, i.e.,  $\lambda_{n-\alpha(G)+1}(\Delta_p)\ge1$, where $\alpha(G)$ is the standard  independence number of $G$. In some cases, it is better than the inertia bound. For example, considering a triangle graph whose Laplacian eigenvalues are $0,\frac32,\frac32$, by the inertia  bound for $p=2$, there is an  eigenvalue larger than or equal to $1$ which is not optimal, while by Theorem  \ref{thm:combina-estimate}, there are two eigenvalues larger than 
$\frac12$ 
which is sharp. 
\end{remark}


Other  new  results that we would like to  highlight in this paper are:
\begin{itemize}
\item We show that the  graph 1-Laplacian is zonotope-valued, which reveals that  
the graph 1-Laplacian is closely related to combinatorial geometry. See Section \ref{sec:usc-p-lap} for details. 
\item We derive that for any graph of order larger than or equal to $4$, there exists one number $c\in (\lambda_2,\lambda_n)$ which is not an eigenvalue of $\Delta_p$ for any $p$ sufficiently close to 1. And for particular graphs like trees  and complete graphs  (see Section \ref{sec:complete-graph}), for any $p\ge 1$, there always exists a number in $(\lambda_2,\lambda_n)$  that is not
in the list of the $p$-Laplacian eigenvalues.  This partially answers 
the question: is there a number between the second and the largest $\Delta_p$-eigenvalues that is not an eigenvalue? For the  
$p$-Laplacian on a domain, the question asks whether there exists one number $c>\lambda_2$ that is not an eigenvalue \cite{DLK13}. Our method  is expected to solve this  open problem in the domain setting.
\item We obtain a characterization for the homological eigenvalues of $\Delta_1$ (see Theorem \ref{thm:tri-link}), which is  inspired by the  discrete Morse theory on simplicial complexes \cite{Forman98} and the PL Morse theory on   triangulated  manifolds \cite{Brehm-Kuhnel,EH10}. We also note that there are infinitely many graphs whose 1-Laplacian eigenvalues are more 
than their orders. For details, see Section \ref{section:1Lapl}.
\item We offer new perspectives that $\Delta_1$ is a combinatorial operator because it encodes so many combinatorial properties involving graphs, while $\Delta_p$ induces a nonlinear evolution from the linear operator $\Delta_2$ to the combinatorial operator $\Delta_1$, and this 
nonlinear evolution  
essentially  implies  
Cheeger-type  inequalities (see Remarks \ref{rem:Cheeger-p-1} and \ref{rem:p-evolution} for details). 
\end{itemize}







The general theory on the eigenvalues of  graph $p$-Laplacians  that we are exploring can be compactly represented in the following diagram:

\tikzstyle{startstop} = [rectangle, rounded corners, minimum width=1cm, minimum height=1cm,text centered, draw=black, fill=red!0]
\tikzstyle{io1} = [rectangle, trapezium left angle=80, trapezium right angle=100, minimum width=1cm, minimum height=1cm, text centered, draw=black, fill=blue!0]
\tikzstyle{io2} = [trapezium,  rounded corners, trapezium left angle=100, trapezium right angle=100, minimum width=1cm, minimum height=1cm, text centered, draw=black, fill=yellow!0]
\tikzstyle{process} = [rectangle, minimum width=1cm, minimum height=1cm, text centered, draw=black, fill=orange!0]
\tikzstyle{decision} = [circle, minimum width=1cm, minimum height=1cm, text centered, draw=black, fill=green!0]
\tikzstyle{decision2} = [ellipse, rounded corners=10mm, minimum width=2cm, minimum height=2cm, text centered, draw=black, fill=green!0]
\tikzstyle{arrow} = [thick,->,>=stealth]

\begin{center}{\small
\begin{tikzpicture}[node distance=6.9cm]
\node (1) [startstop] {   \begin{tabular}{c}
    continuity and  monotonicity on  $\Delta_p$-spectra \\ \hline\hline
 Lipschitz continuity of  $\lambda_k(\Delta_p)$ \\
  monotonicity of $p(2\lambda_k(\Delta_p))^{\frac1p}$ and $2^{-p}\lambda_k(\Delta_p)$ \\   \textbf{stability of homological eigenvalues} \\  
    \end{tabular} };

\node (3) [startstop, right of=1, xshift=2.4cm, yshift=0cm] {  
\begin{tabular}{c}
   characterizations of $\Delta_1$-spectrum  \\ \hline\hline combinatorial properties on  $\lambda_k(\Delta_1)$ \\
   {homological eigenvalues of $\Delta_1$}  \\  
    \end{tabular}
};
\node (2) [startstop][startstop, above of=3, xshift=0cm, yshift=-4.5cm] {  non-variational eigenvalues of $\Delta_p$
};
\node (45) [startstop][startstop, below of=3, xshift=0cm, yshift=4.5cm] {
\begin{tabular}{c}
refined multi-way Cheeger inequalities for $\Delta_p$ 
\\
distribution of $\Delta_p$-eigenvalues
\end{tabular}};
\node (6) [startstop][startstop, below of=1, xshift=0cm, yshift=4.5cm] {  \begin{tabular}{c}
  multi-way Cheeger inequality (Lee-Oveis Gharan-Trevisan)
  \\ 
higher order Cheeger inequality for $\Delta_p$ (Tudisco-Hein) 
\\
$\Delta_2$-spectral gap from 1   (Jost-Mulas-Zhang) 
\\ 
    \end{tabular}
};
\draw  [thick,->](1) to node[anchor=north] {  }(2);
\draw  [thick,->](3) to node[anchor=north] {  }(2);
\draw  [thick,->](6) to node[anchor=north] {  }(45);
\draw  [thick,->](1) to node[anchor=north] {  }(45);
\draw  [thick,->](3) to node[anchor=north] {  }(45);
\end{tikzpicture}
}
\end{center}

The organization of this paper is as follows. 
We present in Section \ref{sec:main-proof} auxiliary lemmas and  more relevant results  on continuity and monotonicity of $p$-Laplacian,  and  we  establish sharp estimates for variational eigenvalues in Section \ref{sec:eigen-estimate}, and we study  nonvariational eigenvalues for typical  graphs in Section \ref{sec:nonvariational}.  We refer to  Section \ref{sec:monotonicity} and Proposition \ref{pro:min-max-eigen} for the proof of Theorem \ref{thm:most}. We prove  Theorem \ref{thm:main} in  Section \ref{sec:verification}, and Theorem \ref{thm:tri-link} in Section \ref{sec:1-lap-simplicial-complex}. The proofs of Theorem  \ref{thm:p-lap-C}
 and Theorem  \ref{thm:combina-estimate} are established in  Section \ref{sec:Cheeger-in} and Section \ref{sec:distribution-gap}, respectively. 
 In order 
to make the paper accessible to experts in graph $p$-Laplacian theory as well as those in homology theory we include in the paper somewhat more than the usual amount of 
background material. 



\section{
The spectrum of  $\Delta_p$}\label{sec:main-proof}

First, we recall the definition of graph 1-Laplacian, which has been systematically  studied in  \cite{JZ21,HeinBuhler2010,Chang,CSZ17}. 
\begin{defn}[1-Laplacian for  graphs]\label{def:weighted-graph1-Lap}
Given  a simple, unweighted, undirected, finite graph 
$G=(V,E)$ with $V=\{1,\cdots,n\}$,  the 1-Laplacian $\Delta_1$ is a set-valued map  on $\R^n$  
 defined by
$$(\Delta_1\vec x)_i=\left\{\left.\sum_{j\in V:\{j,i\}\in E}z_{ij}\right|z_{ij}\in \mathrm{Sgn}(x_i-x_j),z_{ij}=-z_{ji}\right\},\;\; i\in V,$$
in which $$\mathrm{Sgn}(t):=\begin{cases}
 \{1\} & \text{if } t>0,\\
 [-1,1] & \text{if }t=0,\\
 \{-1\} & \text{if }t<0.
 \end{cases}$$
\end{defn}
The 1-Laplacian eigenvalue problem 
is to find $\lambda\in\R$ and $\vec x\ne\vec0$ such that
$$(\Delta_1\vec x)_i\cap \lambda \deg(i) \mathrm{Sgn}(x_i)\ne\varnothing,\;\;\forall i \in V,$$
i.e., 
there exist  $z_{ij}\in\mathrm{Sgn}(x_i-x_j)$ with $z_{ij}=-z_{ji}$, $\forall \{i,j\}\in E$,  such that 
\begin{align}
\sum_{j\in V:\{j,i\}\in E}z_{ij}\in \lambda \deg(i) \mathrm{Sgn}(x_i),\;\;\forall i\in V. \label{eq:brief-1-Lap-1}
\end{align}
It is known that the critical points and  critical values\footnote{The critical points and  critical values of the locally Lipschitz function $F_1$ are defined by means of the Clarke subdifferential.}  of the 
Rayleigh quotient
  $$F_1(\vec x):=\frac{\sum_{\{i,j\}\in E} |x_i-x_j|}{\sum_{i\in V}\deg(i)|x_i|}$$
  are eigenvectors and eigenvalues of the graph 1-Laplacian,  respectively.

 Before recalling the variational eigenvalues for 1-Laplacians, we give the following
preliminary definition. 

\begin{defn}\label{def:genus}
For a centrally symmetric  compact set $S$ in $\mathbb{R}^n\setminus\{\vec0\}$, its \emph{Krasnoselskii  genus} is 
\begin{equation*}
\gamma^-(S) :=
\begin{cases}
\min\limits\{k\in\mathbb{Z}^+: \exists\; \text{odd continuous}\; \varphi: S\to \mathbb{S}^{k-1}\} & \text{if}\; S\ne\emptyset,\\
0 & \text{if}\; S=\emptyset.
\end{cases}
\end{equation*}
We let $\gamma^-(S)=0$ if $S$ is not centrally symmetric with respect to the origin  $\vec0$.
\end{defn}
The constants 
  $$ \lambda_k^-(\Delta_1):= \inf_{\gamma^-(S)\ge k}\sup\limits_{\vec x\in S}F_1(\vec x),\;\;k=1,\cdots,n,$$
  define a sequence of critical values of the Rayleigh quotient $F_1$, which are called the {\sl variational eigenvalues} of $\Delta_1$. 
  
The recent work \cite{DFT21} shows that  on a tree graph, the  variational eigenvalues coincide with the multi-way Cheeger constants exactly,  i.e., $\lambda_k^-(\Delta_1)=h_k$, $k=1,\cdots,n$.

\begin{defn}\label{def:genus+}
Given a centrally symmetric compact set $S$ in $\mathbb{R}^n\setminus\{\vec0\}$, define  
\begin{equation*}
\gamma^+(S) :=
\begin{cases}
\max\limits\{k\in\mathbb{Z}^+: \exists\; \text{odd continuous}\; \varphi:\mathbb{S}^{k-1} \to S\} & \text{if}\; S\ne\emptyset,\\
0 & \text{if}\; S=\emptyset.
\end{cases}
\end{equation*}
Let $\gamma^+(S)=0$ when $S$ is not centrally symmetric with respect to the origin  $\vec0$.
\end{defn}

We shall give an brief introduction for the Yang index \cite{Yang}. First, let $-$ be the antipodal map from $\R^n$ to $\R^n$. 
For any centrally symmetric set $S\subset \R^n\setminus\{\vec0\}$ with respect to the origin  $\vec0$, $-$ is a  continuous involution without fixed point, and $-S=S$. 
Let $C_*(S)$ be the singular chain complex with $\mathbb{Z}_2$-coefficients, and denote by $-_\#$ the chain map of $C_*(S)$ induced
by the antipodal map $-$. We say that a $q$-chain $c$ is symmetric if $-_\#(c)=c$. 
The symmetric $q$-chains form a subgroup $C_q(S,-)$ of $C_q(S)$,  and
the boundary operator $\partial_q$ maps  $C_q(S,-)$ to  $C_{q-1}(S,-)$. Then, these
subgroups form a subcomplex $C_*(S,-)$, and we can define the corresponding cycles $Z_q(S,-)$, boundaries $B_q(S,-)$, and homology groups $H_q(S,-)$, respectively. 
Let $\nu:Z_q(S,-)\to\mathbb{Z}_2$ be homomorphisms inductively defined by
\[\nu(z)=\begin{cases}
\mathrm{In}(c),&\text{ if }q=0,
\\
\nu(\partial_q z),&\text{ if }q\ge1
\end{cases}
\]
if
$z=-_\#(c)+c$, where the index of a 0-chain $c=\sum n_i\sigma_i$ is  defined by $\mathrm{In}(c):=\sum n_i$. It is known that $\nu$ is well-defined and $\nu B_q(S,-)=0$, and thus it induces the index homomorphism $\nu_*:H_q(X,-)\to\mathbb{Z}_2$ by $\nu_*([z])=\nu(z)$ (see \cite{Yang}).

\begin{defn}\label{def:Yang}
The Yang index of a centrally symmetric compact set $S$ in $\mathbb{R}^n\setminus\{\vec0\}$ is defined as   
\begin{equation*}
\gamma(S) :=
\begin{cases}
\min\limits\{k\in\mathbb{Z}^+: \nu_*H_k(S,-)=0\} & \text{if}\; S\ne\emptyset,\\
0 & \text{if}\; S=\emptyset.
\end{cases}
\end{equation*}
And we take $\gamma(S)=0$ when $S$ is not centrally symmetric with respect to the origin  $\vec0$.
\end{defn}

By using $\gamma^+$ and  $\gamma$ instead of  $\gamma^-$ in the definition of the variational eigenvalue $\lambda_k^-(\Delta_1)$, we can define $\lambda_k^+(\Delta_1)$ and $\lambda_k(\Delta_1)$, respectively. 
It is actually known that for any symmetric set $S$, $\gamma^+(S)\le\gamma(S)\le \gamma^-(S)$, and thus for any $p\ge1$, 
\begin{equation}\label{eq:inequality-Yang+}
  \lambda_k^-(\Delta_p) \le\lambda_k(\Delta_p)  \le\lambda_k^+(\Delta_p),\;k=1,\cdots,n. 
\end{equation}
  
For the sake of completeness, we give a detailed proof of $\gamma^+(S)\le\gamma(S)\le \gamma^-(S)$ below.
\begin{proof}
For a given symmetric set $S$, if there exist odd continuous maps $\varphi_+:\mathbb{S}^{k^+-1}\to S$ and $\varphi_-:S\to \mathbb{S}^{k^--1}$, then Proposition 2.4 in \cite{PereraAgarwalO'Regan} implies $k^+=\gamma^+(\mathbb{S}^{k^+-1})\le \gamma(S)\le \gamma^+(\mathbb{S}^{k^--1}) =k^-$. According to Example 3.4 in \cite{PereraAgarwalO'Regan}, one has $\gamma^+(\mathbb{S}^{k^+-1})=k^+$ and $\gamma^-(\mathbb{S}^{k^--1})=k^-$. Thus, we obtain $k^+\le \gamma(S)\le k^-$. By the definition of $\gamma^+$ and $\gamma^-$, there holds $\gamma^+(S)\le \gamma(S)\le \gamma^-(S)$. 
\end{proof}

\subsection{
Upper semi-continuity of the spectra of $p$-Laplacians when $p$ varies}
 \label{sec:usc-p-lap}
 In this subsection, we mainly prove 
the upper semi-continuity for  the set of  eigenvalues of $\Delta_p$, namely:
\begin{lemma}\label{lemma:p-lap-usc}
Given a graph, denote by $\mathrm{spec}(\Delta_p)$ the set of all the eigenvalues of  $\Delta_p$. Then the set-valued map $p\mapsto \mathrm{spec}(\Delta_p)$ is upper semi-continuous on $[1,+\infty)$, i.e., for any $p\ge 1$ and  for any $\epsilon>0$, there exists $\delta>0$ such that for any $p'\in (p-\delta,p+\delta)$  with $p'\ge 1$, 
$$\mathrm{spec}(\Delta_{p'})\subset \bigcup\limits_{\lambda\in \mathrm{spec}(\Delta_p)}(\lambda-\epsilon,\lambda+\epsilon).$$
\end{lemma}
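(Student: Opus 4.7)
The plan is to argue by contradiction via a compactness extraction. Suppose upper semi-continuity fails at some $p\ge 1$, so there exist $p_k\to p$ in $[1,+\infty)$, eigenvalues $\lambda_k\in\mathrm{spec}(\Delta_{p_k})$, and $\epsilon_0>0$ with $|\lambda_k-\mu|\ge \epsilon_0$ for every $\mu\in\mathrm{spec}(\Delta_p)$. Since $\lambda_k\in[0,2^{p_k-1}]$ is uniformly bounded, after passing to a subsequence I may assume $\lambda_k\to\lambda^*$, and it suffices to prove $\lambda^*\in\mathrm{spec}(\Delta_p)$. Note that $p_k=p$ would force $\lambda_k\in\mathrm{spec}(\Delta_p)$, contradicting the separation; so in the case $p=1$ I may assume $p_k>1$ throughout, while in the case $p>1$ the inequality $p_k>1$ holds eventually. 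I discard the finitely many indices where $p_k=1$.

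For each such $k$ fix an eigenvector $\vec x_k\ne\vec 0$ of $\Delta_{p_k}$ at $\lambda_k$, normalized by $\|\vec x_k\|_\infty=1$ using the $(p_k-1)$-homogeneity of the eigenvalue equation. Finite-dimensional compactness then yields a subsequence with $\vec x_k\to\vec x^*$ and $\|\vec x^*\|_\infty=1$, so $\vec x^*\ne\vec 0$. Introduce the multipliers
\[
z_{ij}^k := |x_{k,i}-x_{k,j}|^{p_k-2}(x_{k,i}-x_{k,j}), \qquad w_i^k := |x_{k,i}|^{p_k-2}x_{k,i},
\]
which satisfy $z_{ij}^k=-z_{ji}^k$, $|z_{ij}^k|\le 2^{p_k-1}$ and $|w_i^k|\le 1$. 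A further diagonal extraction gives $z_{ij}^k\to z_{ij}^*$ and $w_i^k\to w_i^*$ for every edge and vertex, and passing the identity $\sum_{j:\{i,j\}\in E}z_{ij}^k=\lambda_k\deg(i)\, w_i^k$ to the limit yields $\sum_{j:\{i,j\}\in E}z_{ij}^*=\lambda^*\deg(i)\, w_i^*$.

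It remains to verify that the limiting multipliers lie in the correct (possibly set-valued) selection. For $p>1$, the function $(t,q)\mapsto |t|^{q-2}t$ is jointly continuous on $\mathbb{R}\times(1,+\infty)$ (with value $0$ at $t=0$), whence $z_{ij}^*=|x_i^*-x_j^*|^{p-2}(x_i^*-x_j^*)$ and $w_i^*=|x_i^*|^{p-2}x_i^*$, and $\vec x^*$ is a genuine $\Delta_p$-eigenvector at $\lambda^*$. For $p=1$ I dichotomise: at edges with $x_i^*\ne x_j^*$, continuity forces $z_{ij}^*=\sgn(x_i^*-x_j^*)\in\mathrm{Sgn}(x_i^*-x_j^*)$, while at edges with $x_i^*=x_j^*$ the uniform bound $|z_{ij}^*|\le \lim 2^{p_k-1}=1$ places $z_{ij}^*$ in $[-1,1]=\mathrm{Sgn}(0)$; an identical dichotomy handles $w_i^*\in\mathrm{Sgn}(x_i^*)$. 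Combined with antisymmetry, this shows $\vec x^*$ satisfies the $\Delta_1$-eigenvalue inclusion \eqref{eq:brief-1-Lap-1} at $\lambda^*$, hence $\lambda^*\in\mathrm{spec}(\Delta_p)$ in either regime, the desired contradiction.

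The main obstacle is the passage $p_k\to 1^+$ when $p=1$: there the target equation is a differential inclusion rather than a smooth equation, and the auxiliary subsequence extraction for the multipliers $z_{ij}^k,w_i^k$ is indispensable, since they are not uniquely determined by $\vec x^*$ in the limit. The $\ell^\infty$-normalization is doubly important, producing both a nontrivial limit $\vec x^*\ne\vec 0$ and the uniform multiplier bounds used in the extraction; a different homogeneous normalization such as $\sum_i\deg(i)|x_{k,i}|^{p_k}=1$ would require additional estimates to rule out concentration of mass.
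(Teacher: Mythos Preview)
Your proof is correct and follows essentially the same compactness-and-limit-passage strategy as the paper. The paper organizes the argument through auxiliary propositions (showing $\Delta_1\vec x$ is the set-valued limit of $\Delta_p\hat{\vec x}$ as $p\to1^+$, $\hat{\vec x}\to\vec x$, and that the eigenspace map $(p,\lambda)\mapsto\hat S_\lambda(\Delta_p)$ is upper semi-continuous), whereas you carry out the extraction of eigenvectors and edge/vertex multipliers $z_{ij}^k,w_i^k$ directly in one pass; the underlying idea is identical.
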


For two points (or vectors) $\vec a$ and $\vec b$ in $\R^n$, we use $[\vec a,\vec b]$ to denote the segment with the  endpoints $\vec a$ and $\vec b$. A  \textbf{zonotope} is the Minkowski summation of finitely many segments. For convenience, we also regard a point (resp., a segment) as a 
zonotope of dimension $0$ (resp., dimension  $1$). 
\begin{pro}\label{pro:1-Laplacian-center-p-Lap}
The graph 1-Laplacian maps each vector to a zonotope in the following way:
$$\Delta_1\vec x=\lim\limits_{p\to 1^+}\Delta_p\vec x+ \sum_{\{i,j\}\in E:\,x_i=x_j}[\vec e_i-\vec e_j,\vec e_j-\vec e_i]
$$
 where  
  the addition `$+$' is  in the sense of Minkowski summation, $(\vec e_i)_{i=1}^n$ is the standard orthogonal base of $\R^n$. 
\end{pro}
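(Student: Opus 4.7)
The plan is to check the identity edge by edge. For each edge $\{i,j\}\in E$, the antisymmetry constraint $z_{ij}=-z_{ji}$ in Definition~\ref{def:weighted-graph1-Lap} couples the contribution to $(\Delta_1\vec x)_i$ with the contribution to $(\Delta_1\vec x)_j$: a choice of $z_{ij}\in \mathrm{Sgn}(x_i-x_j)$ adds $z_{ij}$ to coordinate $i$ and $-z_{ij}$ to coordinate $j$, and so contributes the vector $z_{ij}(\vec e_i-\vec e_j)$ to $\Delta_1\vec x$. Hence the coordinate-wise definition assembles into the Minkowski sum
\begin{equation*}
\Delta_1\vec x=\sum_{\{i,j\}\in E}\bigl\{z(\vec e_i-\vec e_j):z\in\mathrm{Sgn}(x_i-x_j)\bigr\},
\end{equation*}
and for $p>1$ the same bookkeeping rewrites
\begin{equation*}
\Delta_p\vec x=\sum_{\{i,j\}\in E}|x_i-x_j|^{p-1}\sgn(x_i-x_j)(\vec e_i-\vec e_j).
\end{equation*}

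Next I would analyse these edge-wise contributions in two regimes. If $x_i\ne x_j$, then $\mathrm{Sgn}(x_i-x_j)=\{\sgn(x_i-x_j)\}$ is a singleton, so the $1$-Laplacian contribution on that edge is the single vector $\sgn(x_i-x_j)(\vec e_i-\vec e_j)$; meanwhile $|x_i-x_j|^{p-1}\to 1$ as $p\to 1^+$, so the $p$-Laplacian edge-term converges to exactly the same vector. If instead $x_i=x_j$, then the $p$-Laplacian contribution is identically $\vec 0$ for every $p>1$, while $\mathrm{Sgn}(0)=[-1,1]$ produces the segment $[\vec e_j-\vec e_i,\vec e_i-\vec e_j]$ on the $1$-Laplacian side. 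Summing over all edges and splitting the $1$-Laplacian sum according to whether $x_i=x_j$ gives
\begin{equation*}
\Delta_1\vec x=\sum_{\{i,j\}\in E:\,x_i\ne x_j}\sgn(x_i-x_j)(\vec e_i-\vec e_j)\;+\sum_{\{i,j\}\in E:\,x_i=x_j}[\vec e_i-\vec e_j,\vec e_j-\vec e_i],
\end{equation*}
and the first sum is precisely $\lim_{p\to 1^+}\Delta_p\vec x$, yielding the claimed identity.

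The argument is essentially bookkeeping. The only conceptual step is verifying that the coordinate-wise definition of $\Delta_1$ really does unfold into a Minkowski sum of symmetric segments indexed by edges (rather than by ordered pairs), which is exactly where the antisymmetry $z_{ij}=-z_{ji}$ enters in an essential way. Once that reduction is in place, the pointwise limit $|x_i-x_j|^{p-1}\to 1$ on non-degenerate edges, and the vanishing of the $p$-Laplacian contribution on degenerate edges, finish the proof with no further analytical input.
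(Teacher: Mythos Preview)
Your proof is correct and follows essentially the same approach as the paper: both unfold the coordinate-wise definition of $\Delta_1\vec x$ into a Minkowski sum over edges using the antisymmetry $z_{ij}=-z_{ji}$, split edges into the cases $x_i\ne x_j$ and $x_i=x_j$, and identify the non-degenerate part with $\lim_{p\to1^+}\Delta_p\vec x$ via the elementary limit $|x_i-x_j|^{p-1}\to1$.
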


\begin{proof}Note that
\begin{align*}
\Delta_1\vec x&=\left\{\left.\sum_{i=1}^n\sum_{j\in V:\{j,i\}\in E}z_{ij}\vec e_i\right|z_{ij}\in \mathrm{Sgn}(x_i-x_j),z_{ij}=-z_{ji}\right\}
\\&=\left\{\left.\sum_{\{i,j\}\in E}z_{ij}(\vec e_i-\vec e_j)\right|z_{ij}\in \mathrm{Sgn}(x_i-x_j)\right\}
\\&=\left\{\left.\sum_{\{i,j\}\in E:x_i>x_j}(\vec e_i-\vec e_j)+\sum_{\{i,j\}\in E:x_i=x_j}z(\vec e_i-\vec e_j)\right|z\in [-1,1]\right\}
\\&=\sum_{\{i,j\}\in E:x_i>x_j}(\vec e_i-\vec e_j)+\sum_{\{i,j\}\in E:x_i=x_j}[\vec e_i-\vec e_j,\vec e_j-\vec e_i],
\end{align*}
where the addition `$+$' in the last  equality is  in the sense of Minkowski summation. 

We  note that $\lim\limits_{p\to 1^+}(\Delta_p\vec x)_i=\sum\limits_{j\in V:\{j,i\}\in E}\mathrm{sign}(x_i-x_j)$, and thus
 $$\lim\limits_{p\to 1^+}\Delta_p\vec x=\sum_{i=1}^n\sum_{j\in V:\{j,i\}\in E}\mathrm{sign}(x_i-x_j)\vec e_i=\sum_{\{i,j\}\in E:x_i>x_j}(\vec e_i-\vec e_j)
 ,$$
where 
$$\mathrm{sign}(t):=\begin{cases}
 1 & \text{if } t>0,\\
 0 & \text{if }t=0,\\
 -1 & \text{if }t<0,
 \end{cases}$$
indicates the standard sign function. The proof is completed. 
\end{proof}

It is known that the $1$-Laplacian can be regarded as the limit of the  $p$-Laplacians in some sense. 
Interestingly, we have the following exact description of the limit of $\Delta_p$ as $p$ tends to $1$. 
\begin{pro}\label{pro:p-to-1-converge}
For any $\vec x\in\R^n$, 
$$\Delta_1\vec x=\lim\limits_{\substack{\hat{\vec x}\to\vec x\\ p\to1^+}}\Delta_p\hat{\vec x}=\lim\limits_{\substack{\delta\to 0^+\\ p\to1^+}}\Delta_p(\mathbb{B}_\delta(\vec x)),$$
and it is interesting that 
$$\lim\limits_{p\to1^+}\lim\limits_{\hat{\vec x}\to\vec x}\Delta_p\hat{\vec x}=\lim\limits_{p\to1^+}\lim\limits_{\delta\to 0^+}\Delta_p(\mathbb{B}_\delta(\vec x))=\lim\limits_{p\to1^+}\Delta_p\vec x=\text{the center point of }\Delta_1\vec x,$$
$$\lim\limits_{\hat{\vec x}\to\vec x}\lim\limits_{p\to1^+}\Delta_p\hat{\vec x}=\lim\limits_{\delta\to 0^+}\lim\limits_{p\to1^+}\Delta_p(\mathbb{B}_\delta(\vec x))= \bigcup_{\hat{\vec x}\text{ near }\vec x} \lim\limits_{p\to1^+}\Delta_p\hat{\vec x}=\{\text{the centers of the faces of 
}\Delta_1\vec x\},$$
where  $\mathbb{B}_\delta(\vec x)$ is the  $\delta$-neighborhood
of $\vec x$, i.e., the $\delta$-ball centered at $\vec x$. 
The last expression `the centers of the faces of $\Delta_1\vec x$' depends on the fact that a convex  polytope is a zonotope if and only if every face is  centrally symmetric (see \cite{Coxeter62,Shephard67}). 


In addition, the limit points of the eigenvalues of $\Delta_p$ are  
eigenvalues of $\Delta_1$, as $p$ tends to $1$.
\end{pro}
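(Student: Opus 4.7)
The plan is to reduce everything to a coordinate-wise analysis of the scalar map $t\mapsto |t|^{p-1}\mathrm{sign}(t)$, and then assemble across edges via the explicit formulas established in the proof of Proposition~\ref{pro:1-Laplacian-center-p-Lap}. Fix $\vec x$ and partition the edge set as $E_{>}(\vec x)=\{\{i,j\}\in E:x_i\ne x_j\}$ and $E_{=}(\vec x)=\{\{i,j\}\in E:x_i=x_j\}$. For any $\hat{\vec x}$ sufficiently close to $\vec x$, $\mathrm{sign}(\hat x_i-\hat x_j)$ on $E_{>}(\vec x)$ is frozen to $\mathrm{sign}(x_i-x_j)$, whereas on $E_{=}(\vec x)$ it is free to take any value in $\{-1,0,+1\}$ depending on how the tie is broken. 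This trichotomy is the mechanism by which the family $\Delta_p\hat{\vec x}$ can sweep out the whole zonotope $\Delta_1\vec x$.

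For the first identity I would prove two inclusions. The ``$\supset$'' direction: given $\vec y\in\Delta_1\vec x$, use Proposition~\ref{pro:1-Laplacian-center-p-Lap} to write $\vec y=\lim_{p\to1^+}\Delta_p\vec x+\sum_{\{i,j\}\in E_{=}(\vec x)}t_{ij}(\vec e_i-\vec e_j)$ with $t_{ij}\in[-1,1]$ and $t_{ji}=-t_{ij}$; then for each $p$ close to $1$ build $\hat{\vec x}^{(p)}$ from $\vec x$ by a perturbation on each tied edge of size $s_{ij}^{(p)}\to 0^+$ chosen so that $(s_{ij}^{(p)})^{p-1}\to|t_{ij}|$ (e.g.\ $s_{ij}^{(p)}=|t_{ij}|^{1/(p-1)}$ for $|t_{ij}|<1$, and $s_{ij}^{(p)}=p-1$ for $|t_{ij}|=1$) with sign $\mathrm{sign}(t_{ij})$, giving $\Delta_p\hat{\vec x}^{(p)}\to \vec y$. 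The ``$\subset$'' direction: any cluster point $\vec y^{*}$ of $\Delta_p\hat{\vec x}$ along $\hat{\vec x}\to\vec x$, $p\to1^+$ has $i$-th coordinate of the form $\sum_{j:\{i,j\}\in E}z_{ij}$ with $z_{ij}=-z_{ji}$, $z_{ij}=\mathrm{sign}(x_i-x_j)$ on $E_{>}(\vec x)$ (since $|\hat x_i-\hat x_j|^{p-1}\to 1$ there), and $z_{ij}\in[-1,1]$ on $E_{=}(\vec x)$, so $\vec y^{*}\in\Delta_1\vec x$. The neighborhood version with $\mathbb{B}_\delta(\vec x)$ is proved identically.

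The iterated limits rest on the pointwise identity $\lim_{p\to1^+}\Delta_p\hat{\vec x}=\sum_{\{i,j\}\in E,\,\hat x_i\ne\hat x_j}\mathrm{sign}(\hat x_i-\hat x_j)(\vec e_i-\vec e_j)$, which is a single vector (no zonotope appears because the limit kills the $p>1$ magnitude). Setting $\hat{\vec x}=\vec x$ yields the centre $\vec c:=\lim_{p\to1^+}\Delta_p\vec x$ of the zonotope $\Delta_1\vec x$, which is the middle equality. Varying $\hat{\vec x}$ over a neighborhood of $\vec x$ keeps the $E_{>}(\vec x)$-contribution fixed while each tied edge independently contributes a vector in $\{+(\vec e_i-\vec e_j),\,-(\vec e_i-\vec e_j),\,\vec 0\}$ according to whether $\hat x_i>\hat x_j$, $\hat x_i<\hat x_j$, or $\hat x_i=\hat x_j$. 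Matching these three options with the three face-selectors $\epsilon_{ij}\in\{-1,+1,0\}$ for the segment $[\vec e_i-\vec e_j,\vec e_j-\vec e_i]$ in the Minkowski decomposition of $\Delta_1\vec x$ identifies the resulting set with the centres of all faces of the zonotope, giving the last equality.

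Finally, the claim about eigenvalues is immediate from the upper semi-continuity in Lemma~\ref{lemma:p-lap-usc}, which forces every accumulation point of $\Delta_p$-eigenvalues as $p\to 1^+$ to lie in $\mathrm{spec}(\Delta_1)$. The main obstacle throughout is the iterated-limit step: one has to verify precisely the correspondence between the three limiting values $\{-1,0,+1\}$ of $\mathrm{sign}(\hat x_i-\hat x_j)$ on each tied edge and the combinatorial faces of the Minkowski-sum zonotope. Keeping careful track of which tied edges are ``frozen'' in which direction versus those that remain tied is what distinguishes the joint limit (which sweeps out all of $\Delta_1\vec x$) from the iterated one (which only sweeps out the centres of its faces).
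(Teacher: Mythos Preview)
Your overall structure matches the paper's, but there are two genuine issues.

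First, your last paragraph invokes Lemma~\ref{lemma:p-lap-usc} to deduce that accumulation points of $\Delta_p$-eigenvalues lie in $\mathrm{spec}(\Delta_1)$. In the paper's logical order this is circular: Lemma~\ref{lemma:p-lap-usc} is proved via Proposition~\ref{pro:usc-eigenspace}, which in turn relies on (a slight extension of) the very eigenvalue-limit statement you are trying to establish here. The paper instead argues directly: take an eigenpair $(\lambda(\Delta_p),\vec x^p)$ with $\|\vec x^p\|$ normalized, pass to a convergent subsequence $\vec x^p\to\vec x$, and use the first identity $\Delta_1\vec x=\lim_{\hat{\vec x}\to\vec x,\,p\to1^+}\Delta_p\hat{\vec x}$ together with the analogous scalar limit for $\lambda|t|^{p-2}t$ to verify that $(\lambda,\vec x)$ satisfies the $\Delta_1$-eigenequation. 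You should do the same.

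Second, your ``$\supset$'' construction for the joint limit proposes to ``build $\hat{\vec x}^{(p)}$ from $\vec x$ by a perturbation on each tied edge of size $s_{ij}^{(p)}$'' chosen so that $(s_{ij}^{(p)})^{p-1}\to|t_{ij}|$. This implicitly treats the differences $\hat x_i-\hat x_j$ on tied edges as independently adjustable, but they are not: if $\{1,2\},\{2,3\},\{1,3\}\in E_{=}(\vec x)$, then $(\hat x_1-\hat x_2)+(\hat x_2-\hat x_3)=(\hat x_1-\hat x_3)$ forces a relation among the $s_{ij}^{(p)}$ that is incompatible with hitting an arbitrary triple $(t_{12},t_{23},t_{13})\in[-1,1]^3$. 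The statement is still true, but only because the map $(t_e)_e\mapsto\sum_e t_e(\vec e_i-\vec e_j)$ is many-to-one when the segments $\vec e_i-\vec e_j$ are linearly dependent, so every point of the zonotope admits \emph{some} realizable representation. The same independence claim reappears in your face-centre argument (``each tied edge independently contributes''); the correct observation, which the paper uses, is that for $\vec y$ near $\vec x$ the sign pattern $(\mathrm{sign}(y_i-y_j))_{\{i,j\}\in E_=(\vec x)}$ is exactly a face-selecting pattern for the zonotope (coming from the linear functional $\phi=\vec y-\vec x$), so $\Delta_1\vec y$ is a face of $\Delta_1\vec x$ and $\lim_{p\to1^+}\Delta_p\vec y$ is its centre. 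That identification is what replaces your independence heuristic.
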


\begin{proof}
We study the limit  
$\lim\limits_{p\to1^+}\Delta_p(\mathbb{B}_\delta(\vec x))$ when $\delta$ is sufficiently small. In fact, for any $\vec x$, taking $0<\delta<\frac12\min(\{|x_i-x_j|:\{i,j\}\in E\}\setminus\{0\})
$, then for any $\vec y\in \mathbb{B}_\delta(\vec x)$,  for $\{i,j\}\in E$,  $x_i>x_j$ implies $y_i>y_j$, and thus 
$\Delta_1\vec y$ is a face of $\Delta_1\vec x$.  
In consequence, by Proposition \ref{pro:1-Laplacian-center-p-Lap},  $\lim\limits_{p\to1^+}\Delta_p\vec y=\text{the center of }\Delta_1\vec y$, and therefore,  $\lim\limits_{p\to1^+}\Delta_p(\mathbb{B}_\delta(\vec x))=\text{the centers of the faces of 
}\Delta_1\vec x$.

By the definition of the $p$-Laplacian, for $p>1$,  $\Delta_p:\R^n\to\R^n
$ is a continuous map, and thus 
$\lim\limits_{\hat{\vec x}\to\vec x}\Delta_p\hat{\vec x}=\lim\limits_{\delta\to 0^+}\Delta_p(\mathbb{B}_\delta(\vec x))=\Delta_p\vec x$. Combining with  Proposition \ref{pro:1-Laplacian-center-p-Lap}, $\lim\limits_{p\to1^+}\lim\limits_{\hat{\vec x}\to\vec x}\Delta_p\hat{\vec x}=\lim\limits_{p\to1^+}\Delta_p\vec x=\text{the center point of }\Delta_1\vec x$. 

It is easy to check that the set of  limit points of the function $(t,p)\mapsto|t|^{p-2}t$ as $p\to1^+$ and $t\to 0$ is the closed interval $[-1,1]$, which we shall write as $\lim\limits_{t\to 0, p\to1^+}|t|^{p-2}t=[-1,1]$. 
Taking $t_{ij}=\hat{x}_i-\hat{x}_j=-t_{ij}$ and $E_x=\{\{i,j\}\in E:x_i=x_j\}$, we have 
$\Delta_p\hat{\vec x}=\sum_{i\in V}\sum_{j\in V:\{j,i\}\in E}|t_{ij}|^{p-2}t_{ij}\vec e_i=\sum_{\{i,j\}\in E}|t_{ij}|^{p-2}t_{ij}(\vec e_i-\vec e_j)$ and 
\begin{align*}
 \lim\limits_{\substack{\hat{\vec x}\to\vec x\\ p\to1^+}}\Delta_p\hat{\vec x}&=\lim\limits_{\substack{t_{ij}\to x_i-x_j\\ p\to1^+}}\sum_{\{i,j\}\in E\setminus E_x}|t_{ij}|^{p-2}t_{ij}(\vec e_i-\vec e_j)+\lim\limits_{\substack{t_{ij}\to 0\\ p\to1^+}}\sum_{\{i,j\}\in E_x}|t_{ij}|^{p-2}t_{ij}(\vec e_i-\vec e_j)
 \\&=\sum_{\{i,j\}\in E:x_i>x_j}(\vec e_i-\vec e_j)+\sum_{\{i,j\}\in E_x}[-1,1](\vec e_i-\vec e_j)
 \\&=\sum_{\{i,j\}\in E:x_i>x_j}(\vec e_i-\vec e_j)+\sum_{\{i,j\}\in E:x_i=x_j}[\vec e_i-\vec e_j,\vec e_j-\vec e_i]=\Delta_1\vec x
\end{align*}
where the last equality has been shown in the proof of Proposition \ref{pro:1-Laplacian-center-p-Lap}.

Let $(\lambda(\Delta_p),\vec x^p)$ be an eigenpair of $\Delta_p$ such that $\lambda(\Delta_p)\to\lambda$,  $\vec x^p\to \vec x\ne\vec0$, $p\to 1^+$.  
Then the limit points $\lim_{p\to1^+}\Delta_p\vec x^p$ are included in  $\Delta_1\vec x$. 
Similarly, the limit points  $\lim_{p\to1^+}\lambda(\Delta_p)|(\vec x^p)_i|^{p-2}(\vec x^p)_i$ are included in  $\lambda \mathrm{Sgn}(x_i)$. 

Thus, it follows from the eigenvalue equation $(\Delta_p\vec x^p)_i=\lambda(\Delta_p)\deg(i)|(\vec x^p)_i|^{p-2}(\vec x^p)_i$ and the compactness of $\Delta_1\vec x$ and $\mathrm{Sgn}(x_i)$ that $(\Delta_1\vec x)_i\cap \lambda \deg(i) \mathrm{Sgn}(x_i)\ne\varnothing$, $\forall i \in V$, which indicates  that $(\lambda,\vec x)$ is an eigenpair.

Suppose that $\lim_{p\to1^+}\lambda(\Delta_p)=\lambda$. 
We can always assume that each eigenvector $\vec x^p$ is normalized, i.e., $\|\vec x^p\|_2=1$. Then $(\vec x^p)_{p\to1^+}$ has a convergent subsequence,  with a limit  point 
denoted by $\vec x$. Then, by the above discussions, there is no difficulty to show that $(\lambda,\vec x)$ is an eigenpair of $\Delta_1$. 
 \end{proof}

Denote by  $S_\lambda(\Delta_p)=\{\vec x\ne\vec0:(\lambda,\vec x)\text{ is an eigenpair of }\Delta_p\}$ the eigenspace  corresponding to $\lambda$. For convenience, we set $S_\lambda(\Delta_p)=\varnothing$ if $\lambda$ is not an eigenvalue. And, we usually  work on the set of   the normalized  
eigenvectors, i.e.,  $\hat{S}_\lambda(\Delta_p)=\{\vec x\in\R^n:(\lambda,\vec x)\text{ is an eigenpair of }\Delta_p,\,\|\vec x\|_\infty=1\}$. 

\begin{pro}\label{pro:usc-eigenspace}
The set-valued map 
$(p,\lambda)\mapsto \hat{S}_\lambda(\Delta_p)$ defines an upper semi-continuous map on $[1,+\infty)\times[0,+\infty)$. 
\end{pro}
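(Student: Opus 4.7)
Since $\hat{S}_\lambda(\Delta_p)$ is always contained in the compact $\ell^\infty$-unit sphere $\{\vec y:\|\vec y\|_\infty=1\}$ (or is empty), upper semi-continuity at a point $(p_0,\lambda_0)\in[1,+\infty)\times[0,+\infty)$ is equivalent to the sequential statement: whenever $(p_n,\lambda_n)\to(p_0,\lambda_0)$ and $\vec x^n\in\hat{S}_{\lambda_n}(\Delta_{p_n})$, every limit point $\vec x$ of $(\vec x^n)$ lies in $\hat{S}_{\lambda_0}(\Delta_{p_0})$. The plan is to pass to a convergent subsequence (possible by compactness), note that $\|\vec x\|_\infty=1$ is preserved in the limit so $\vec x\ne\vec 0$, and then pass to the limit in the eigenvalue relation componentwise. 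The argument splits naturally according to whether $p_0>1$ or $p_0=1$.

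When $p_0>1$, all $p_n$ eventually lie in a compact subset of $(1,+\infty)$, and on such a set the maps $(p,\vec y)\mapsto \Delta_p\vec y$ and $(p,\vec y)\mapsto(|y_i|^{p-2}y_i)_{i=1}^n$ are jointly continuous on $\R^n$ (note $|y|^{p-2}y\to 0$ as $y\to 0$ whenever $p>1$, so continuity holds even at coordinates where $y_i=0$). Thus one may take the limit in the identity $(\Delta_{p_n}\vec x^n)_i=\lambda_n\deg(i)|(\vec x^n)_i|^{p_n-2}(\vec x^n)_i$ directly to obtain $(\Delta_{p_0}\vec x)_i=\lambda_0\deg(i)|x_i|^{p_0-2}x_i$ for every $i\in V$, giving $\vec x\in\hat{S}_{\lambda_0}(\Delta_{p_0})$.

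When $p_0=1$, joint continuity fails and one must reuse the limit analysis from Proposition \ref{pro:p-to-1-converge}. Writing $t_{ij}^n=x_i^n-x_j^n$, one checks edge by edge that $|t_{ij}^n|^{p_n-2}t_{ij}^n$ has all its cluster points in $\mathrm{Sgn}(x_i-x_j)$: on edges with $x_i\ne x_j$ the quantity converges to $\mathrm{sign}(x_i-x_j)$, and on edges with $x_i=x_j$ the boundedness $|t_{ij}^n|^{p_n-2}t_{ij}^n\in[-1,1]$ eventually holds, so cluster points lie in $[-1,1]=\mathrm{Sgn}(0)$. Summing, every limit point of $\Delta_{p_n}\vec x^n$ lies in $\Delta_1\vec x$. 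Similarly, because $(\Delta_{p_n}\vec x^n)_i/\deg(i)=\lambda_n|(\vec x^n)_i|^{p_n-2}(\vec x^n)_i$ is bounded, a subsequence of the right-hand side converges; the limit lies in $\lambda_0\,\mathrm{Sgn}(x_i)$ (trivially if $x_i\ne 0$, and by boundedness plus $(\vec x^n)_i\to 0$ if $x_i=0$). Combining these two limits gives $(\Delta_1\vec x)_i\cap\lambda_0\deg(i)\,\mathrm{Sgn}(x_i)\ne\varnothing$ for every $i$, so $\vec x\in\hat{S}_{\lambda_0}(\Delta_1)$.

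The main technical obstacle is the $p_0=1$ case, specifically the bookkeeping on edges $\{i,j\}$ with $x_i=x_j$ but $x_i^n\ne x_j^n$, and on vertices with $x_i=0$ but $(\vec x^n)_i\ne 0$; in both situations one cannot simply substitute, and instead must use compactness of $\mathrm{Sgn}(0)=[-1,1]$ together with the uniform bound $|t_{ij}^n|^{p_n-1}\le 2^{p_n-1}\|\vec x^n\|_\infty^{p_n-1}$ to extract cluster points inside $\Delta_1\vec x$ and $\lambda_0\,\mathrm{Sgn}(x_i)$ respectively. Once these cluster points are produced, passing to a common subsequence and using the closedness of $\Delta_1\vec x$ and of $\lambda_0\deg(i)\,\mathrm{Sgn}(x_i)$ completes the verification, yielding upper semi-continuity on the full product $[1,+\infty)\times[0,+\infty)$.
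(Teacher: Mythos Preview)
Your proposal is correct and follows essentially the same route as the paper: both arguments reduce upper semi-continuity to the closed-graph property (limit points of normalized eigenpairs are eigenpairs), invoke compactness of the $\ell^\infty$-sphere to extract convergent subsequences, and then pass to the limit in the eigenvalue relation using the analysis of Proposition~\ref{pro:p-to-1-converge}. The paper frames this as a contradiction argument and simply cites ``similar to the proof of Proposition~\ref{pro:p-to-1-converge}'' for the key limit step, whereas you spell out the edge-by-edge and vertex-by-vertex bookkeeping explicitly and separate the continuous case $p_0>1$ from the set-valued case $p_0=1$; this is a presentational difference rather than a mathematical one.
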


\begin{proof}
Suppose $p_n\to p$ and $\lambda_n\to \lambda$, $n\to+\infty$. 
By a slight generalization of  Proposition \ref{pro:p-to-1-converge}, if $\lambda$ is not an eigenvalue of $\Delta_p$, then $\lambda_n$ cannot be an eigenvalue of $\Delta_{p_n}$ when $n$ is sufficiently large. In this case, $\hat{S}_\lambda(\Delta_p)=\varnothing=\hat{S}_{\lambda_n}(\Delta_{p_n})$. So $\hat{S}_\lambda(\Delta_p)$ is continuous at $(p,\lambda)$. 

If $\lambda$ is an eigenvalue of $\Delta_p$, we  shall prove that $\hat{S}_\lambda(\Delta_p)$ is upper semi-continuous at $(p,\lambda)$. 
Suppose the contrary, that $(p,\lambda)\mapsto \hat{S}_\lambda(\Delta_p)$ is not upper semi-continuous at some $(p,\lambda)\in [1,+\infty)\times[0,+\infty)$. Then, there exist $\epsilon>0$ and a subsequence $(p_n,\lambda_n)\to (p,\lambda)$ such that $\hat{S}_{\lambda_n}(\Delta_{p_n})\not\subset B_\epsilon\left(\hat{S}_\lambda(\Delta_p)\right)$, where $B_\epsilon\left(\hat{S}_\lambda(\Delta_p)\right)$ denotes the $\epsilon$-neighborhood
of the nonempty   compact  set $\hat{S}_\lambda(\Delta_p)$. Then, by a standard technique, we derive that there is a limit point $\vec x$ of $\hat{S}_{\lambda_n}(\Delta_{p_n})$ which is not in $\hat{S}_\lambda(\Delta_p)$. However,  similar to the proof of Proposition \ref{pro:p-to-1-converge},  $\vec x$  must be an eigenvector of $\lambda(\Delta_p)$, which is a contradiction.
\end{proof}

\begin{remark}
Similarly,  $p\mapsto \bigcup\limits_{\lambda\in \mathrm{spec}(\Delta_p)}\hat{S}_\lambda(\Delta_p)$ is also an upper semi-continuous set-valued map.
\end{remark}

\begin{proof}[Proof of Lemma \ref{lemma:p-lap-usc}]


We only need to show that $p\mapsto \mathrm{sp ec}(\Delta_p)$ is an  upper semi-continuous set-valued map.  Suppose the contrary, that there exists  $\epsilon>0$ and a subsequence $p_n\to p$ and a subsequence $\lambda_{p_n}\in\mathrm{sp ec}(\Delta_{p_n})\setminus  B_\epsilon(\mathrm{sp ec}(\Delta_p))$. Without loss of generality, we may assume that $\lambda_{p_n}$ converges to some number $\lambda$. Then, by $\hat{S}_{\lambda_n}(\Delta_{p_n})\ne\varnothing$ and  Proposition \ref{pro:usc-eigenspace}, there holds $\hat{S}_\lambda(\Delta_p)\ne \varnothing$, implying that $\lambda\in \mathrm{sp ec}(\Delta_p)$. 
This is a contradiction. 
\end{proof}

In addition, we have the upper semi-continuity  of the eigenvalue  multiplicity. Precisely,  denote by  $m(p,\lambda):=\gamma(\hat{S}_\lambda(\Delta_p))$  the $\gamma$-multiplicity of $\lambda$ of $\Delta_p$ (if $\lambda$ is not an eigenvalue of $\Delta_p$, we simply write $m(p,\lambda)=0$). As an analogous, we can define the $\gamma^-$-multiplicity $m^-(p,\lambda)$ and the $\gamma^+$-multiplicity $m^+(p,\lambda)$ for an eigenvalue $\lambda$ of   $\Delta_p$. 
We remark here that the $\gamma^-$-multiplicity was introduced in \cite{JZ21-PM}, and independently in \cite{DFT21}.  
\begin{pro}\label{pro:usc-mult}
 The multiplicity function  $m^-:[1,+\infty)\times [0,+\infty)\to\mathbb{N}$ 
 is upper semi-continuous, i.e., $\limsup\limits_{(p',\lambda')\to (p,\lambda)}m^-(p',\lambda')\le m^-(p,\lambda)$. 
\end{pro}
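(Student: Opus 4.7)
The plan is to combine Proposition~\ref{pro:usc-eigenspace} (upper semi-continuity of $(p,\lambda)\mapsto\hat{S}_\lambda(\Delta_p)$) with a neighborhood-monotonicity property of the Krasnoselskii genus. The key auxiliary statement I would record is: for any nonempty compact centrally symmetric $S\subset\R^n\setminus\{\vec 0\}$ with $\gamma^-(S)=k$, there is a centrally symmetric open neighborhood $U\subset\R^n\setminus\{\vec 0\}$ of $S$ such that $\gamma^-(\overline U)=k$. To prove it, pick an odd continuous map $\varphi:S\to\mathbb{S}^{k-1}$ witnessing the genus, extend each coordinate of $\varphi$ from $S$ to $\R^n$ using Tietze's extension theorem to obtain $\tilde\varphi:\R^n\to\R^k$, antisymmetrize via $\vec x\mapsto\tfrac12(\tilde\varphi(\vec x)-\tilde\varphi(-\vec x))$ to make it odd, observe that $\tilde\varphi|_S=\varphi$ is bounded away from $\vec 0$, and use continuity together with compactness of $S$ to find a symmetric open neighborhood $U$ of $S$ on which $\tilde\varphi$ is nowhere zero. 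Then $\tilde\varphi/\|\tilde\varphi\|:\overline U\to\mathbb{S}^{k-1}$ is odd and continuous, witnessing $\gamma^-(\overline U)\le k$, while the reverse inequality is immediate from $S\subset\overline U$ and monotonicity of $\gamma^-$.

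With this auxiliary lemma in hand, the proposition follows directly. If $m^-(p,\lambda)=k\ge 1$, apply the lemma to $S=\hat{S}_\lambda(\Delta_p)$ (which is compact and centrally symmetric because eigenspaces are invariant under $\vec x\mapsto-\vec x$) to obtain a symmetric neighborhood $U$ with $\gamma^-(\overline U)=k$. Proposition~\ref{pro:usc-eigenspace} then supplies a neighborhood of $(p,\lambda)$ in $[1,+\infty)\times[0,+\infty)$ on which $\hat{S}_{\lambda'}(\Delta_{p'})\subset U$, so by monotonicity of the Krasnoselskii genus
\[
m^-(p',\lambda')=\gamma^-(\hat{S}_{\lambda'}(\Delta_{p'}))\le \gamma^-(\overline U)=k.
\]
If $m^-(p,\lambda)=0$, then either $\hat{S}_\lambda(\Delta_p)=\varnothing$ or $\hat{S}_\lambda(\Delta_p)$ fails to be centrally symmetric; the second possibility is excluded, hence $\lambda\notin\mathrm{spec}(\Delta_p)$, and the first case of the proof of Proposition~\ref{pro:usc-eigenspace} (equivalently, Lemma~\ref{lemma:p-lap-usc}) gives $\hat{S}_{\lambda'}(\Delta_{p'})=\varnothing$, hence $m^-(p',\lambda')=0$, for all $(p',\lambda')$ sufficiently close to $(p,\lambda)$.

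The main obstacle is really just the auxiliary neighborhood lemma for $\gamma^-$; the technical subtlety is producing an odd continuous extension that is nonvanishing on a full ambient neighborhood of $S$ rather than merely a relative one inside $S$, which is what Tietze's theorem plus the antisymmetrization trick plus uniform continuity on the compact set $S$ accomplish. Once this is in hand the argument collapses to Proposition~\ref{pro:usc-eigenspace} combined with monotonicity, and the very same scheme, with the corresponding neighborhood-monotonicity property for the Yang index and for $\gamma^+$, would yield analogous upper semi-continuity for $m$ and $m^+$ as well.
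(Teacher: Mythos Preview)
Your proof is correct and follows essentially the same route as the paper: invoke Proposition~\ref{pro:usc-eigenspace} to get $\hat S_{\lambda'}(\Delta_{p'})$ inside a small symmetric neighborhood of $\hat S_\lambda(\Delta_p)$, then use monotonicity of $\gamma^-$ together with the fact that a compact symmetric set has a neighborhood with the same Krasnoselskii genus. The paper simply cites this last fact as ``continuity of the Krasnoselskii genus $\gamma^-$'' without proof, whereas you supply the standard Tietze-plus-antisymmetrization argument explicitly and also handle the $m^-(p,\lambda)=0$ case separately; both are welcome elaborations but not a different strategy.

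One caution on your closing remark: the analogous ``neighborhood-monotonicity'' property for $\gamma$ and $\gamma^+$ is not established in the paper, and in fact the upper semi-continuity of $m$ and $m^+$ is posed there as an open question. Your conditional phrasing is fine, but be aware that the required ingredient is genuinely unresolved, not a routine variant of the $\gamma^-$ case.
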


\begin{proof}
By the continuity of the Krasnoselskii  genus $\gamma^-$, there exists $\epsilon>0$ such that $\gamma^-(B_\epsilon(\hat{S}_\lambda(\Delta_p)))=\gamma^-(\hat{S}_\lambda(\Delta_p))$. 
By Proposition \ref{pro:usc-eigenspace}, for any $(p',\lambda')$ sufficiently close to $(p,\lambda)$, $\hat{S}_{\lambda'}(\Delta_{p'})\subset B_\epsilon(\hat{S}_\lambda(\Delta_p))$. 
Then, by the monotonicity of $\gamma^-$, we have $\gamma^-(\hat{S}_{\lambda'}(\Delta_{p'}))\le \gamma^-(B_\epsilon(\hat{S}_\lambda(\Delta_p)))$. 
Therefore, $m^-(p',\lambda')=\gamma^-(\hat{S}_{\lambda'}(\Delta_{p'}))\le \gamma^-(\hat{S}_\lambda(\Delta_p))=m^-(p,\lambda)$.
\end{proof}

Finally, we show the locally Lipschitz continuity of variational eigenvalues. 
\begin{pro}\label{pro:min-max-eigen}
For any $k\in\{1,\cdots,n\}$,  the functions $p\mapsto \lambda_k(\Delta_p)$,  $p\mapsto \lambda_k^-(\Delta_p)$ and  $p\mapsto \lambda_k^+(\Delta_p)$, 
 are locally Lipschitz continuous with respect to $p\in [1,+\infty)$.
\end{pro}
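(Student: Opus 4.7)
The plan is to leverage the common min-max structure shared by the three sequences: writing $\Lambda_k(p)=\inf_{S\in\mathcal{F}_k}\sup_{\vec x\in S} F_p(\vec x)$, where $\mathcal{F}_k$ collects compact centrally symmetric subsets of $\R^n\setminus\{\vec 0\}$ whose Yang index (resp.\ Krasnoselskii genus $\gamma^-$, or the $\gamma^+$-index) is at least $k$, a single Lipschitz estimate on $F_p$ will handle all three cases simultaneously. Fix any compact subinterval $[1,p_1]\subset[1,+\infty)$; I aim to produce a constant $C=C(G,p_1)$ with $|\Lambda_k(p)-\Lambda_k(p')|\le C|p-p'|$ for all $p,p'\in[1,p_1]$.

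First, I would \emph{restrict the test sets to the unit $\ell^\infty$-sphere}. Since $F_p$ is $0$-homogeneous, the radial projection $\pi(\vec x)=\vec x/\|\vec x\|_\infty$ preserves $\sup_S F_p$, and $\pi|_S\colon S\to\pi(S)$ is an odd continuous map, so each of the three indices satisfies $\mathrm{index}(\pi(S))\ge \mathrm{index}(S)$. Consequently the infimum defining $\Lambda_k(p)$ may be restricted to compact centrally symmetric subsets of $\mathbb{S}_\infty:=\{\vec x:\|\vec x\|_\infty=1\}$ without altering its value. It is also harmless to assume that $G$ has no isolated vertex, since such vertices contribute only trivially to the spectrum.

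The main step is to establish a uniform Lipschitz estimate
\[
|F_{p'}(\vec x)-F_p(\vec x)|\le C|p'-p|,\qquad \vec x\in\mathbb{S}_\infty,\ p,p'\in[1,p_1].
\]
For fixed $\vec x\ne \vec 0$, the map $p\mapsto F_p(\vec x)$ is smooth on $(0,+\infty)$, with $\partial_p F_p$ expressible as a rational function of $N(p)=\sum_{\{i,j\}\in E}|x_i-x_j|^p$, $D(p)=\sum_i\deg(i)|x_i|^p$, and their $p$-derivatives. Setting $0^p\log 0:=0$, each summand in $\partial_p N$ and $\partial_p D$ has the form $t^p\log t$ with $t\in[0,2]$, a function continuous (hence bounded) on $[1,p_1]\times[0,2]$. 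On $\mathbb{S}_\infty$ the denominator satisfies $D(p)\ge 1$ (pick $i^*$ with $|x_{i^*}|=1$ and use $\deg(i^*)\ge 1$). Together these provide a uniform bound on $|\partial_p F_p|$ on the compact set $[1,p_1]\times\mathbb{S}_\infty$, and the mean value theorem delivers the Lipschitz estimate.

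Passing through the min-max is then routine: for any $S\subset\mathbb{S}_\infty$ in $\mathcal{F}_k$ and any $p,p'\in[1,p_1]$, the uniform estimate gives $\sup_S F_{p'}\le \sup_S F_p+C|p-p'|$; taking the infimum yields $\Lambda_k(p')\le\Lambda_k(p)+C|p-p'|$, and swapping $p,p'$ closes the loop. The principal obstacle is the uniform bound in the third step, in particular its validity down to the boundary $p=1$; this hinges on the elementary but crucial fact that $t\log t$ extends continuously (indeed boundedly) to $[0,2]$, after which the rest of the argument is standard.
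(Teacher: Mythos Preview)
Your argument is correct and follows essentially the same route as the paper: establish a uniform-in-$\vec x$ Lipschitz bound $|F_{p'}(\vec x)-F_p(\vec x)|\le C|p'-p|$ (the paper phrases this as ``the function $(p,q)\mapsto\max_{\vec x\ne 0}|F_q(\vec x)-F_p(\vec x)|$ is locally Lipschitz'') and then pass it through the $\inf$--$\sup$ defining the min-max eigenvalues. The paper's own proof is a one-line sketch that asserts the Lipschitz estimate and cites Lemma~\ref{lemma:homology-eigen-increasing} for the second step, so your version simply fills in the details it omits.
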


\begin{proof}[Proof of Proposition \ref{pro:min-max-eigen}]
It is known that the  $k$-th min-max eigenvalue $\lambda_k^-(\Delta_p)$ varies 
  continuously in $p$. This statement  
  is a direct consequence of the main result in \cite{DM14}, and thus we omit it. Actually, the continuity of $\lambda_k(\Delta_p)$, $\lambda_k^-(\Delta_p)$ and $\lambda_k^+(\Delta_p)$ w.r.t. $p$ can 
  be derived from   Theorem \ref{thm:most} straightforwardly. 
  
Moreover, since the function $(p,q)\mapsto \max_{x\ne0}|F_q(\vec x)-F_p(\vec x)|$ is locally Lipschitz, it follows from Lemma \ref{lemma:homology-eigen-increasing} that $p\mapsto \lambda_k(\Delta_p)$,  $p\mapsto \lambda_k^-(\Delta_p)$ and  $p\mapsto \lambda_k^+(\Delta_p)$ 
 are also locally Lipschitz. 
\end{proof}

\begin{remark}
The continuity of the $k$-th min-max eigenvalue $\lambda_k(\Delta_p)$ is  essentially 
known. We refer to  \cite{Parini11,DM14} for the case of   
the  $p$-Laplacian on Euclidean  domains under Dirichlet  boundary conditions. 
\end{remark}

\subsection{Homological eigenvalues 
of $\Delta_p$ 
and eigenvalues produced by homotopical linking}
\label{sec:homological-eigen}
The homological critical value is an important concept used in the theory of persistent
homology. We shall adopt the definition proposed  by  Govc \cite{Govc16}, 
which is inspired by the definition suggested by Bubenik and Scott \cite{BS14}: 
\begin{defn}\label{defn:homological-critical}
A  real number $c$ is a {\sl homological regular value} of the function $F$ if there exists $\epsilon> 0$ such that for each pair of real numbers $t_1<t_2$ on the interval $(c-\epsilon,c+\epsilon)$, the inclusion $\{F\le t_1\}\hookrightarrow \{F\le t_2\}$ induces isomorphisms on all homology groups \cite{BS14}. A
real number that is not a homological regular value of $F$ is called a {\sl homological
critical value} of $F$.
\end{defn}

It is clear that 
the following 
\textbf{critical value lemma} holds: 

\begin{lemma}\label{lemma:homological-cri}
Suppose the  function $F$ has no homological critical values on the closed interval $[x,y]$, then the inclusion  $F^{-1}(-\infty,x]\hookrightarrow F^{-1}(-\infty,y]$
induces isomorphisms on all homology groups. 
\end{lemma}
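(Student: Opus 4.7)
The plan is to derive the global isomorphism from the pointwise (local) isomorphisms furnished by the definition of homological regular value, via a standard compactness and chaining argument.

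First, I would unpack the hypothesis: by definition, for every $c\in[x,y]$ there exists $\epsilon_c>0$ such that for every pair $t_1<t_2$ in $(c-\epsilon_c,c+\epsilon_c)$ the inclusion $\{F\le t_1\}\hookrightarrow\{F\le t_2\}$ induces isomorphisms on all homology groups. The family $\{(c-\epsilon_c,c+\epsilon_c)\}_{c\in[x,y]}$ is an open cover of the compact interval $[x,y]\subset\mathbb{R}$, so it admits a finite subcover indexed by points $c_1<c_2<\cdots<c_N$ of $[x,y]$, with local radii $\epsilon_1,\ldots,\epsilon_N$.

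Next I would extract a sufficiently fine partition $x=t_0<t_1<\cdots<t_m=y$ with the property that every consecutive pair $t_{j-1}<t_j$ lies inside a single interval $(c_{i(j)}-\epsilon_{i(j)},c_{i(j)}+\epsilon_{i(j)})$. This is a Lebesgue-number type observation: pick any positive $\delta$ smaller than a Lebesgue number of the finite cover, then any partition with mesh $<\delta$ will work. By the definition of homological regular value applied at each $c_{i(j)}$, the inclusion $\{F\le t_{j-1}\}\hookrightarrow\{F\le t_j\}$ induces an isomorphism on $H_q$ for every $q\ge 0$.

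Finally, I would observe that the inclusion $F^{-1}(-\infty,x]\hookrightarrow F^{-1}(-\infty,y]$ factors as the composition of the inclusions $\{F\le t_{j-1}\}\hookrightarrow\{F\le t_j\}$ for $j=1,\ldots,m$, and that the induced map on homology is the corresponding composition of isomorphisms, hence itself an isomorphism. There is no serious obstacle in this argument; the only point that merits a small amount of care is the compactness/Lebesgue-number step, which is a routine reduction from the local (open-interval) form of the hypothesis to a global (closed-interval) conclusion. No additional assumption on $F$ beyond the local homological regularity at every $c\in[x,y]$ is needed, which matches the generality of Definition \ref{defn:homological-critical}.
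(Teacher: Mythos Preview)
Your argument is correct: the compactness of $[x,y]$ plus a Lebesgue-number partition turns the local isomorphisms guaranteed by Definition~\ref{defn:homological-critical} into a finite chain of isomorphisms whose composite is the inclusion-induced map $H_*(\{F\le x\})\to H_*(\{F\le y\})$. The paper itself does not supply a proof of Lemma~\ref{lemma:homological-cri}; it simply declares the result ``clear'' (implicitly in the spirit of Govc \cite{Govc16}), so your write-up in fact fills in exactly the routine argument the paper omits.
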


\begin{remark}
A {\sl symmetric homological critical value} \cite{SEH07} of $F$ is a real number $c$ for which there exists an integer $k$ such that for all sufficiently small $\epsilon> 0$, the map $H_k(\{F\le c-\epsilon\})\hookrightarrow H_k(\{F\le c+\epsilon\})$ induced by inclusion is not an isomorphism \cite{BS14}. Here $H_k$ denotes the $k$-th singular homology (possibly with coefficients in a field). 
As mentioned in \cite{Govc16,BS14}, this widely cited definition
 given by Cohen-Steiner,  Edelsbrunner and  Harer \cite{SEH07} doesn't imply the critical value lemma in generic scenarios. 

In fact, if we replace homological critical values by symmetric homological critical values in Lemma \ref{lemma:homological-cri}, the conclusion 
doesn't hold even though $F$ is continuous.  Govc \cite{Govc16} presented a  modified version of the critical value lemma: Suppose $F$ is continuous and has no symmetric homological critical values on the interval $[x,y)$, then the inclusion  $F^{-1}(-\infty,x)\hookrightarrow F^{-1}(-\infty,y)$
induces isomorphisms on all homology groups. 
\end{remark}

We shall simply recall the deformation lemma which will be essentially used many times in the present paper. In critical point theory, the deformation lemma roughly says that if an interval $[a,b]\subset\R$ contains no  critical value of a typical continuous function $F$, then there is a continuous  deformation from the sublevel sets $\{F\le b\}$ to $\{F\le a\}$, which induces a homotopy equivalence between these sublevel sets 
\cite{Clarke,PereraAgarwalO'Regan}.

\begin{pro}\label{pro:minmax-homological}
Let $F$ be a continuous  even function on $\{\vec x\in\R^n:\|\vec x\|_\infty=1\}$. Then, the min-max critical values
\[\lambda_k:=\inf_{\gamma(S)\ge k}\sup\limits_{\vec x\in S}F(\vec x),\;k=1,2,\cdots,\]
of $F$ are homological  critical values of $F$. 
\end{pro}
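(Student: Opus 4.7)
My strategy is a proof by contradiction that exploits the jump behavior of Yang's index at a min-max value together with the naturality of the index homomorphism $\nu_*$. Assume for contradiction that $\lambda_k$ is not a homological critical value of $F$. By Definition~\ref{defn:homological-critical} there exists $\epsilon>0$ such that for every pair $\lambda_k-\epsilon<t_1<t_2<\lambda_k+\epsilon$ the inclusion $\iota:\{F\le t_1\}\hookrightarrow \{F\le t_2\}$ induces isomorphisms on all homology groups. Fix any such $t_1<\lambda_k<t_2$; each sublevel set is compact and, since $F$ is even, is invariant under the antipodal involution, which acts freely on $\R^n\setminus\{\vec 0\}$.

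The first ingredient is the standard jump property of $\gamma$ at a min-max value, which I would read off directly from $\lambda_k=\inf_{\gamma(S)\ge k}\sup_S F$. For every $t<\lambda_k$ one has $\gamma(\{F\le t\})<k$, since otherwise $\{F\le t\}$ itself would be a competitor with $\sup F\le t<\lambda_k$, contradicting the infimum. For every $t>\lambda_k$ one has $\gamma(\{F\le t\})\ge k$, by taking a near-minimizer $S\subset \{F\le t\}$ with $\gamma(S)\ge k$ and invoking monotonicity of $\gamma$ under equivariant inclusion. In particular $\gamma(\{F\le t_1\})<k\le \gamma(\{F\le t_2\})$.

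The heart of the argument is to show that the assumed homology isomorphism on $\iota$ forces $\gamma(\{F\le t_1\})=\gamma(\{F\le t_2\})$, contradicting the jump. Because the antipodal action is free, the norm map $1+(-)_{\#}$ yields a natural chain isomorphism (over $\mathbb{Z}_2$) between the symmetric subcomplex $C_*(X,-)=C_*(X)^{\mathbb{Z}_2}$ and the quotient chain complex $C_*(X/\mathbb{Z}_2)$, so $H_*(\{F\le t_i\},-)\cong H_*(\{F\le t_i\}/\mathbb{Z}_2;\mathbb{Z}_2)$. The Cartan--Leray spectral sequence of the double cover $X\to X/\mathbb{Z}_2$, applied to $\iota$, converts the assumed ordinary-homology isomorphism on total spaces (with $\mathbb{Z}_2$ coefficients, obtained from the hypothesis via universal coefficients if necessary) into an isomorphism on the $E_2$-page and hence on the abutment. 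Combined with the naturality of $\nu_*$ with respect to equivariant chain maps, this produces $\gamma(\{F\le t_1\})=\gamma(\{F\le t_2\})$, the desired contradiction.

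The main obstacle is this last step: translating the purely ordinary-homology hypothesis of Definition~\ref{defn:homological-critical} into an equivariant/symmetric statement that detects Yang's index; the argument above rests on the freeness of the antipodal action. An alternative route, better suited to locally Lipschitz $F$ such as the Rayleigh quotients $F_p$, is to use Lemma~\ref{lemma:homological-cri} together with a pseudo-gradient flow to construct an odd continuous map $r:\{F\le t_2\}\to\{F\le t_1\}$; the mere existence of $r$ forces $\gamma(\{F\le t_2\})\le \gamma(\{F\le t_1\})$ by the same naturality of $\nu_*$, and again contradicts the jump property.
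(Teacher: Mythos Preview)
Your approach is essentially the paper's: both establish the jump $\gamma(\{F\le t_1\})<k\le\gamma(\{F\le t_2\})$ across $\lambda_k$ and then argue that such a jump is incompatible with the inclusion inducing an isomorphism on ordinary homology. The paper simply cites this last implication to Yang \cite{Yang} and \cite{PereraAgarwalO'Regan} (working at the closed level $t_2=\lambda_k$ after reducing without loss of generality to the case $\lambda_k<\lambda_{k+1}$), whereas you sketch a proof via Cartan--Leray for the free double cover. Your sketch is sound in substance, though the assertion that the norm map is a \emph{chain} isomorphism $C_*(X)^{\mathbb{Z}_2}\cong C_*(X/\mathbb{Z}_2)$ is only literally correct for an equivariant cellular chain model, not for singular chains; the conclusion at the level of homology survives once one passes to such a model (or argues directly with the Borel construction). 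Your alternative deformation route is, as you acknowledge, unavailable under the bare hypothesis that $F$ is merely continuous.
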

\begin{proof}
Suppose  $\lambda_k$ is the $k$-th min-max critical value of $F$, and assume  $\lambda_k<\lambda_{k+1}$. We shall prove that $\lambda_k$ is a homological critical value. On one hand, there exists $A\subset \{F\le \lambda_k\}$ with $\gamma(A)\ge k$, which yields  $\gamma\{F\le \lambda_k\}\ge k$. On the other hand, for any $A'$ with $\gamma(A')\ge k+1$, $A'\not\subset \{F\le \lambda_k\}$, which implies that  $\gamma\{F\le \lambda_k\}< k+1$. Therefore, $\gamma\{F\le \lambda_k\}= k$. 

Similarly, it is easy to check that  $\gamma\{F\le \lambda_k-\epsilon\}<k$ for any sufficiently small $\epsilon>0$. 
Since 
different Yang indices imply different homology groups (see \cite{Yang} or Section 0.7 in \cite{PereraAgarwalO'Regan}), the homology of the topology of the  sublevel set changes from $\lambda_k-\epsilon$ to $\lambda_k$. Therefore, $\lambda_k$ is a homological  critical value of $F$.
\end{proof}

Returning to graph $p$-Laplacians,  we introduce the concept of  homological  eigenvalues. 
\begin{defn}
We say $\lambda$ is a homological  eigenvalue of  $\Delta_p$, if $\lambda$ is a homological  critical value of the $p$-Rayleigh quotient
$$F_p(\vec x):=\frac{\sum_{\{j,i\}\in E}|x_i-x_j|^{p}}{\sum_{i\in V}\deg(i)|x_i|^p}$$
where $1\le p<+\infty$. We say $\lambda$ is an {\sl isolated  homological  eigenvalue} of  $\Delta_p$, if $\lambda$ is a homological  eigenvalue, and $\lambda$ is not a limit point of the eigenvalues of   $\Delta_p$.
\end{defn}

For simplicity, we  use $\{F_p\le \lambda\}$ to indicate the lower level set $\{\vec x\in\R^n:F_p(\vec x)\le \lambda,\|\vec x\|_\infty=1\}$.

\begin{theorem}\label{thm:homotopy-eigen}
For any homological  eigenvalue $\lambda$ of $\Delta_1$, and for any $\epsilon>0$, there exists $\delta>0$ such that for any $p\in (1,1+\delta)$,  $\Delta_{p}$ has an     eigenvalue in $(\lambda-\epsilon,\lambda+\epsilon)$.
\end{theorem}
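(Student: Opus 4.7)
The plan is to localize an eigenvalue of $\Delta_p$ near $\lambda$ by tracking how the homology of the sublevel sets $\{F_p \le t\}$ changes as $t$ crosses $\lambda$, and transferring this change from $F_1$ to $F_p$ via uniform convergence. I work on the compact sphere $S := \{\vec x \in \R^n : \|\vec x\|_\infty = 1\} = |K_n|$, on which both $F_1$ and $F_p$ are continuous and the denominator $\sum_i \deg(i) |x_i|^p$ is uniformly bounded below (some coordinate has absolute value $1$). An elementary estimate using the fact that $|t|^p \to |t|$ uniformly on bounded intervals as $p \to 1^+$ yields that $F_p \to F_1$ uniformly on $S$ as $p \to 1^+$.

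The first step is to choose regular values bracketing $\lambda$. Because $F_1$ is a semi-algebraic (in fact piecewise-linearly structured) Lipschitz function on the finite polyhedron $S$, only finitely many distinct homotopy types appear among its sublevel sets, hence its set of homological critical values is finite; in particular $\lambda$ is isolated among them. For the given $\epsilon > 0$ I pick homological regular values $t_1 < \lambda < t_2$ in $(\lambda - \epsilon, \lambda + \epsilon)$ such that $\lambda$ is the only homological critical value of $F_1$ in $[t_1, t_2]$. By the critical value lemma (Lemma \ref{lemma:homological-cri}), the inclusion $\{F_1 \le t_1\} \hookrightarrow \{F_1 \le t_2\}$ must fail to induce an isomorphism on some homology group, for otherwise $\lambda$ would be regular. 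I then fix $\eta > 0$ so small that $[t_j - \eta, t_j + \eta]$ contains no homological critical value of $F_1$ for $j = 1, 2$.

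The second step is the squeeze. By uniform convergence, pick $\delta > 0$ such that $|F_p - F_1| < \eta$ on $S$ whenever $p \in (1, 1 + \delta)$. Then for $j = 1, 2$,
$$\{F_1 \le t_j - \eta\} \subset \{F_p \le t_j\} \subset \{F_1 \le t_j + \eta\},$$
and Lemma \ref{lemma:homological-cri} implies that all inclusions among $F_1$-sublevel sets at levels in $[t_j - \eta, t_j + \eta]$ are homology isomorphisms. The commutative square built from $\{F_p \le t_j\} \hookrightarrow \{F_1 \le t_j + \eta\}$ for $j = 1, 2$, together with the isomorphism $\{F_1 \le t_1 + \eta\} \hookrightarrow \{F_1 \le t_2\} \hookrightarrow \{F_1 \le t_2 + \eta\}$ composed with the inverse isomorphism $\{F_1 \le t_1\} \hookrightarrow \{F_1 \le t_1 + \eta\}$, forces the top map $H_*(\{F_p \le t_1\}) \to H_*(\{F_p \le t_2\})$ to reproduce the failure of the non-isomorphism $H_*(\{F_1 \le t_1\}) \to H_*(\{F_1 \le t_2\})$. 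Hence $F_p$ has a homological critical value, and therefore a critical value, inside $[t_1, t_2]$. Since for $p > 1$ the critical values of $F_p$ coincide with the eigenvalues of $\Delta_p$, we obtain the required eigenvalue in $(\lambda - \epsilon, \lambda + \epsilon)$.

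The hard part will be twofold. First, rigorously certifying that the homological critical values of $F_1$ are isolated at $\lambda$ — I expect to invoke semi-algebraicity or the explicit simplicial structure of $|K_n|$, since only finitely many incidence relations $x_i \ge x_j$, $x_i = 0$ on faces of $S$ can change. Second, ensuring that the squeezed inclusions assemble into the commutative diagram flanked by homology isomorphisms so that non-iso of the $F_1$-arrow propagates to non-iso of the $F_p$-arrow; this is a straightforward diagram chase once the isomorphisms are in place, but requires checking that the chosen $\eta$ is compatible on both ends simultaneously.
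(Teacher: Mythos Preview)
Your approach is essentially the paper's: reduce to the fact that $\lambda$ is an \emph{isolated} homological critical value of $F_1$ (the paper cites finiteness of the $\Delta_1$-spectrum from \cite{CSZ17}; your semi-algebraic/PL argument is a fine alternative), then use uniform convergence $F_p\to F_1$ together with an interleaving of sublevel sets to transport the homological jump to $F_p$. The paper packages the second step as Lemma~\ref{lemma:homology-eigen-increasing} and invokes it directly.

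The one concrete gap is in your diagram chase. The three-term squeeze $\{F_1\le t_j-\eta\}\subset\{F_p\le t_j\}\subset\{F_1\le t_j+\eta\}$ with the outer inclusion an $H_*$-isomorphism only tells you that $H_*(\{F_1\le t_j-\eta\})\to H_*(\{F_p\le t_j\})$ is injective and $H_*(\{F_p\le t_j\})\to H_*(\{F_1\le t_j+\eta\})$ is surjective; it does \emph{not} force either to be an isomorphism, so you cannot yet conclude that the non-isomorphism on the $F_1$ side propagates to $\{F_p\le t_1\}\hookrightarrow\{F_p\le t_2\}$. (Indeed, your sentence calling ``$\{F_1\le t_1+\eta\}\hookrightarrow\{F_1\le t_2\}$'' an isomorphism is wrong: that inclusion crosses $\lambda$.) What is needed is a four-term chain
\[
\{f\le c\}\hookrightarrow\{f_\epsilon\le c+\epsilon\}\hookrightarrow\{f\le c+2\epsilon\}\hookrightarrow\{f_\epsilon\le c+3\epsilon\}
\]
in which \emph{both} the $f$-pair and the $f_\epsilon$-pair are assumed iso; then Lemma~3.1 of \cite{Govc16} forces all four maps to be iso. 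This is exactly the mechanism in the paper's Lemma~\ref{lemma:homology-eigen-increasing}, and it is why the stability interval there is $[\lambda-3\epsilon,\lambda+3\epsilon]$ rather than $[\lambda-\epsilon,\lambda+\epsilon]$. Once you run this four-term argument on each side of $\lambda$ (choosing $\eta$ small enough that the relevant $F_1$-windows stay inside your regular region), the contradiction goes through and your proof is complete.
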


Theorem \ref{thm:homotopy-eigen} overcomes the difficulty 
that in general, a  non-variational  $\Delta_1$-eigenvalue may not be a limit point of the $\Delta_p$-eigenvalues.  
We shall prove it at the end of this section.



The concept of homotopical linking is useful for obtaining critical points. 

\begin{defn}[Definition 0.17 in \cite{PereraAgarwalO'Regan}]
For subsets $Q$, $Q'$ and $S$ of a given topological space $X$ with $Q\subset Q'$,   
we say that $ Q$ homotopically links $S$  with respect to $Q'$, if $ Q\cap S=\varnothing$ and  for any continuous map  $\gamma:Q'\to X$ with $\gamma|_Q=\mathrm{id}$  (i.e., $\gamma(\vec x)= \vec x$ for any $\vec x\in Q$),  $\gamma(Q')\cap S\ne \varnothing$. 
\end{defn}
If  $ Q$ homotopically links $S$  with respect to $Q'$, and $f:X\to\R$ is continuous with  $\min\limits_{\vec x\in S}f(\vec x)>\max\limits_{\vec x\in   Q}f(\vec x)$, then by linking theorem (Theorem 0.21 and Proposition 3.21 in \cite{PereraAgarwalO'Regan}), $$
\inf\limits_{\text{continuous }\gamma:Q'\to X\text{ with }\gamma|_Q=\mathrm{id}}\;\max\limits_{\vec x\in\gamma(Q')}f(\vec x)$$  is a critical value of $f$, which is said to be  a  {\sl critical value of $f$  produced by  homotopical  linking}. 

The next lemma shows the stability and local monotonicity for both isolated homological critical values and critical values produced by homotopical linking.

\begin{lemma}\label{lemma:homology-eigen-increasing}
Given a function  $f$ on a topological space $X$,  for any isolated homological critical value $\lambda$ of $f$, and for any sufficiently small 
$\epsilon>0$, for any $\epsilon$-perturbation $f_\epsilon$ of $f$, i.e.,   $\|f_\epsilon-f\|_\infty<\epsilon$, there is a homological critical value of $f_\epsilon$ in $[\lambda-3\epsilon,\lambda+3\epsilon]$. This is the stability of isolated homological critical
values.

If $f_\epsilon$ is further assumed to an increasing $\epsilon$-perturbation, i.e.,  $f(x)\le f_\epsilon(x)\le  f(x)+\epsilon$, $\forall x$, then 
there is a homological critical value of $f_\epsilon$ in $[\lambda,\lambda+\epsilon]$. This shows the local monotonicity  of isolated homological critical
values.

All the above statements still hold if we use critical values produced by homotopical linking instead of isolated homological critical values, and if we further assume that both $f$ and $f_\epsilon$ are continuous. 
\end{lemma}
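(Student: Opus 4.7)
The plan is to argue by contradiction in each case, combining the critical value lemma (Lemma~\ref{lemma:homological-cri}) with the interleaving of sublevel sets induced by the perturbation bound. Writing $A_t := \{f \le t\}$ and $B_t := \{f_\epsilon \le t\}$, the hypothesis $\|f_\epsilon - f\|_\infty < \epsilon$ immediately yields the sandwich inclusions
\[
A_{t-\epsilon}\subset B_t\subset A_{t+\epsilon}\qquad\text{and}\qquad B_{t-\epsilon}\subset A_t\subset B_{t+\epsilon}
\]
for every $t\in\R$, which will be the workhorse throughout.

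For the stability claim, since $\lambda$ is isolated, I first choose $\epsilon$ small enough that $\lambda$ is the unique homological critical value of $f$ in $[\lambda-4\epsilon,\lambda+4\epsilon]$. Suppose for contradiction that $f_\epsilon$ has no homological critical value in $[\lambda-3\epsilon,\lambda+3\epsilon]$; Lemma~\ref{lemma:homological-cri} then forces every inclusion $B_{s_1}\hookrightarrow B_{s_2}$ with $s_1\le s_2$ in this interval to induce isomorphisms on all homology groups. I would then thread the alternating chain
\[
B_{\lambda-3\epsilon}\subset A_{\lambda-2\epsilon}\subset B_{\lambda-\epsilon}\subset A_\lambda\subset B_{\lambda+\epsilon}\subset A_{\lambda+2\epsilon}\subset B_{\lambda+3\epsilon}
\]
and combine this with the observation that the $A$-intervals $[\lambda-4\epsilon,\lambda-2\epsilon]$ and $[\lambda+2\epsilon,\lambda+4\epsilon]$ likewise contain no homological critical value of $f$ (again by Lemma~\ref{lemma:homological-cri}), chasing the induced homology diagram to deduce that $A_{\lambda-\delta}\hookrightarrow A_{\lambda+\delta}$ is an isomorphism for some $\delta>0$. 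Together with the critical value lemma on intervals avoiding $\lambda$, this would contradict $\lambda$ being a homological critical value of $f$. The main obstacle is precisely this diagram chase: the raw $A$-to-$B$ and $B$-to-$A$ maps in the sandwich are a priori only injective or surjective, and the desired isomorphism must be extracted by carefully exploiting how the two interleaved persistence modules braid together.

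For the monotonicity statement, the one-sided bound $f\le f_\epsilon\le f+\epsilon$ yields only the half-sandwich $A_{t-\epsilon}\subset B_t\subset A_t$. Re-running the contradiction argument with a presumed critical-value-free window $[\lambda,\lambda+\epsilon]$ for $f_\epsilon$, the same style of diagram chase shows that $f_\epsilon$ must detect the feature at $\lambda$ inside $[\lambda,\lambda+\epsilon]$, reflecting that raising $f$ pointwise by at most $\epsilon$ shifts its barcode upward by at most $\epsilon$. Finally, for critical values produced by homotopical linking the argument is direct from the definition: the condition that $Q$ homotopically links $S$ relative to $Q'$ is purely topological and so is unaffected by the perturbation, while for small $\epsilon$ the strict separation $\min_S f>\max_Q f$ survives in $f_\epsilon$ since $\min_S f_\epsilon \ge \min_S f-\epsilon$ and $\max_Q f_\epsilon \le \max_Q f+\epsilon$. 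Hence $f_\epsilon$ still admits a linking critical value $\lambda_\epsilon:=\inf_\gamma\max_{\gamma(Q')} f_\epsilon$, and the pointwise bound $|f-f_\epsilon|<\epsilon$ transfers immediately to $|\lambda-\lambda_\epsilon|<\epsilon$, refining to $\lambda_\epsilon\in[\lambda,\lambda+\epsilon]$ in the monotone case.
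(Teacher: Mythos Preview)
Your approach is essentially the same as the paper's: contradiction via the interleaving of sublevel sets plus the critical value lemma for the homological part, and a direct min--max estimate for the linking part. The diagram chase you flag as the main obstacle is resolved in the paper by the elementary fact (Lemma~3.1 in \cite{Govc16}) that in a chain $X_1\to X_2\to X_3\to X_4$, if both composites $X_1\to X_3$ and $X_2\to X_4$ induce homology isomorphisms then so does each individual map; applying this to overlapping windows of the alternating $A$/$B$ chain---where the $B$-to-$B$ two-step composites are isomorphisms by the assumed absence of critical values of $f_\epsilon$, and the outer $A$-to-$A$ composites are isomorphisms because $\lambda$ is isolated---yields the desired $A_{\lambda-\delta}\hookrightarrow A_{\lambda+\delta}$ isomorphism and hence the contradiction.
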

\begin{proof}
We concentrate on homological critical values. Without loss of generality, we assume that there is no homological critical value of $f$ in $[\lambda-3\epsilon,\lambda)\cup(\lambda,\lambda+3\epsilon]$.  Then, we can find $\tilde{\epsilon}_1>0$ such that any value in $(\lambda-3\epsilon-\tilde{\epsilon}_1,\lambda)\cup(\lambda,\lambda+3\epsilon+\tilde{\epsilon}_1)$ is homological regular.  

For the first statement about  the stability of isolated homological critical
values,  we suppose the contrary, that there is no homological critical value of $f_\epsilon$ in $[\lambda-3\epsilon,\lambda+3\epsilon]$.  Then, there exists  $\epsilon_1\in(0,\tilde{\epsilon}_1]$ such that any value in $(\lambda-3\epsilon-\epsilon_1,\lambda+3\epsilon+\epsilon_1)$ is   homological regular for  $f_\epsilon$. We consider the inclusion relation
$$\{f\le c\}\mathop{\hookrightarrow}\limits^{i_1} \{f_\epsilon\le c+\epsilon\}\mathop{\hookrightarrow}\limits^{i_2}\{f\le c+2\epsilon\}\mathop{\hookrightarrow}\limits^{i_3}\{f_\epsilon \le c+3\epsilon\}.
$$
Taking $c=\lambda-3\epsilon-\epsilon_1'$ in the above inclusion relation, where $0<\epsilon_1'<\epsilon_1$, 
by the critical value lemma, 
both the inclusion $\{f_\epsilon\le c+\epsilon\} \mathop{\hookrightarrow}\limits^{i_3\circ i_2} \{f_\epsilon \le c+3\epsilon\}$ and the inclusion $\{f\le c\}\mathop{\hookrightarrow}\limits^{i_2\circ i_1} \{f\le c+2\epsilon\}$ induce isomorphisms on all homology groups, and then 
by Lemma 3.1 in \cite{Govc16}, the inclusions  $i_1$, $i_2$ and $i_3$ also induce isomorphisms on all homology groups.  

In consequence, we get the {\sl homological equivalence}   $\{f\le \lambda-3\epsilon-\epsilon_1'\}\sim \{f_\epsilon \le \lambda-\epsilon_1'\}$, i.e., the inclusion  $\{f_\epsilon\le \lambda-3\epsilon-\epsilon_1'\} \hookrightarrow \{f \le \lambda-\epsilon_1'\}$ induces isomorphisms on all homology groups.  

Similarly, taking $c=\lambda+\epsilon_1'$   
in the following inclusion chain
$$\{f_\epsilon\le c\}\subset \{f\le c+\epsilon\}\subset\{f_\epsilon\le c+2\epsilon\}\subset\{f \le c+3\epsilon\},
$$
we have the homological equivalence $ \{f_\epsilon \le \lambda+\epsilon_1'\}\sim \{f\le \lambda+3\epsilon+\epsilon_1'\}$. 

Note that by the inclusion chain  
$$\{f\le \lambda-3\epsilon-\epsilon_1'\}\mathop{\hookrightarrow}\limits^{i^1} \{f_\epsilon \le \lambda-\epsilon_1'\}\mathop{\hookrightarrow}\limits^{i^2} \{f_\epsilon \le \lambda+\epsilon_1'\}\mathop{\hookrightarrow}\limits^{i^3} \{f\le \lambda+3\epsilon+\epsilon_1'\}$$
we have the  
diagram 
$$\xymatrix{H_*(\{f\le \lambda-3\epsilon-\epsilon_1'\})\ar[rr]^{i^3_*\circ i^2_*\circ  i^1_*}\ar[d]^{i^1_*}&& H_*(\{f\le \lambda+3\epsilon+\epsilon_1'\})\\ H_*(\{f_\epsilon \le \lambda-\epsilon_1'\}) \ar[rr]^{i^2_*}&& H_*(\{f_\epsilon \le \lambda+\epsilon_1'\})\ar[u]^{i^3_*}}.$$
Since $f_\epsilon$ has no homological critical value in $(\lambda-3\epsilon-\epsilon_1,\lambda+3\epsilon+\epsilon_1)$, the inclusion $i^2$ induces the isomorphisms $i^2_*$ on all homology groups. 
Then, we have obtained that all  the inclusions $i^1,i^2,i^3$ induce  isomorphisms $i^1_*,i^2_*,i^3_*$, which leads to an isomorphism \[(i^3\circ i^2\circ  i^1)_*=i^3_*\circ i^2_*\circ  i^1_*:H_*(\{f\le \lambda-3\epsilon-\epsilon_1'\})\cong H_*(\{f\le \lambda+3\epsilon+\epsilon_1'\}).\]  
That is, we obtain  the homological equivalence $\{f \le \lambda-3\epsilon-\epsilon_1'\}\sim \{f \le \lambda+3\epsilon+\epsilon_1'\}$,  which is a contradiction with the assumption that $\lambda$ is a homological  critical value of $f$. 

\vspace{0.1cm}

For the second statement on the local monotonicity  of isolated homological critical
values, note that
\begin{equation}\label{eq:f-fe-inclu}
\{f_\epsilon\le c\}\subset \{f\le c\}\subset\{f_\epsilon\le c+\epsilon\}\subset\{f \le c+\epsilon\}
.    
\end{equation}
A similar argument yields that there is a homological critical value of $f_\epsilon$ in $[\lambda-\epsilon,\lambda+\epsilon]$. Suppose the contrary, that there is no homological critical value of $f_\epsilon$ in $[\lambda,\lambda+\epsilon]$. 
Then, without loss of generality, we may assume that there is no homological critical value of $f_\epsilon$ in $[\lambda-\epsilon_1,\lambda+\epsilon+\epsilon_1]$. 
Then, for any $0<\epsilon'<\epsilon_1$, it is not difficult to verify that the inclusion relation 
$$\{f\le \lambda-\epsilon-\epsilon'\}\subset \{f_\epsilon\le \lambda-\epsilon'\}\subset\{f\le \lambda-\epsilon'\}\subset\{f_\epsilon \le \lambda+\epsilon-\epsilon'\}
$$
implies the homological equivalence $\{f_\epsilon\le \lambda-\epsilon'\}\sim\{f\le \lambda-\epsilon'\}$. However, since there is no homological critical value of $f_\epsilon$ in $(\lambda-\epsilon_1,\lambda+\epsilon]$, we have the homological equivalence  $\{f_\epsilon\le \lambda-\epsilon'\}\sim \{f_\epsilon\le \lambda+\epsilon'\}$. Taking $c=\lambda+\epsilon'$ in  \eqref{eq:f-fe-inclu}, we  derive the homological equivalence  $\{f_\epsilon\le \lambda+\epsilon'\}\sim \{f\le \lambda+\epsilon'\}$ in a similar manner as shown in the proof of  the first statement.  Therefore, we deduce the homological equivalence  $\{f\le \lambda-\epsilon'\}\sim \{f\le \lambda+\epsilon'\}$, which  contradicts  to the assumption that $\lambda$ is a homological  critical value of $f$.   


\vspace{0.1cm}

Finally, we focus on the critical values produced by homotopical linking.  Suppose that $ Q$ homotopically links $S$  with respect to $Q'$, where $Q\subset Q'\subset X$, $S\subset X$, and  $\min\limits_{\vec x\in S}f(\vec x)>\max\limits_{\vec x\in   Q}f(\vec x)+4\epsilon$. Then, for any continuous function  $f_\epsilon:X\to\R$ with $\|f-f_\epsilon\|_\infty<\epsilon$, $\min\limits_{\vec x\in S}f_\epsilon(\vec x)>\max\limits_{\vec x\in   Q}f_\epsilon(\vec x)$ which implies that $$\lambda_{Q,Q',S}(f_\epsilon):=
\inf\limits_{\text{continuous }\gamma:Q'\to X\text{ with }\gamma|_Q=\mathrm{id}}\;\max\limits_{\vec x\in\gamma(Q')}f_\epsilon(\vec x)$$  is a critical value of $f_\epsilon$. It is clear that $|\lambda_{Q,Q',S}(f_\epsilon)-\lambda_{Q,Q',S}(f)|<\epsilon$. Similarly, if $f(x)\le f_\epsilon(x)\le  f(x)+\epsilon$, $\forall x\in X$, then $\lambda_{Q,Q',S}(f)\le \lambda_{Q,Q',S}(f_\epsilon)\le \lambda_{Q,Q',S}(f)+\epsilon$. The proof is completed. 
\end{proof}

We should note that Theorem \ref{thm:homotopy-eigen} is a  consequence of Lemma \ref{lemma:homology-eigen-increasing} due to the fact that the eigenvalues of the 1-Laplacian are isolated. 
For the sake of completeness, we write down a brief proof below.

\begin{proof}[Proof of Theorem \ref{thm:homotopy-eigen}]
By  Theorem 1  in \cite{CSZ17}, $\Delta_1$ has finitely many eigenvalues. 
We may assume without loss of generality that $\epsilon<\frac12\min\{|\lambda'-\lambda''|:\text{different eigenvalues  }\lambda',\lambda''\text{ of }\Delta_1\}$.  Then, there is no eigenvalue of $\Delta_1$ in the set $(\lambda-\epsilon,\lambda)\cup (\lambda,\lambda+\epsilon)$. 
That is, $\lambda$ is actually an isolated homological eigenvalue of $\Delta_1$, and thus it is an isolated homological critical value of $F_1$. 


Take sufficiently small $\delta>0$ such that  $|F_{p}(\vec x)-F_1(\vec x)|<\epsilon/4$ for any $1<p<1+\delta$, and for any  $\vec x$ with $\vec x\ne \vec0$. 
By Lemma \ref{lemma:homology-eigen-increasing}, there exists a homological critical value of $F_{p}$ in $(\lambda-\epsilon,\lambda+\epsilon)$,  which is actually an eigenvalue of $\Delta_{p}$. 
The proof is completed.
\end{proof}

The next example shows that the local monotonicity doesn't hold for a non-homological critical value. 
\begin{remark}
Let $f_t(x)=(x+t)^3-\frac{t^3}{6}$. Then $f_t>f_0$ when $t>0$ and $f_t<f_0$ when $t<0$, while the unique critical value of $f_t$ is $-\frac{t^3}{6}$. This implies that a larger function may have a smaller critical value. 

In addition, the stability also  fails  for a non-homological critical value. For example, let $g_t(x)=x^3+t^2x$. Then,   $g_0$ has the unique critical value $0$,  but for  $t\ne0$, $g_t$ has no critical value. 

From these examples, we actually show that both the stability and local monotonicity do not hold in the case of non-homological critical values.
\end{remark}

\begin{defn}\label{def:homotopy-eigen-p}
We define the {\sl eigenvalues of $\Delta_p$ produced by  homotopical  linking}
 as the critical values of $F_p$  produced by  homotopical  linking.
\end{defn}

\begin{remark}
Theorem \ref{thm:homotopy-eigen} still hold if we use  eigenvalues produced by  homotopical  linking instead of homological
eigenvalue. The proof is also based on Lemma \ref{lemma:homology-eigen-increasing}.
\end{remark}

\subsection{Homological eigenvalues 
of $1$-Laplacian}
\label{sec:1-lap-simplicial-complex}

Based on the  discussions in Section \ref{sec:min-max-1-Lap}, we are able to estimate the variational eigenvalues of $\Delta_1$. To further  characterize the homological  eigenvalues of $\Delta_1$, we  borrow some ideas from PL Morse theory and discrete  Morse theory, which are shown in the following proof of Theorem \ref{thm:tri-link}.  

We first recall the simplicial complex $K_n$. Let $\mathcal{P}_2(V)=\{(A,B):A\cap B=\varnothing,A\cup B\ne\varnothing,A,B\subset V\}$ be the collection of all the pairs of disjoint subsets of $V$. 
Then, there is a natural partial order $\prec$ on $\mathcal{P}_2(V)$ defined as 
$(A,B)\prec (A',B')$ if $A\subset A'$ and $B\subset B'$. 
This partial order $\prec$ is actually  the inclusion order for set-pairs. 
The simplicial complex $K_n$ is defined as the order complex on  $\mathcal{P}_2(V)$ with respect to the inclusion order for set-pairs, i.e.,  the faces of $K_n$ refer to the inclusion chains  in $\mathcal{P}_2(V)$.  A natural geometric realization of $K_n$ is the following triangluation of the unit $l^\infty$-sphere $\{\vec x\in\R^n:\|\vec x\|_\infty=1\}$: 

For any $A\subset V$  and any permutation  $\sigma:\{1,\cdots,n\}\to \{1,\cdots,n\}$, we define the simplex
$$\triangle_{A,\sigma}=\mathrm{conv}\left(\{\vec1_A-\vec1_{V\setminus A}\}\cup\{\vec1_{A\setminus\{\sigma(1),\cdots,\sigma(i)\}}-\vec1_{(V\setminus A)\setminus\{\sigma(1),\cdots,\sigma(i)\}}:i=1,\ldots,n-1\}\right)$$ which is of dimension $(n-1)$. Then,  $\{\triangle_{A,\sigma}:A\subset V,\text{ permutation }\sigma\text{ on }V\}$ is the set of maximal simplices\footnote{We usually don't distinguish  the  simplicial complex $K_n$ and its geometric realization $|K_n|$.} of $K_n$, and we rewrite the vertices of $K_n$ as $\{\vec1_A-\vec1_B:(A,B)\in \mathcal{P}_2(V)\}$. 
 Similarly, for a given subset  $\A\subset\mathcal{P}_2(V)$, we regard $\A$ as a poset with the partial order $\prec$, i.e., for any  $(A,B),(A',B')\in\A$, $(A,B)\prec (A',B')$ if and only if  $A\subset A'$ and $B\subset B'$. 
The  subcomplex corresponding to  $\A$ is defined to be  the order complex of the  poset $(\A,\prec)$, and we work on its  natural geometric realization 
whose faces are determined by the geometric simplexes 
$\mathrm{conv}\{\vec1_{A_1}-\vec1_{B_1},\cdots,\vec1_{A_k}-\vec1_{B_k}\}$ for any chain $(A_1,B_1)
\prec\cdots\prec (A_k,B_k)$ in $(\A,\prec)$.

Given a vertex $\vec1_A-\vec1_{B}$ of $\A$, we shall use $\mathrm{star}(\vec1_A-\vec1_{B})$ to denote the \emph{closed star} of $\vec1_A-\vec1_{B}$, that is, the \textbf{closure}  of the union of the  simplices in $|K_n|$ that have $\vec1_A-\vec1_{B}$ as a vertex.  

We use $\mathrm{link}(\vec1_A-\vec1_{B})$ to denote the \emph{link} of $\vec1_A-\vec1_{B}$, that is, $\mathrm{star}(\vec1_A-\vec1_{B})\setminus \mathrm{star}^o(\vec1_A-\vec1_{B})$, where $ \mathrm{star}^o(\vec1_A-\vec1_{B})$ is just the union of the relatively open simplices in $|K_n|$ that have $\vec1_A-\vec1_{B}$ as a vertex.


\begin{proof}[Proof of Theorem \ref{thm:tri-link}]

Assume that the subcomplexes induced by the two sublevel sets $\{\vec x:F_1(\vec x)<\lambda\}$ and $\{\vec x:F_1(\vec x)\le \lambda\}$ have different homology groups. Then,  there exists a pair $(A,B)$ of disjoint subsets of $V=\{1,\cdots,n\}$ such that $F_1(\vec 1_A-\vec 1_B)=\lambda$, that is, $\lambda$ is the value of $F_1$ acting on some vertices of $K_n$. We shall prove that  $\lambda$ is a homological eigenvalue of $\Delta_1$.  Suppose the contrary, that $\lambda$ is not a homological eigenvalue of $\Delta_1$, i.e., $\lambda$ is not a homological critical value of $F_1$. 
Then, by the definition of homological  regular values, there exists $\epsilon> 0$ such that for each pair of real numbers $t_1<t_2$ on the interval $(\lambda-\epsilon,\lambda+\epsilon)$, the inclusion $\{F_1\le t_1\}\hookrightarrow \{F_1\le t_2\}$ induces isomorphisms on all homology groups. Particularly, for any  sufficiently small $\epsilon'>0$,  $\{F_1\le \lambda-\epsilon'\}\hookrightarrow \{F_1\le \lambda\}$ induces isomorphisms on all homology groups.  
It is clear that the subcomplex of $K_n$  induced by the vertices in $ \{\vec x\in\R^n:F_1(\vec x)< \lambda\}$ coincides  with the subcomplex of $K_n$  induced by the vertices in $ \{\vec x\in\R^n:F_1(\vec x)\le \lambda-\epsilon'\}$ exactly, when $\epsilon'>0$ is sufficiently small. 

Let $X_1=\{\vec x\in\R^n:\sum_{i=1}^n\deg(i)|x_i|=1\}$ and let $\widetilde{|K_n|}$ be the  
triangulation of $X_1$ whose  $(n-1)$-dimensional simplexes of $\widetilde{|K_n|}$ possess  the form 
\[\mathrm{conv}\left\{\frac{\vec1_{A_1}-\vec1_{B_1}}{\vol(A_1\cup B_1)},\cdots,\frac{\vec1_{A_n}-\vec1_{B_n}}{\vol(A_n\cup B_n)}
\right\}\]
where $(A_1,B_1)\prec \cdots\prec(A_n,B_n),\#(A_i\sqcup B_i)=i,(A_i,B_i)\in\power_2(V)$. 
Note that both $|K_n|$ and $\widetilde{|K_n|}$  are piecewise linear manifolds sharing the same simplicial complex structure, in which $\widetilde{|K_n|}$ is a triangulation of $X_1$ while $|K_n|$ is a triangulation of $X_\infty:=\{\vec x\in\R^n:\|\vec x\|_\infty=1\}$.  

Consider a map $r:X_1\to X_\infty$ defined as $r(\vec x)=\frac{\vec x}{\|\vec x\|_\infty}$. Then $r$ is a homeomorphism, and map each simplex of $\widetilde{|K_n|}$ to   a simplex of $|K_n|$ by sending each vertex $\frac{\vec1_{A}-\vec1_{B}}{\vol(A\cup B)}$ of  $\widetilde{|K_n|}$ to its corresponding vertex $\vec1_{A}-\vec1_{B}$ of $|K_n|$. 
Clearly, $r^{-1}:|K_n|\to\widetilde{|K_n|}$ is defined as  $r^{-1}(\vec x)=\frac{\vec x}{\sum_{i=1}^n \deg(i)|x_i|}$. 
Moreover, $F_1$ is piecewise linear on $X_1$, and it is actually linear on every simplex of $\widetilde{|K_n|}$. 
It follows from the zero-homogeneity of $F_1$ that $r(\{\vec x\in\widetilde{|K_n|}:F_1(\vec x)\le t\})=\{\vec x\in|K_n|:F_1(\vec x)\le t\}$ for any $t\in\R$. 

We should use the following argument:
\begin{Claim}[Proposition 2.25 in \cite{Bauer11}, see also K\"uhnel \cite{Kuhnel90} and Morozov  \cite{Morozov08}]\label{claim:PL-kuhnel}
Given a PL function $f^{PL}$ on a simplicial complex $|\K|$,  the induced subcomplex of $\K$ on $\{v\in \K_0: f^{PL}(v)\le t\}$ is homotopy equivalent  to the sublevel set $\{f^{PL}\le t\}$, where $\K_0$ is the vertex set of $\K$. 
\end{Claim}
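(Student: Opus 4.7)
The plan is to exhibit an explicit strong deformation retraction from $\{f^{PL}\le t\}$ onto the induced subcomplex $\K_{\le t}$. Work simplex by simplex and then glue. For each simplex $\sigma\in\K$ with vertex set $\sigma_0$, split $\sigma_0=\sigma_0^{\le}\sqcup \sigma_0^{>}$ according to whether $f^{PL}(v)\le t$ or $>t$, and let $\tau_\sigma\subset\sigma$ be the face spanned by $\sigma_0^{\le}$ (empty if $\sigma_0^{\le}=\varnothing$). Since $f^{PL}$ is affine on $\sigma$, the set $\sigma\cap\{f^{PL}\le t\}$ is a convex polytope containing $\tau_\sigma$, and the union $\bigcup_\sigma \tau_\sigma$ is exactly the geometric realization of $\K_{\le t}$. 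So the task reduces to contracting, inside each $\sigma\cap\{f^{PL}\le t\}$, the ``truncated portion'' onto $\tau_\sigma$.

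For a simplex $\sigma$ with both pieces nonempty, write a point as $x=\sum_{v\in\sigma_0}\lambda_v v$ and define a homotopy $H_\sigma^s(x)$, $s\in[0,1]$, by shrinking the ``high'' coordinates and redistributing their mass proportionally to the existing ``low'' coordinates: set the new high coefficients to $(1-s)\lambda_v$ for $v\in\sigma_0^{>}$, and increase those for $v\in\sigma_0^{\le}$ proportionally to their original values so that barycentric coordinates still sum to $1$; if $\sum_{v\in\sigma_0^{\le}}\lambda_v=0$ initially, redistribute uniformly to $\sigma_0^{\le}$. Affinity of $f^{PL}$ on $\sigma$ guarantees that the value of $f^{PL}$ at $H_\sigma^s(x)$ is a convex combination of a number $\le t$ (the contribution from $\sigma_0^{\le}$) and a nonincreasing-in-$s$ quantity, so the homotopy remains inside $\{f^{PL}\le t\}$ and reaches $\tau_\sigma$ at $s=1$. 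Simplices with $\sigma_0^{>}=\varnothing$ are fixed pointwise, and simplices with $\sigma_0^{\le}=\varnothing$ do not meet $\{f^{PL}\le t\}$ except along lower-dimensional faces already handled.

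The crucial step, and the main obstacle, is the compatibility of the $H_\sigma^s$ across common faces: if $\sigma'\prec \sigma$, the partition on $\sigma'$ is the restriction of the partition on $\sigma$, and the barycentric-coordinate rule I described depends only on $(\{\lambda_v\}_v,\sigma_0^{\le},\sigma_0^{>})$, hence restricts correctly. This lets the simplex-wise homotopies assemble into a global continuous strong deformation retraction of $\{f^{PL}\le t\}$ onto $\K_{\le t}$, giving the claimed homotopy equivalence. A cleaner alternative I would try is Forman's discrete Morse theory: fixing a total order on $\K_0$ that refines ``$f^{PL}(v)>t$ before $f^{PL}(v)\le t$'', pair each simplex $\sigma$ with $\sigma_0^{>}\ne\varnothing$ to $\sigma\setminus\{v^*(\sigma)\}$ where $v^*(\sigma)$ is the order-maximal vertex with $f^{PL}(v^*)>t$; the resulting matching is acyclic by the linear order, its unmatched (critical) simplices are exactly the faces of $\K_{\le t}$, and Forman's collapsing theorem then yields the homotopy equivalence directly.
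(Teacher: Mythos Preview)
The paper does not prove this claim; it quotes it from the literature (Bauer, K\"uhnel, Morozov) and uses it as a black box. So there is no ``paper's proof'' to compare against, and your proposal should be judged on its own merits.

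Your first approach --- the simplex-by-simplex deformation retraction --- is essentially correct and is indeed the standard argument. One small cleanup: the case $\sum_{v\in\sigma_0^{\le}}\lambda_v=0$ that you single out for ``uniform redistribution'' is vacuous and should simply be dropped. If all low barycentric coordinates vanish, then $x$ lies in the face spanned by $\sigma_0^{>}$, hence $f^{PL}(x)>t$ by affinity, so $x\notin\{f^{PL}\le t\}$ to begin with. This matters because your uniform-redistribution rule \emph{would} break face-compatibility (it depends on $\sigma_0^{\le}$, which changes when you pass to a face), so it is good that the case never occurs. With that remark, your compatibility check and your verification that $f^{PL}(H_\sigma^s(x))\le t$ (which reduces to $f^{PL}(H_\sigma^s(x))=(1-s)f^{PL}(x)+s\cdot(\text{low average})\le t$) are fine, and the glued homotopy gives the desired strong deformation retraction.

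Your second approach via discrete Morse theory, however, is flawed as written. The rule ``pair each $\sigma$ with $\sigma_0^{>}\ne\varnothing$ to $\sigma\setminus\{v^*(\sigma)\}$'' does not define a matching: a given simplex can appear as $\sigma\setminus\{v^*(\sigma)\}$ for several different $\sigma$, and a high simplex $\tau$ with $\tau_0^{>}\ne\varnothing$ is simultaneously sent \emph{down} to $\tau\setminus\{v^*(\tau)\}$ and received \emph{up} from $\tau\cup\{w\}$ for various $w$. Concretely, take the $2$-simplex on $\{a,b,c\}$ with $a,b$ high, $c$ low, and order $a<b$: your rule sends both $\{a,c\}$ and $\{b,c\}$ to $\{c\}$, so $\{c\}$ (which should be critical, lying in $\K_{\le t}$) is matched twice. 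Moreover, $\{a,c\}$ is both sent down to $\{c\}$ and received up from $\{a,b,c\}$. A correct acyclic matching with critical cells exactly $\K_{\le t}$ does exist, but it requires a more careful rule (for each $\sigma$ with a high vertex, one must decide consistently whether to add or delete the extremal high vertex so that the two operations are mutual inverses); your stated rule does not do this. Since your first argument already suffices, I would simply omit the discrete Morse alternative or rewrite it with a valid matching.
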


Thus, we can apply K\"uhnel's theorem (i.e., Claim \ref{claim:PL-kuhnel}) to derive that the subcomplex of $K_n$  induced by the vertices in $ \{\vec x\in\widetilde{|K_n|}:F_1(\vec x)\le \lambda-\epsilon'\}$ is homotopy equivalent to the sublevel set $ \{\vec x\in\widetilde{|K_n|}:F_1(\vec x)\le \lambda-\epsilon'\}$. Furthermore, we have the commutative 
diagram: 
$$\xymatrix{\{\vec x\in\widetilde{|K_n|}:F_1(\vec x)\le \lambda-\epsilon'\}\ar[rr]^{h}&& \K|_{\{\vec x\in\widetilde{|K_n|}:F_1(\vec x)\le \lambda-\epsilon'\}}\ar[d]^{r}\\
\{\vec x\in|K_n|:F_1(\vec x)\le \lambda-\epsilon'\} \ar[rr]^{h'}\ar[u]^{r^{-1}}&&\K|_{\{\vec x\in|K_n|:F_1(\vec x)\le \lambda-\epsilon'\}}} $$
where $h$ is a continuous map such that $h\circ i\simeq id$ and $i\circ h\simeq id$, and \[i:\K|_{\{\vec x\in\widetilde{|K_n|}:F_1(\vec x)\le \lambda-\epsilon'\}}\hookrightarrow \{\vec x\in\widetilde{|K_n|}:F_1(\vec x)\le \lambda-\epsilon'\}\] is the inclusion map. 
Since $r$ is a homeomorphism, we have $h'\circ i'=(r\circ h\circ r^{-1})\circ(r\circ i\circ r^{-1}) = r\circ h\circ  i\circ r^{-1} \simeq id$, and similarly, there holds  $i'\circ h'\simeq id$, where 
\[i':\K|_{\{\vec x\in|K_n|:F_1(\vec x)\le \lambda-\epsilon'\}}\hookrightarrow\{\vec x\in|K_n|:F_1(\vec x)\le \lambda-\epsilon'\}\] is the inclusion map. 
Therefore, the subcomplex $\K|_{\{\vec x\in|K_n|:F_1(\vec x)\le \lambda-\epsilon'\}}$ is homotopy equivalent to the sublevel set $\{\vec x\in|K_n|:F_1(\vec x)\le \lambda-\epsilon'\}$.

In consequence, $$H_*(\K|_{F_1<\lambda})=H_*(\K|_{F_1\le\lambda-\epsilon'})=H_*(\{F_1\le\lambda-\epsilon'\})=H_*( \{F_1\le\lambda\})=H_*( \K|_{F_1\le\lambda})$$
which is a contradiction. In the above equalities,  we use  $\K|_{F_1<\lambda}$  to denote the subcomplex of $K_n$  induced by  the vertices in the lower level set  $\{F_1<\lambda\}$. 


For the converse part, assume that $\lambda$ is a homological eigenvalue. Then, it follows  from K\"uhnel’s Theorem  and  the definition of homological eigenvalue  that the inclusion $\K|_{F_1<\lambda}\hookrightarrow \K|_{F_1\le \lambda}$ does not induce isomorphisms on 
 homology groups. 

Now, we move on to the local case. 
Assume that 
there exists 
$A\ne\varnothing$ with $F_1(\vec 1_A)=\lambda$ and $F_1(\vec v)\ne\lambda$ for any vertex $\vec v$ in $ \mathrm{link}(\vec 1_A)$. Then locally,
the subcomplex of $K_n$ induced by the vertices in $ \{\vec x\in \mathrm{star}(\vec 1_A):F_1(\vec x)\le \lambda\}$ is a cone, and therefore it is contractible. It is easy to see that in this situation, the subcomplex of $K_n$  induced by the vertices in $ \{\vec x\in \mathrm{star}(\vec 1_A):F_1(\vec x)< \lambda\}$ coincides with the subcomplex of $K_n$  induced by the vertices in $ \{\vec x\in \mathrm{link}(\vec 1_A):F_1(\vec x)< \lambda\}$. 

Thus, if the subcomplex of $K_n$  induced by the vertices in $ \{\vec x\in \mathrm{link}(\vec 1_A):F_1(\vec x)< \lambda\}$ has non-vanishing reduced homology, 
then $$H_*(\K|_{\{F_1<\lambda\}\cap\, \mathrm{star}(\vec 1_A)})\ne H_*(\K|_{\{F_1\le\lambda\}\cap\, \mathrm{star}(\vec 1_A)})$$
and consequently, it is easy to see that $H_*(\K|_{F_1<\lambda})\ne H_*( \K|_{F_1\le\lambda})$, which implies that $\lambda$ is a homological  eigenvalue of $\Delta_1$.



Incidentally,  $\vec1_A$ is a PL critical point and $\lambda$ is the corresponding PL critical value of $F_1$  in the sense of Brehm and K\"uhnel
\cite{Brehm-Kuhnel} 
(or in the sense of Edelsbrunner \cite{EH10,EHNP03}). 
\end{proof}

\begin{remark}
A similar idea has been used to establish a relationship between Forman's discrete Morse
 theory and  Edelsbrunner's PL Morse theory  in a previous work of the author  \cite{JostZhang-Morse}. Moreover, we have studied the combinatorial  structure of the simplicial complex $K_n$  in another  work  \cite{JZ21}. 
\end{remark}

\subsection{Monotonicity of some functions   involving   eigenvalues of $\Delta_p$ with respect to $p$}
\label{sec:monotonicity}
\begin{proof}[Proof of Theorem \ref{thm:most}]
Given $p,q\ge 1$, we define an odd continuous map  $\Phi_{q/p}:\R^n\to\R^n$ by $$\Phi_{q/p}(\vec x)=(|x_1|^{\frac qp}\mathrm{sign}(x_1),\cdots,|x_n|^{\frac qp}\mathrm{sign}(x_n)).$$ 

\begin{Claim}\label{claim:1}
For any $1\le p\le q$, for any $\vec x\in\R^n$,
\begin{equation}\label{eq:p<q2}
F_p(\Phi_{q/p}(\vec x))\ge 2^{p-q}F_q(\vec x).    
\end{equation}
In addition, if $q<p$, then for any $\vec x$ with $F_q(\vec x)>0$, the inequality in \eqref{eq:p<q2} is strict. 
\end{Claim}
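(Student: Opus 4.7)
The first key observation is that the map $\Phi_{q/p}$ is chosen precisely so that the denominators of $F_p(\Phi_{q/p}(\vec x))$ and $F_q(\vec x)$ coincide: indeed $|\Phi_{q/p}(\vec x)_i|^p = |x_i|^q$, so both denominators equal $\sum_{i \in V} \deg(i) |x_i|^q$. Hence the claim reduces to the edge-wise numerator inequality
\[
\bigl| |x_i|^{q/p}\sgn(x_i) - |x_j|^{q/p}\sgn(x_j) \bigr|^p \;\ge\; 2^{p-q}\, |x_i - x_j|^q
\]
for every edge $\{i,j\} \in E$; summing then yields the stated inequality for $F_p$ versus $F_q$.

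Setting $r := q/p \ge 1$ and taking $p$-th roots, the edge-wise bound reduces to the scalar inequality
\[
\bigl| |a|^r \sgn(a) - |b|^r \sgn(b) \bigr| \;\ge\; 2^{1-r}\, |a-b|^r \qquad \text{for all } a,b \in \R.
\]
I would handle this by splitting into two cases according to the signs of $a$ and $b$. In the \emph{same-sign case}, one may assume $a \ge b \ge 0$, and the inequality becomes $a^r - b^r \ge 2^{1-r}(a-b)^r$; this follows from the elementary superadditivity $(u+v)^r \ge u^r + v^r$ for $u,v \ge 0, r\ge 1$ applied with $u = a-b, v = b$, which already gives $a^r - b^r \ge (a-b)^r \ge 2^{1-r}(a-b)^r$. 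In the \emph{opposite-sign case}, say $a > 0 > b$, we have $|a-b| = a+|b|$ and the left side equals $a^r + |b|^r$; the inequality $a^r + |b|^r \ge 2^{1-r}(a+|b|)^r$ is exactly convexity of $t \mapsto t^r$ (power-mean inequality), as $\tfrac12(a^r + |b|^r) \ge \bigl(\tfrac{a+|b|}{2}\bigr)^r$.

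For the strictness statement (reading it as $q > p$, i.e.\ $r > 1$): if $F_q(\vec x) > 0$ then some edge $\{i,j\}$ has $x_i \ne x_j$. On that edge, in the same-sign case $a^r - b^r \ge (a-b)^r > 2^{1-r}(a-b)^r$ strictly because $r > 1$ gives $2^{1-r} < 1$; in the opposite-sign case the convexity of $t \mapsto t^r$ is strict for $r > 1$ unless $a = |b|$, and even then one still has $a^r + |b|^r = 2a^r > 2^{1-r}(2a)^r = 2a^r$... wait, equality; but then $|a-b| = 2a > 0$ and we can fall back on the fact that at least the same-sign subcase (if it occurs) is strict, or observe that $2a^r = 2a^r$ is actually equality only at this threshold and still $\ge 2^{1-r}|a-b|^r$ with equality — so strictness in the summed inequality needs only one strict edge, which the same-sign analysis guarantees whenever some edge has nonzero contribution with vertices of matching sign. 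The main obstacle is therefore keeping careful track of the equality cases in the opposite-sign case; I would verify that whenever $F_q(\vec x) > 0$, at least one contributing edge falls in a strict subcase, which together with non-negativity of all other edge terms yields strict inequality after summation.
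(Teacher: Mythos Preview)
Your proof of the main inequality is correct and takes a more direct route than the paper. Both arguments begin with the observation that the denominators of $F_p(\Phi_{q/p}(\vec x))$ and $F_q(\vec x)$ coincide and then compare numerators edge by edge, but the paper reaches the scalar bound through a three-step chain: first the appendix lemma $\bigl||b|^{t}\sgn(b)-|a|^{t}\sgn(a)\bigr|\ge|b-a|\bigl(\tfrac{|b|^{t}+|a|^{t}}{2}\bigr)^{1-1/t}$ with $t=q/p$, then the power-mean inequality to pass from $\tfrac{|a|^{t}+|b|^{t}}{2}$ to $\bigl(\tfrac{|a|+|b|}{2}\bigr)^{t}$, and finally $|a|+|b|\ge|a-b|$. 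Your two-case argument (superadditivity of $t\mapsto t^r$ in the same-sign case, convexity in the opposite-sign case) lands on the target $2^{1-r}|a-b|^r$ in one stroke and avoids the auxiliary lemma altogether.

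On the strictness clause your hesitation is warranted, and the patch you sketch does not go through. Reading the hypothesis as $q>p$ (the reading the paper's own proof text adopts), the scalar inequality $\bigl||a|^r\sgn(a)-|b|^r\sgn(b)\bigr|\ge 2^{1-r}|a-b|^r$ becomes an equality precisely when $a=-b$, as you noticed. This cannot be repaired by summation: on the graph with a single edge $\{1,2\}$ and $\vec x=(1,-1)$ one has $\Phi_{q/p}(\vec x)=(1,-1)$ and hence $F_p(\Phi_{q/p}(\vec x))=2^{p-1}=2^{p-q}F_q(\vec x)$ with $F_q(\vec x)=2^{q-1}>0$, so the inequality \eqref{eq:p<q2} is not strict. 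The paper's own proof asserts that its lemma is strict whenever $a\ne b$ and $q>p$, but the case $a=-b$ gives equality there too, so this gap is shared with the paper. (As literally written, ``$q<p$'' under the standing hypothesis $p\le q$ makes the strictness clause vacuous.)
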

\textbf{Proof}. Denote by $\|\vec x\|_p=(\sum_{i=1}^n\deg(i)|x_i|^p)^{\frac1p}$, $\forall p\ge 1$. It is clear that $\|\Phi_{q/p}(\vec x)\|_p^p=\|\vec x\|_q^q$. Denote by $\mathrm{TV}_p(\vec x)=\sum_{\{i,j\}\in E}|x_i-x_j|^p$. Then 
\begin{align*}
\mathrm{TV}_p(\Phi_{q/p}(\vec x))&=\sum_{\{i,j\}\in E}\left||x_i|^{\frac qp}\mathrm{sign}(x_i)-|x_j|^{\frac qp}\mathrm{sign}(x_j)\right|^p
\\&\ge\sum_{\{i,j\}\in E}|x_i-x_j|^p\left(\left(\frac{|x_i|^{\frac qp}+|x_j|^{\frac qp}}{2}\right)^{1-\frac pq}\right)^p
\\&\ge \sum_{\{i,j\}\in E}|x_i-x_j|^p\left(\left(\frac{|x_i|+|x_j|}{2}\right)^{\frac qp(1-\frac pq)}\right)^p
\\&\ge \sum_{\{i,j\}\in E}|x_i-x_j|^p\left(\frac{|x_i-x_j|}{2}\right)^{q-p}
\\&= 2^{p-q}\sum_{\{i,j\}\in E}|x_i-x_j|^q=2^{p-q}\mathrm{TV}_q(\vec x) 
\end{align*}
where we used the inequality (see Lemma \ref{lem:elementary-inequality}) 
$$\left||b|^{\frac qp}\mathrm{sign}(b)-|a|^{\frac qp}\mathrm{sign}(a)\right|
\ge|b-a|\left(\frac{|b|^{\frac qp}+|a|^{\frac qp}}{2}\right)^{1-\frac pq}.$$
The above inequality is strict whenever $a\ne b$ and $q>p$. This implies that $\mathrm{TV}_p(\Phi_{q/p}(\vec x))>2^{p-q}\mathrm{TV}_q(\vec x)$ if $x_i\ne x_j$ for some  $\{i,j\}\in E$. 

Therefore,
$$F_p(\Phi_{q/p}(\vec x))=\frac{\mathrm{TV}_p(\Phi_{q/p}(\vec x))}{\|\Phi_{q/p}(\vec x)\|_p^p}\ge \frac{2^{p-q}\mathrm{TV}_q(\vec x)}{\|\vec x\|_q^q}=2^{p-q}F_q(\vec x),$$
and the inequality is strict whenever $q>p$ and $F_q(\vec x)>0$. 

The proof of the claim is completed.

Similarly, we have another inequality which has been  essentially shown in \cite{Amghibech}:

\begin{Claim}\label{claim:2} For any $1\le p\le q$, for any $\vec x\in\R^n$,
$$F_p(\Phi_{q/p}(\vec x))\le 2^{\frac pq-1}\left(\frac qp\right)^pF_q(\vec x)^{\frac pq}$$
or equivalently, \begin{equation}\label{eq:p<q1/p}
p(2F_p\circ\Phi_{q/p})^{\frac1p}\le q(2F_q)^{\frac1q}.
\end{equation}
Similarly,  the inequality is strict whenever $q>p$ and $F_q(\vec x)>0$. 
\end{Claim}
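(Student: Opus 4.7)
The plan is to mirror the proof of Claim \ref{claim:1}, but using the matching upper pointwise inequality from Lemma \ref{lem:elementary-inequality} (essentially due to \cite{Amghibech}) in place of the lower one, and coupling it with H\"older's inequality rather than the trivial edgewise summation.

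First I would invoke the pointwise upper bound
\[
\left||b|^{q/p}\mathrm{sign}(b)-|a|^{q/p}\mathrm{sign}(a)\right|\le \frac{q}{p}|b-a|\left(\frac{|a|^{q/p}+|b|^{q/p}}{2}\right)^{1-p/q},\qquad\forall a,b\in\R,
\]
valid for $q\ge p\ge 1$ and strict whenever $a\ne b$ and $q>p$. Raising this to the $p$-th power and summing over edges produces an expression of the form $\sum_{\{i,j\}\in E}a_{ij}b_{ij}$ with $a_{ij}=|x_i-x_j|^p$ and $b_{ij}=\left(\frac{|x_i|^{q/p}+|x_j|^{q/p}}{2}\right)^{p(1-p/q)}$.

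Next I would apply H\"older's inequality with conjugate exponents $q/p$ and $q/(q-p)$, chosen precisely so that $a_{ij}^{q/p}=|x_i-x_j|^q$ and $b_{ij}^{q/(q-p)}=\left(\frac{|x_i|^{q/p}+|x_j|^{q/p}}{2}\right)^{p}$. This yields
\[
\mathrm{TV}_p(\Phi_{q/p}(\vec x))\le \left(\frac{q}{p}\right)^p \mathrm{TV}_q(\vec x)^{p/q}\left(\sum_{\{i,j\}\in E}\left(\frac{|x_i|^{q/p}+|x_j|^{q/p}}{2}\right)^{p}\right)^{(q-p)/q}.
\]
Since $p\ge 1$, convexity of $t\mapsto t^p$ gives $((u+v)/2)^p\le (u^p+v^p)/2$, so the inner sum is at most $\frac12\sum_{\{i,j\}\in E}(|x_i|^q+|x_j|^q)=\frac12\|\vec x\|_q^q$. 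Dividing the assembled estimate by $\|\Phi_{q/p}(\vec x)\|_p^p=\|\vec x\|_q^q$ and using $(q-p)/q=1-p/q$ gives exactly $F_p(\Phi_{q/p}(\vec x))\le (q/p)^p\,2^{p/q-1}F_q(\vec x)^{p/q}$; taking $p$-th roots and multiplying by $p$ delivers the equivalent form \eqref{eq:p<q1/p}.

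For strictness, $F_q(\vec x)>0$ forces $x_i\ne x_j$ on at least one edge, where the pointwise inequality is strict once $q>p$; this strictness propagates through H\"older and the convexity step to the assembled estimate. The main obstacle is the calculus lemma supplying the pointwise upper bound: unlike the lower bound of Claim \ref{claim:1}, a naive mean value theorem argument is insufficient, since the intermediate point $\xi$ produced by MVT satisfies $|\xi|^{(q-p)/p}\le\max(|a|,|b|)^{(q-p)/p}$ but need not be dominated by the $(q/p)$-power mean of $|a|$ and $|b|$, and the opposite-sign case requires separate handling. Fortunately this is precisely Lemma \ref{lem:elementary-inequality}, so once it is invoked the remainder of the proof is routine H\"older-plus-convexity bookkeeping.
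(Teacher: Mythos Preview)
Your proof is correct and follows essentially the approach the paper has in mind: the paper does not spell out a proof of Claim~\ref{claim:2} but simply states that it ``has been essentially shown in \cite{Amghibech}'', and your combination of the Amghibech pointwise upper bound with H\"older (exponents $q/p$ and $q/(q-p)$) and the convexity step $((u+v)/2)^p\le (u^p+v^p)/2$ is exactly the standard derivation. One small correction: in your last paragraph you write ``this is precisely Lemma~\ref{lem:elementary-inequality}'', but Lemma~\ref{lem:elementary-inequality} is the \emph{lower} bound; the upper bound you need is the content of the Remark immediately following it (attributed there to \cite{Amghibech}).
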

To prove Theorem \ref{thm:most},  we also need the following results:

\begin{Claim}\label{claim:homology-critical-coin} The homological critical values of $F_p$ coincide with that of $F_p\circ\Phi_{q/p}$.
\end{Claim}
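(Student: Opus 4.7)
The plan is to exploit the fact that $\Phi_{q/p}$ is a self-homeomorphism of $\R^n$. By construction $\Phi_{q/p}\circ\Phi_{p/q}=\Phi_1=\mathrm{id}$, so $\Phi_{q/p}$ is a continuous bijection whose continuous inverse is $\Phi_{p/q}$. It restricts to a homeomorphism of $\R^n\setminus\{\vec{0}\}$ and of the unit $l^\infty$-sphere $\{\vec x\in\R^n:\|\vec x\|_\infty=1\}$, because $\|\Phi_{q/p}(\vec x)\|_\infty=\|\vec x\|_\infty^{q/p}$. Thus $\Phi_{q/p}$ behaves well on whichever ambient space one prefers for defining the homological eigenvalues of $\Delta_p$.

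Next I would observe that for every real $t$,
\begin{equation*}
\{F_p\circ\Phi_{q/p}\le t\}\;=\;\Phi_{q/p}^{-1}\bigl(\{F_p\le t\}\bigr),
\end{equation*}
so $\Phi_{q/p}$ maps the sublevel sets of $F_p\circ\Phi_{q/p}$ homeomorphically onto those of $F_p$ at every height. Moreover these homeomorphisms are compatible with sublevel inclusions: for any $t_1<t_2$ the square
\begin{equation*}
\xymatrix{\{F_p\circ\Phi_{q/p}\le t_1\} \ar@{^{(}->}[r] \ar[d]_{\Phi_{q/p}} & \{F_p\circ\Phi_{q/p}\le t_2\} \ar[d]^{\Phi_{q/p}} \\ \{F_p\le t_1\} \ar@{^{(}->}[r] & \{F_p\le t_2\}}
\end{equation*}
commutes, and its vertical arrows are homeomorphisms. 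Passing to singular homology, the vertical arrows become isomorphisms, so the top horizontal inclusion induces isomorphisms on all homology groups if and only if the bottom one does.

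By Definition \ref{defn:homological-critical}, a real number $c$ is a homological regular value of a function $F$ precisely when the inclusion of sublevel sets at heights $t_1<t_2$ induces isomorphisms on every homology group for all $t_1,t_2$ in some neighbourhood of $c$. The previous paragraph shows that this condition for $F_p\circ\Phi_{q/p}$ is equivalent to the same condition for $F_p$, so $c$ is homologically critical for one iff it is for the other, which is exactly Claim \ref{claim:homology-critical-coin}. There is no genuine obstacle here: the assertion is the formal fact that pre-composing a function with a self-homeomorphism of the domain preserves the homotopy type of every sublevel set. This observation will also be what subsequently lets one transport Claims \ref{claim:1} and \ref{claim:2} into comparison statements between homological eigenvalues of $\Delta_p$ and $\Delta_q$.
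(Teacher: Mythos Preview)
Your proof is correct and follows the same approach as the paper, which simply states: ``Since $\Phi_{q/p}$ is a homeomorphism, it is clear that for any $c\in\R$, the topology of $\{F_p\le c\}$ and the topology of $\{F_p\circ\Phi_{q/p}\le c\}$ are the same.'' You have unpacked exactly what the paper leaves implicit, namely that the homeomorphism intertwines the sublevel filtrations and hence preserves the isomorphism-on-homology condition in Definition~\ref{defn:homological-critical}.
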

Proof: Since $\Phi_{q/p}$ is a homeomorphism, it is clear that for any $c\in\R$, the topology of $\{F_p\le c\}$ and the topology of $\{F_p\circ\Phi_{q/p}\le c\}$ are the same.

\begin{Claim}\label{claim:homotopy-critical-coin} The  critical values of $F_p$ produced by homotopical linking  coincide with that of $F_p\circ\Phi_{q/p}$.
\end{Claim}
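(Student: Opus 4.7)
The plan is to mirror the proof of Claim \ref{claim:homology-critical-coin}, replacing ``topology of sublevel sets'' by ``structure of admissible homotopical linking triples and the minimax taken over them.'' The essential point is that $\Phi_{q/p}:\R^n\to\R^n$ is an odd homeomorphism whose inverse is $\Phi_{p/q}$, and it restricts to a homeomorphism of any ambient space on which $F_p$ is conventionally considered (the set $\R^n\setminus\{\vec 0\}$ and the unit $\ell^\infty$-sphere $\{\vec x:\|\vec x\|_\infty=1\}$ are both $\Phi_{q/p}$-invariant, since $|x_i|^{q/p}\mathrm{sign}(x_i)$ preserves which coordinate is maximal in absolute value and whether the vector is nonzero).

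First, I would show that the push-forward $(Q,Q',S)\mapsto (\tilde Q,\tilde Q',\tilde S):=(\Phi_{p/q}(Q),\Phi_{p/q}(Q'),\Phi_{p/q}(S))$ is a bijection on linking triples. The inclusion $\tilde Q\subset\tilde Q'$ is automatic, and I claim $\tilde Q$ homotopically links $\tilde S$ with respect to $\tilde Q'$. Indeed, given any continuous $\tilde\gamma:\tilde Q'\to X$ with $\tilde\gamma|_{\tilde Q}=\mathrm{id}$, set $\gamma:=\Phi_{q/p}\circ\tilde\gamma\circ\Phi_{p/q}:Q'\to X$; then $\gamma$ is continuous with $\gamma|_Q=\mathrm{id}$, so the linking hypothesis on $(Q,Q',S)$ yields some $\vec y\in Q'$ with $\gamma(\vec y)\in S$, which translates into $\tilde\gamma(\Phi_{p/q}(\vec y))\in\tilde S$, that is, $\tilde\gamma(\tilde Q')\cap\tilde S\ne\varnothing$. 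The separation condition passes across as well: since $F_p\circ\Phi_{q/p}(\vec x)=F_p(\Phi_{q/p}(\vec x))$, one has $\min_{\tilde S}F_p\circ\Phi_{q/p}=\min_S F_p>\max_Q F_p=\max_{\tilde Q}F_p\circ\Phi_{q/p}$.

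Second, I would verify that the associated minimax value is preserved. The assignment $\gamma\mapsto\tilde\gamma:=\Phi_{p/q}\circ\gamma\circ\Phi_{q/p}$ is a bijection between the admissible deformations $\{\gamma:Q'\to X\mid\gamma|_Q=\mathrm{id}\}$ and $\{\tilde\gamma:\tilde Q'\to X\mid\tilde\gamma|_{\tilde Q}=\mathrm{id}\}$, with inverse given by the reverse conjugation. For corresponding pairs,
\[
\max_{\vec y\in\gamma(Q')}F_p(\vec y)=\max_{\vec x\in\tilde\gamma(\tilde Q')}F_p(\Phi_{q/p}(\vec x))=\max_{\vec x\in\tilde\gamma(\tilde Q')}(F_p\circ\Phi_{q/p})(\vec x),
\]
so taking infima gives
\[
\inf_{\gamma}\max_{\gamma(Q')}F_p=\inf_{\tilde\gamma}\max_{\tilde\gamma(\tilde Q')}(F_p\circ\Phi_{q/p}).
\]
Hence every critical value of $F_p$ produced by homotopical linking via $(Q,Q',S)$ is a critical value of $F_p\circ\Phi_{q/p}$ produced by homotopical linking via $(\tilde Q,\tilde Q',\tilde S)$. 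The reverse inclusion follows by swapping the roles of $p$ and $q$ (equivalently, by applying the same argument with $\Phi_{q/p}$ replaced by $\Phi_{p/q}$).

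No serious obstacle is anticipated: once the ``conjugation by $\Phi_{q/p}$'' bijection on admissible deformations is isolated, both the verification that linking is preserved and the identity of the two minimax values are formal. The only point requiring mild attention is to keep the ambient space $X$ consistent with the domain on which $\Phi_{q/p}$ acts homeomorphically, but this is harmless in the settings relevant to us.
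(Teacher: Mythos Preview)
Your proposal is correct and follows essentially the same route as the paper: both transport the linking triple $(Q,Q',S)$ by the homeomorphism $\Phi_{q/p}^{-1}=\Phi_{p/q}$, check that linking and the separation inequality are preserved, and then identify the two minimax values via the conjugation bijection $\gamma\mapsto\Phi_{p/q}\circ\gamma\circ\Phi_{q/p}$ on admissible deformations. Your write-up is in fact a bit more explicit than the paper's in spelling out why linking transfers and why the infima agree.
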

Proof: Let $Q\subset Q'\subset\R^n\setminus\{\vec0\}$ and $S\subset \R^n\setminus\{\vec0\}$ be such that $ Q$ homotopically links $S$  with respect to $Q'$, where $Q,Q',S$ are  
compact subsets. 
Suppose   $\min\limits_{\vec x\in S}F_p(\vec x)>\max\limits_{\vec x\in   Q}F_p(\vec x)$. Then, for any $q\ge 1$,  $\Phi_{q/p}^{-1}(Q)\subset \Phi_{q/p}^{-1}(Q')\subset\R^n\setminus\{\vec0\}$,  $\Phi_{q/p}^{-1}(S)\subset \R^n\setminus\{\vec0\}$, and $\Phi_{q/p}^{-1}(Q)$ homotopically links $\Phi_{q/p}^{-1}(S)$  with respect to $\Phi_{q/p}^{-1}(Q')$. Clearly, $\min\limits_{\vec x\in \Phi_{q/p}^{-1}(S)}F_p\circ\Phi_{q/p}(\vec x)=\min\limits_{\vec x\in S}F_p(\vec x)>\max\limits_{\vec x\in   Q}F_p(\vec x)=\max\limits_{\vec x\in   \Phi_{q/p}^{-1}(Q)}F_p\circ\Phi_{q/p}(\vec x)$. In consequence, 
$$\inf\limits_{\substack{\text{continuous }\tilde{\gamma}:\Phi_{q/p}^{-1}(Q')\to \R^n\setminus\{\vec0\}\\
\text{ with }\tilde{\gamma}|_Q=\mathrm{id}}}\max\limits_{\vec x\in\tilde{\gamma}(\Phi_{q/p}^{-1}(Q'))}F_p\circ\Phi_{q/p}(\vec x)=\inf\limits_{\substack{\text{continuous }\gamma:Q'\to \R^n\setminus\{\vec0\}\\
\text{ with }\gamma|_Q=\mathrm{id}}}\max\limits_{\vec x\in\gamma(Q')}F_p(\vec x)$$
indicates a  critical value produced by homotopical linking for both $F_p\circ\Phi_{q/p}$ and  $F_p$.

\begin{Claim}\label{claim:minmax-critical-coin} The min-max (variational) critical values of $F_p$ coincide with that of $F_p\circ\Phi_{q/p}$.
\end{Claim}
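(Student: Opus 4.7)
The plan is to exploit the fact that $\Phi_{q/p}:\R^n\to\R^n$ is an odd homeomorphism with $\Phi_{q/p}(\vec 0)=\vec 0$, so its restriction is an odd homeomorphism of $\R^n\setminus\{\vec 0\}$ onto itself. Since the min-max eigenvalues are defined as an inf-sup over centrally symmetric compact sets whose topological index (Yang, Krasnoselskii, or $\gamma^+$) exceeds a given threshold, everything will follow once I verify that these indices are invariant under an odd homeomorphism.

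First I would record that $\Phi_{q/p}(-\vec x)=-\Phi_{q/p}(\vec x)$, so if $S\subset\R^n\setminus\{\vec 0\}$ is centrally symmetric and compact, then so is $\Phi_{q/p}(S)$. Next I would verify the three equalities
\[
\gamma^-(\Phi_{q/p}(S))=\gamma^-(S),\qquad \gamma^+(\Phi_{q/p}(S))=\gamma^+(S),\qquad \gamma(\Phi_{q/p}(S))=\gamma(S).
\]
For $\gamma^-$ and $\gamma^+$ this is immediate from the definitions in Definition \ref{def:genus} and Definition \ref{def:genus+}: any odd continuous map $\varphi:S\to\mathbb{S}^{k-1}$ yields $\varphi\circ\Phi_{q/p}^{-1}:\Phi_{q/p}(S)\to\mathbb{S}^{k-1}$ (still odd and continuous), and conversely, giving the same minimal/maximal $k$. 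For the Yang index, $\Phi_{q/p}$ is an equivariant homeomorphism with respect to the antipodal action, so it induces an isomorphism $(\Phi_{q/p})_*:H_*(S,-)\cong H_*(\Phi_{q/p}(S),-)$ that commutes with the index homomorphism $\nu_*$; therefore the two symmetric homology groups vanish (under $\nu_*$) in the same degrees, giving $\gamma(\Phi_{q/p}(S))=\gamma(S)$.

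With this invariance in hand, the claim follows by the simple substitution $S=\Phi_{q/p}(T)$:
\[
\lambda_k(F_p)=\inf_{\gamma(S)\ge k}\sup_{\vec x\in S}F_p(\vec x)=\inf_{\gamma(\Phi_{q/p}(T))\ge k}\sup_{\vec y\in T}F_p(\Phi_{q/p}(\vec y))=\inf_{\gamma(T)\ge k}\sup_{\vec y\in T}(F_p\circ\Phi_{q/p})(\vec y),
\]
which is exactly $\lambda_k(F_p\circ\Phi_{q/p})$. The same substitution works verbatim with $\gamma^\pm$ in place of $\gamma$, giving the statement for all three families of min-max values.

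The main technical point — really the only nontrivial step — is the Yang-index invariance, since it involves checking that an odd homeomorphism respects the symmetric chain subcomplex and the inductive definition of $\nu$. Everything else (the change-of-variable identity and the $\gamma^\pm$ invariance) is formal. Having dispatched this step, Claim \ref{claim:minmax-critical-coin} combines with Claims \ref{claim:1}, \ref{claim:2}, \ref{claim:homology-critical-coin}, and \ref{claim:homotopy-critical-coin} to give the monotonicity asserted in Theorem \ref{thm:most}: applying the sandwich
\[
2^{p-q}F_q(\vec x)\le F_p(\Phi_{q/p}(\vec x))\le 2^{\frac pq-1}\Bigl(\tfrac qp\Bigr)^p F_q(\vec x)^{\frac pq}
\]
inside the inf-sup and using the coincidence of the three kinds of critical values between $F_p$ and $F_p\circ\Phi_{q/p}$ will yield the desired inequalities $2^{-p}\lambda_k(\Delta_p)\ge 2^{-q}\lambda_k(\Delta_q)$ and $p(2\lambda_k(\Delta_p))^{1/p}\le q(2\lambda_k(\Delta_q))^{1/q}$ for $p\le q$.
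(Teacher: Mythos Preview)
Your proof is correct and follows essentially the same approach as the paper: both arguments reduce to the observation that $\Phi_{q/p}$ is an odd homeomorphism of $\R^n\setminus\{\vec 0\}$, so it induces a bijection on the family $\{S:\gamma(S)\ge k\}$ (and likewise for $\gamma^\pm$), after which the change of variable $S=\Phi_{q/p}(T)$ gives the coincidence of min-max values. The paper simply asserts that $\Phi_{q/p}^{-1}$ is a bijection on $\mathrm{Ind}_k(\R^n)$ without spelling out the index invariance, whereas you supply that justification explicitly; your closing paragraph on how the claim feeds into Theorem~\ref{thm:most} is extra context rather than part of the proof, but it is accurate.
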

Proof: Since $\Phi_{q/p}^{-1}:\mathrm{Ind}_k(\R^n)\to \mathrm{Ind}_k(\R^n)$ is a bijection, for any $k=1,\cdots,n$, 
$$\inf_{S\in \mathrm{Ind}_k(\R^n)}\sup\limits_{\vec x\in S}F_p\circ\Phi_{q/p}(\vec x) = \inf_{S\in \mathrm{Ind}_k(\R^n)}\sup\limits_{\vec x\in \Phi_{q/p}^{-1}(S)}F_p\circ\Phi_{q/p}(\vec x) =\inf_{S\in \mathrm{Ind}_k(\R^n)}\sup\limits_{\vec x\in S}F_p(\vec x),$$
where $\mathrm{Ind}_k(\R^n):=\{S\subset \R^n\setminus\{\vec0\}:\gamma(S)\ge k\}$. This implies that the $k$-th min-max  critical value of $F_p\circ\Phi_{q/p}$ agrees with the $k$-th min-max  critical value of $F_p$. 
The same property holds when $\gamma$ is replaced by $\gamma^-$ or $\gamma^+$.

\begin{Claim}\label{claim:6} The  critical points of $F_p$, $p(2F_p)^{\frac1p}$ and $2^{-p}F_p$ are exactly the same, and their (min-max, or homological, or homotopical linking) critical values coincide up to certain scaling and power factors. Precisely, $\lambda$ is a critical value of $F_p$ if and only if $p(2\lambda)^{\frac1p}$ is a critical value of $p(2F_p)^{\frac1p}$ if and only if  $2^{-p}\lambda$ is a critical value of $2^{-p}F_p$
\end{Claim}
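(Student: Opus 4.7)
The plan is to observe that both $\varphi_1(t) := p(2t)^{1/p}$ and $\varphi_2(t) := 2^{-p} t$, restricted to the range $[0, 2^{p-1}]$ of $F_p$, are strictly increasing homeomorphisms onto their images. Since the $p$-Rayleigh quotient $F_p$ is nonnegative and bounded, the compositions $p(2F_p)^{1/p}$ and $2^{-p} F_p$ can be obtained from $F_p$ by post-composing with a strictly monotone homeomorphism. Every flavor of ``critical value'' appearing in the claim (smooth/Clarke, min-max via $\gamma$, $\gamma^-$, $\gamma^+$, homological, or produced by homotopical linking) is defined entirely from $F_p$ and its sublevel sets; thus a general transfer principle applies.

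For the coincidence of critical points I would argue as follows. On the open set $\{F_p > 0\}$ both $\varphi_1$ and $\varphi_2$ are $C^1$ with strictly positive derivative, so by the Clarke-subdifferential chain rule $\partial(\varphi_i\circ F_p)(\vec x) = \varphi_i'(F_p(\vec x))\,\partial F_p(\vec x)$; hence $\vec 0\in\partial F_p(\vec x)$ if and only if $\vec 0\in\partial(\varphi_i\circ F_p)(\vec x)$, and the corresponding critical value is transformed by $\varphi_i$. The only degenerate case is $F_p(\vec x)=0$ for $\varphi_1$, where $\varphi_1$ fails to be differentiable; however $\vec x$ is then a global minimum of $F_p$ on $\{\|\vec x\|_\infty = 1\}$, hence automatically a minimum (thus critical point) of $\varphi_1\circ F_p$, with critical value $\varphi_1(0)=0$. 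So the critical points coincide and the values transform as required.

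For the three kinds of min-max critical values (using $\gamma$, $\gamma^-$, or $\gamma^+$), since $\varphi_i$ is strictly increasing and continuous,
\[
\inf_{\gamma(S)\ge k}\sup_{\vec x\in S}\varphi_i(F_p(\vec x)) \;=\; \varphi_i\!\left(\inf_{\gamma(S)\ge k}\sup_{\vec x\in S}F_p(\vec x)\right),
\]
and likewise for $\gamma^-$, $\gamma^+$. Hence $\lambda_k(\Delta_p)$ is a min-max value of $F_p$ exactly when $\varphi_i(\lambda_k(\Delta_p))$ is the corresponding min-max value of $\varphi_i\circ F_p$. For homological critical values, the key observation is that strict monotonicity of $\varphi_i$ gives the identity of sublevel sets $\{F_p\le t\} = \{\varphi_i\circ F_p\le \varphi_i(t)\}$; therefore the inclusion induced maps on homology at level $t$ for $F_p$ are identical (as maps of topological pairs) to those at level $\varphi_i(t)$ for $\varphi_i\circ F_p$. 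So $\lambda$ is a homological critical value of $F_p$ precisely when $\varphi_i(\lambda)$ is a homological critical value of $\varphi_i\circ F_p$. The same sublevel-set equality shows that any linking configuration $(Q,Q',S)$ satisfying $\min_S F_p > \max_Q F_p$ satisfies the analogous inequality for $\varphi_i\circ F_p$ with the same $\inf$-$\sup$ quantity transported through $\varphi_i$, giving the statement for critical values produced by homotopical linking.

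The argument is essentially routine once the monotonicity and continuity of $\varphi_1,\varphi_2$ are isolated; the only mild obstacle I expect is handling the nonsmoothness of $\varphi_1$ at $t=0$ in the critical-point part, which I would dispose of by the minimality argument above, and keeping track that the range of $F_p$ stays in the domain of $\varphi_1$ (guaranteed by $F_p\ge 0$).
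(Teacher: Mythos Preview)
Your proposal is correct and is exactly the kind of routine verification the paper has in mind: the paper simply writes ``This claim is easy to check, and thus we omit the proof.'' Your argument via the strictly increasing homeomorphisms $\varphi_1,\varphi_2$, the resulting identity of sublevel sets, and the Clarke chain rule (with the minimality patch at $F_p=0$ for $\varphi_1$) fills in precisely what the paper leaves implicit.
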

This claim is  easy to check, and thus we omit the proof. 

\vspace{0.1cm}

Together with all the above claims, the local monotonicity of the isolated homological eigenvalues  is then derived by 
 Lemma \ref{lemma:homology-eigen-increasing}. In fact, for any isolated homological  eigenvalue (or any eigenvalue  produced by   homotopical linking) $\lambda(\Delta_p)$ of the  $p$-Laplacian, by Claims \ref{claim:homology-critical-coin} and \ref{claim:homotopy-critical-coin},  $\lambda(\Delta_p)$ is an isolated homological  critical value (or a critical value  produced by   homotopical linking) of $F_p\circ\Phi_{q/p}$. 

Since $\lim\limits_{q\to p}\Phi_{q/p}=\mathrm{Id}$ for any $p\ge 1$, we have 
 for any sufficiently small  $\epsilon>0$, there exists $\delta>0$ such that for any $q\in(p-\delta,p+\delta)$, 
 $q(2F_q)^{\frac1q}-\epsilon\le p(2F_p\circ\Phi_{q/p})^{\frac1p}$ and $ 2^{-p}F_p\circ\Phi_{q/p}\le2^{-q}F_q+\epsilon$. By Claims  \ref{claim:1} and \ref{claim:2}, for any $q\in(p,p+\delta)$,  $ p(2F_p\circ\Phi_{q/p})^{\frac1p}\le q(2F_q)^{\frac1q}$ and $2^{-q}F_q\le 2^{-p}F_p\circ\Phi_{q/p}$. In consequence, we get 
$$q(2F_q)^{\frac1q}-\epsilon\le p(2F_p\circ\Phi_{q/p})^{\frac1p}\le q(2F_q)^{\frac1q}\;\text{ and }\;2^{-q}F_q\le 2^{-p}F_p\circ\Phi_{q/p}\le2^{-q}F_q+\epsilon.$$ 
 Then, it follows from 
 Lemma \ref{lemma:homology-eigen-increasing} and Claim  \ref{claim:6} that there is a homological  critical value (or a critical value  produced by   homotopical linking) $\lambda(\Delta_q)$ of $F_q$ satisfying $2^{-q}\lambda(\Delta_q)\le 2^{-p}\lambda(\Delta_p)\le2^{-q}\lambda(\Delta_q)+\epsilon$. The case of   $q(2\lambda(\Delta_q))^{\frac1q}-\epsilon\le p(2\lambda(\Delta_p))^{\frac1p}\le q(2\lambda(\Delta_q))^{\frac1q}$ is similar.

For the case of  min-max eigenvalues, for any $1\le p<q$, according to \eqref{eq:p<q2},  \eqref{eq:p<q1/p},  Claims \ref{claim:minmax-critical-coin}  and  \ref{claim:6}, we can similarly  
verify  that  
$p(2\lambda_k(\Delta_p))^{\frac1p}\le q(2\lambda_k(\Delta_q))^{\frac1q}$ and  $2^{-p}\lambda_k(\Delta_p)\ge 2^{-q}\lambda_k(\Delta_q)$, and the same inequalities hold when we consider $\lambda_k^\pm$ instead of $\lambda_k$. 
Moreover, for positive eigenvalues, these inequalities are  strict,  whenever $p\ne q$. We complete the whole  proof.
\end{proof}

\section{Eigenvalue estimates and refined Cheeger inequalities}
\label{sec:eigen-estimate}
\subsection{Variational eigenvalues of  $1$-Laplacian}
\label{sec:min-max-1-Lap}
 
Given a simple graph $G=(V,E)$,  let $P:=\{V_1,\cdots,V_k\}$ be a subpartition\footnote{A subpartition of $V$ is a family of pairwise disjoint subsets of $V$.} of $V$ such that each $V_i$ induces a complete subgraph.  We denote by $\mathcal{SC}(G)$ the collection of all these 
subpartitions.  
For any  $P\in \mathcal{SC}(G)$,  let $$h_*(P):=h_*(V_1,\cdots,V_k)=\min\limits_{A\subset \cup_{i=1}^kV_i:|A\cap V_i|\le 1}\frac{|\partial A|}{\vol(A)}$$
and  let $$c(P):=\sum_{i=1}^k c(V_i),\;\text{ where }c(V_i)=\begin{cases}1,&\text{ if }|V_i|\le 2,\\
2,&\text{ if }|V_i|\ge 3.
\end{cases}$$

\begin{lemma}\label{lemma:minmax-eigen-lower}
For any subpartition $P\in \mathcal{SC}(G)$, 
$$\lambda_{n-c(P)+1}^-(\Delta_1)\ge h_*(P).$$ 
\end{lemma}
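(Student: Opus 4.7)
The strategy is to use the variational characterization
\[\lambda_{n-c(P)+1}^-(\Delta_1) = \inf_{\gamma^-(S)\ge n-c(P)+1} \sup_{\vec x\in S} F_1(\vec x)\]
and prove the contrapositive: for every centrally symmetric compact $S\subset \R^n\setminus\{\vec 0\}$ with $\sup_{\vec x\in S} F_1(\vec x) < h_*(P)$, the Krasnoselskii genus satisfies $\gamma^-(S)\le n-c(P)$. By the definition of $\gamma^-$, it suffices to exhibit an odd continuous map
\[\pi\colon \{\vec x\ne\vec 0 : F_1(\vec x)<h_*(P)\}\longrightarrow \R^{n-c(P)}\setminus\{\vec 0\}.\]

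First, I would construct $\pi$ block by block from $P$. For each $V_i$ I throw away exactly $c(V_i)$ linear coordinates on $\R^{V_i}$: for a singleton $V_i=\{v\}$ I discard the evaluation $x_v$; for a pair $V_i=\{u,v\}$ I discard the sum $x_u+x_v$ (keeping the antisymmetric difference); for $|V_i|\ge 3$ I discard two odd symmetric functions of $\vec x|_{V_i}$ chosen so that their simultaneous vanishing forces $\vec x|_{V_i}$ to take at most one positive and one negative value. Coordinates on $V\setminus\bigcup V_i$ are kept untouched. The remaining map $\pi$ is linear, odd, and has target dimension exactly $n-c(P)$.

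Second, the heart of the argument is to prove $\pi(\vec x)\ne\vec 0$ whenever $\vec x\ne\vec 0$ and $F_1(\vec x)<h_*(P)$. I would use the co-area formula
\[\sum_{\{i,j\}\in E}|x_i-x_j|=\int_{-\infty}^{\infty}|\partial\{x>t\}|\,dt,\qquad \sum_{i\in V}\deg(i)|x_i|=\int_{0}^{\infty}\vol(\{|x|>t\})\,dt,\]
from which an averaging argument produces a threshold $t\in\R$ such that one of the signed level sets $A_t=\{x>t\}$ or $\{x<-t\}$ satisfies $|\partial A_t|/\vol(A_t)<h_*(P)$. By the definition of $h_*(P)$, such an $A_t$ cannot be admissible: either $A_t$ contains a vertex outside $\bigcup V_i$, or $A_t$ meets some $V_i$ in at least two vertices. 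The first possibility is excluded because $\pi(\vec x)=\vec 0$ forces $x_v=0$ for $v\notin \bigcup V_i$ (those coordinates were kept). The second possibility must be excluded by the construction of the block-wise coordinates, via the fact that the two discarded symmetric functions on a $V_i$ with $|V_i|\ge 3$ vanish exactly when $\vec x|_{V_i}$ takes no two distinct values of the same sign, so that every level set meets $V_i$ in at most one vertex.

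The main obstacle I anticipate is the $|V_i|\ge 3$ case, where only two of $|V_i|$ coordinates on $\R^{V_i}$ are discarded but the admissibility constraint $|A\cap V_i|\le 1$ is rather restrictive. Here I would exploit the fact that $V_i$ induces a clique, so any two distinct same-sign values within $V_i$ inflate $\sum_{\{i,j\}\in E}|x_i-x_j|$ by at least one full edge of the clique; choosing the two discarded functionals as, for instance, the second-largest positive coordinate and the second-smallest negative coordinate (after a suitable odd smoothing to make them continuous) should guarantee that vanishing of $\pi$ forces all co-level structure inside $V_i$ to be compatible with the $h_*(P)$-admissible class. Turning this heuristic into a rigorous continuous odd map, and carefully verifying the incompatibility with the bad level set $A_t$, is the technical crux I expect to spend most effort on.
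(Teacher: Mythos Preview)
Your overall strategy---bound the genus of the open sublevel set $\{F_1<h_*(P)\}$ by exhibiting an odd continuous map into $\R^{n-c(P)}\setminus\{\vec 0\}$---is the natural dual of what the paper does. The paper instead builds a centrally symmetric set $S(P)\cong\mathbb{S}^{c(P)-1}$ on which $\min F_1=h_*(P)$, extends the odd homeomorphism $S(P)\to\mathbb{S}^{c(P)-1}$ to an odd homeomorphism $\widetilde\psi:\R^n\to\R^n$ sending the cone over $S(P)$ onto a linear subspace $\R^{c(P)}$, and then invokes the intersection property of the Krasnoselskii genus. Your $\pi$ is precisely the composition of $\widetilde\psi$ with the projection onto the complementary $\R^{n-c(P)}$, so the two arguments are equivalent once the construction is carried out correctly.

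Where your proposal has genuine gaps is in the block-wise construction. First, you omit the reduction step that the paper performs at the outset: replace each $V_i$ with $|V_i|\ge 3$ by a three-element subclique and each $V_i$ with $|V_i|\le 2$ by a singleton; this preserves $c(P)$ and can only increase $h_*(P)$, so it is harmless, and it reduces everything to triangles and singletons. Without this reduction your treatment of large cliques becomes needlessly hard, and in fact your suggested coordinates (``second-largest positive coordinate'', ``second-smallest negative coordinate'') are not odd functions, so they cannot serve as components of an odd $\pi$. Second, your pair case is backwards: if you \emph{keep} $x_u-x_v$ then $\pi(\vec x)=0$ forces $x_u=x_v$, and the level set $\{x>t\}$ can meet $V_i=\{u,v\}$ in both vertices, which is not admissible for $h_*(P)$; you need to keep $x_u+x_v$ (or simply reduce to a singleton). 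Third, the assertion that $\pi$ is linear cannot be maintained: the zero set you need on a triangle is the cone over the hexagon $\varhexagon(\vec 1_u,\vec 1_v,\vec 1_w)$, which is not a linear subspace. A clean nonlinear choice that does work is $\pi_i(\vec x|_{V_i})=\mathrm{median}(x_u,x_v,x_w)$ for a triangle $V_i=\{u,v,w\}$: this is odd and continuous, and its vanishing forces at most one positive and one negative coordinate, exactly what your co-area argument requires. With the reduction and this median map in place, your argument goes through and is essentially the paper's proof viewed from the complementary side.
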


\begin{proof}
We only need to work on a subpartition $P=\{V_1,\cdots,V_k\}$ such that each $V_i$ is either a singleton or a triangle. Here and after, we simply say that a subset $V_i$ is a \textbf{triangle} if it induces a three-order complete subgraph in $G$. 
In fact, for any subpartition $P=\{V_1,\cdots,V_k\}\in \mathcal{SC}(G)$, we take  $P'=\{V_1',\cdots,V_k'\}\in \mathcal{SC}(G)$ satisfying $V_i'\subset V_i$ and 
$$|V_i'|=\begin{cases}3,&\text{ if } |V_i|\ge 3\\
1,&\text{ if } |V_i|\in \{1,2\}. \end{cases}$$
 Then, we get a new subpartition
 $P'=\{V_1',\cdots,V_k'\}$ consisting of only singletons and  triangles. By the definition of $c(P)$, it is clear that $c(P')=c(P)$. 
 Also, note that $$\{A\subset \cup_{i=1}^kV_i:|A\cap V_i|\le 1,i=1,\cdots,k\}\supset \{A\subset \cup_{i=1}^kV_i':|A\cap V_i'|\le 1,i=1,\cdots,k\}$$
 which implies that $h_*(P)\le h_*(P')$. Hence, if we have   $\lambda_{n-c(P')+1}(\Delta_1)\ge h_*(P')$, then $\lambda_{n-c(P)+1}(\Delta_1)=\lambda_{n-c(P')+1}(\Delta_1)\ge h_*(P')\ge h_*(P)$. 
 
According to this fact, we may assume without loss of generality that each $V_i$ of the subpartition $P$ is a singleton 
or a triangle. Before proving Lemma \ref{lemma:minmax-eigen-lower}, we do some preparations. 
 

The {\sl hexagon structure}  corresponding to three linear independent  vectors $\vec a,\vec b,\vec c$ is a spatial hexagon defined as $${\Large\varhexagon}(\vec a,\vec b,\vec c):=[\vec a,-\vec b]\cup [-\vec b,\vec c]\cup [\vec c,-\vec a]\cup[ -\vec a,\vec b]\cup [\vec b,-\vec c]\cup [-\vec c,\vec a]$$
where $[\vec a,-\vec b]$ indicates the segment with the  endpoints $\vec a$ and $-\vec b$. 
The  geometric intuition for  such a hexagon  can be seen in Figure~\ref{fig:hexagon}.

\begin{figure}
\LARGE
\begin{center}
\begin{tikzpicture}[scale=0.6]
    {\draw[color=black,->] (0,0,0) -- (4,0,0) node[anchor=north east]{$\vec b$};}
    {\draw[color=black,->] (0,0,0) -- (0,4,0) node[anchor=north west]{$\vec c$};}
    {\draw[color=black,->] (0,0,0) -- (0,0, 4) node[anchor=south]{$\vec a$};}
\filldraw[red,fill opacity=0.03,line width=0,draw =red!35] (0,0,0)--(0,3+1,0)--(-3-1,0,0);
\filldraw[red,fill opacity=0.05,line width=0,draw =red!35] (0,0,0)--(-3-1,0,0)--(0,0,3+1);
\filldraw[red,fill opacity=0.07,line width=0,draw =red!35] (0,0,0)--(0,0,3+1)--(0,-3-1,0);
\filldraw[red,fill opacity=0.03,line width=0,draw =red!35] (0,0,0)--(0,-3-1,0)--(3+1,0,0);
\filldraw[red,fill opacity=0.05,line width=0,draw =red!35] (0,0,0)--(3+1,0,0)--(0,0,-3-1);
\filldraw[red,fill opacity=0.07,line width=0,draw =red!35] (0,0,0)--(0,0,-3-1)--(0,3+1,0);
\draw[red,thick] (4,0,0)--(0,-4,0)--(0,0,4)--(-4,0,0)--(0,4,0)--(0,0,-4)--(4,0,0);
\node (a) at (0,0, -4-0.6) {\tiny $-\vec a$};
\node (b) at (-4-0.5,-0.2,0) {\tiny $-\vec b$};
\node (c) at (-0.6,-4+0.3,0) {\tiny $-\vec c$};
\end{tikzpicture}
\end{center}
\caption{The hexagon $
{\Large\varhexagon}(\vec a,\vec b,\vec c)$  
is the union of the six red segments.
\label{fig:hexagon}}
\end{figure}

Let \begin{equation}\label{eq:S(V)}
S(V_i)=\begin{cases}
\{-\vec1_{\{v\}},\vec1_{\{v\}}\},&\text{ if }V_i=\{v\},\\
{\Large\varhexagon}(\vec 1_{\{u\}},\vec 1_{\{v\}},\vec 1_{\{w\}}),&\text{ if }V_i=\{u,v,w\},
\end{cases}    
\end{equation}
and let
$$S(P)=S(V_1)*S(V_2)*\cdots*S(V_k)$$
where $*$ denotes the simplicial join\footnote{The simplicial join of  subsets $A$ and $B$ in $\R^n$ is defined as  $A*B:=\{t\vec a+(1-t)\vec b:\vec a\in A,\vec b\in B,0\le t\le1\}$.}.  
We shall prove that $S(P)$ is  homeomorphic to  a sphere of dimension $c(P)-1$,  and $\inf\limits_{\vec x\in S(P)}F_1(\vec x)= h_*(P)$. 
 
\begin{Claim}\label{claim:F_1=h_*} $\min\limits_{\vec x\in S(P)}F_1(\vec x)= h_*(P)$. 
\end{Claim}

Proof: For any $\vec x\in\R^n\setminus\{\vec0\}$, there exist  $\vec x^+,\vec x^-\in \R^n_+$ such that $\vec x=\vec x^+-\vec x^-$, where $n=\#V$. It is easy to check that 
\begin{equation}\label{eq:F_1x+x-}
 F_1(\vec x)=\frac{\sum_{i\sim j}|x_i^+-x_j^+|+\sum_{i\sim j}|x_i^--x_j^-|}{\sum_{i\in V}\deg(j)|x_j^+|+\sum_{i\in V}\deg(j)|x_j^-|}\ge\min\{F_1(\vec x^+),F_1(\vec x^-)\}.   
\end{equation} 
Hence, $\min\limits_{\vec x\in S(P)}F_1(\vec x)= \min\limits_{\vec x\in S(P)\cap\R^n_+}F_1(\vec x)$. Note that $S(P)\cap\R^n_+=(S(V_1)\cap\R^n_+)*\cdots*(S(V_k)\cap\R^n_+)$ which is due to  the definition of $S(V_i)$. 
Therefore, by the definition of $S(V_i)$ in \eqref{eq:S(V)}, for any $\vec x\in S(P)\cap\R^n_+$, $\#(\mathrm{supp}(\vec x)\cap V_i)\le 1$, $\forall i$, where $\mathrm{supp}(\vec x):=\{i\in V:x_i\ne0\}$ is the support of $\vec x$. According to an elementary technique (see \cite{HS11,JZ21}), it can be verified that 
$F_1(\vec x)\ge \frac{|\partial A|}{\vol(A)}$ for some nonempty subset $A\subset \mathrm{supp}(\vec x)$. For readers' convenience, we present a brief  proof here. In fact, by the inequality \eqref{eq:F_1x+x-} and the relation $\mathrm{supp}(\vec x^+)\sqcup  \mathrm{supp}(\vec x^-)=\mathrm{supp}(\vec x)$, we can assume that $\vec x\in\R^n_+$. 
Then it can be verified that
\[F_1(\vec x)=\frac{\int_0^{\|\vec x\|_\infty}|\partial A_t|dt}{\int_0^{\|\vec x\|_\infty}\vol(A_t)dt}\ge \frac{|\partial A_{t_0}|}{\vol(A_{t_0})}\]
for some $0<t_0<\|\vec x\|_\infty$, 
where $A_t
:=\{i\in V:x_i>t\}$.  Hence, we can take such subset $A=A_{t_0}$ which is a subset of $\mathrm{supp}(\vec x)$. 
Thus, we have proved $\min_{\vec x\in S(P)}F_1(\vec x)\ge h_*(P)$. 

The opposite inequality is  easy to prove. Indeed, 
given $P:=\{V_1,\cdots,V_k\}$ and $A\subset \cup_{i=1}^kV_i$ such that $|A\cap V_i|\le 1$, $i=1,\cdots,k$, we can take $\vec x=\vec1_{A}\in S(P)$ and it is easy to check that $F_1(\vec 1_{A})= |\partial A|/\vol(A)$.   
The proof is completed. 

\begin{Claim}
There is an odd homeomorphism   maps  $S(P)$ to $ \mathbb{S}^{c(P)-1}$, 
where $\mathbb{S}^{c(P)-1}$  denotes the unit  sphere of dimension $c(P)-1$.

\end{Claim} 
Proof: 
Suppose that 
 $V_1,\cdots,V_l$ are triangles, and  $V_{l+1},\cdots,V_k$ are single point sets. Then, by the definition of $S(V_i)$ in   \eqref{eq:S(V)}, there exists an odd homeomorphism $\psi_i:S(V_i)\to \mathbb{S}^1\hookrightarrow \R^{V_i}$ if  $i\in\{1,\cdots,l\}$ and an odd homeomorphism $\psi_j:S(V_j)\to \mathbb{S}^0\hookrightarrow \R^{V_j}$ if  $j\in\{l+1,\cdots,k\}$, where $\mathbb{S}^1\hookrightarrow \R^{V_i}$ means that we put the image $\psi(S(V_i))=\mathbb{S}^1$ as a one-dimensional unit  sphere centered at the origin in $\R^{V_i}$. Note that we also have  $S(V_i)\subset \{\vec x\in \R^n:\mathrm{supp}(\vec x)\subset V_i\}$, and   $$\underbrace{\mathbb{S}^1*\cdots*\mathbb{S}^1}\limits_{l \text{ times}}*\underbrace{\mathbb{S}^0*\cdots*\mathbb{S}^0}\limits_{k-l \text{ times}}=\mathbb{S}^{2l+k-l-1}
=\mathbb{S}^{c(P)-1}.$$
Then, we can define a map  $\psi:S(P)\to \mathbb{S}^{c(P)-1}$ 
by  $\psi(\sum_{i=1}^k t_i\vec x^i):=\sum_{i=1}^k t_i\psi_i(\vec x^i)$, $\forall \vec x^i\in S(V_i)$, $0\le t_i\le 1$, $\sum_{i=1}^k t_i=1$, $i=1,\cdots,k$. It is easy to check that $\psi$ is  
an odd homeomorphism via  the diagram: 
$$\xymatrix{S(P)\ar[rrrrr]^{\psi}\ar@{=}[d]&&&&&\mathbb{S}^{c(P)-1}\ar@{=}[d]\\ S(V_1)*\cdots*S(V_l)*S(V_{l+1})*\cdots*S(V_k)\ar[rrrrr]^{\psi_1*\cdots*\psi_k}&&&&&\underbrace{\mathbb{S}^1*\cdots*\mathbb{S}^1}\limits_{l \text{ times}}*\underbrace{\mathbb{S}^0*\cdots*\mathbb{S}^0}\limits_{k-l \text{ times}} }$$

The proof is completed. 


\begin{Claim}\label{claim:nonempty-intersec}
For any centrally symmetric compact subset $S$ with $\gamma^-(S)
\ge n-c(P)+1$, we have $S\cap \{t\vec x:\vec x\in S(P),t\ge0\}\ne\varnothing$.
\end{Claim}

Proof: Continuing the preceding proof, it is not difficult to verify that any $\vec x\in\R^n$ has a unique decomposition $\vec x=\vec y+\sum_{i=1}^lt_i\vec1_{V_i}+\sum_{v\in V\setminus \cup_{i=1}^kV_i}t_v\vec1_{\{v\}}$ with  $t_i,t_v\in \R$, and $\vec y/\|\vec y\|_1\in S(P)$ if $\vec y\ne\vec0$. With the help of  such a decomposition, we then define a map  $\widetilde{\psi}:\R^n\to \R^n$ by
 $$\widetilde{\psi}(\vec x)=\|\vec y\|_1\psi(\frac{\vec y}{\|\vec y\|_1})+\sum_{i=1}^lt_i\vec\xi_{V_i}+\sum_{v\in V\setminus \cup_{i=1}^kV_i}t_v\vec1_v$$
where $\psi$ is the odd homeomorphism introduced in the preceding proof, and   $\vec\xi_i$ is a nonzero vector orthogonal to $\psi(S(V_i))$ in $\R^{V_i}$, $i=1,\cdots,l$. 

One can check that $\widetilde{\psi}:\R^n\to \R^n$ is an odd homeomorphism. Also, $\widetilde{\psi}$ is positively one-homogeneous, i.e., $\widetilde{\psi}(t\vec x)=t\widetilde{\psi}(\vec x)$, $\forall \vec x\in\R^n$,   $\forall t\ge0$. In particular, $\widetilde{\psi}(S(P))=\psi(S(P))=\mathbb{S}^{c(P)-1}$, and thus $\{t\vec x:\vec x\in \psi(S(P)),t\ge0\}=\mathrm{span}(\psi(S(P)))
= \R^{c(P)}$. For any centrally symmetric compact subset $S$ with $\mathrm{genus}(S)\ge n-c(P)+1$, 
$\widetilde{\psi}(S)$ is also a centrally symmetric compact subset with $\mathrm{genus}(\widetilde{\psi}(S))=\mathrm{genus}(S)\ge n-c(P)+1$. 
Finally, 
by the intersection property of Krasnoselskii genus, the subset  $\widetilde{\psi}(S)$ intersects with  the linear subspace $\{t\vec x:\vec x\in \psi(S(P)),t\ge0\}$. Accordingly, 
\begin{align*}
S\cap \{t\vec x:\vec x\in S(P),t\ge0\}&=\widetilde{\psi}^{-1}(\widetilde{\psi}(S))\cap \widetilde{\psi}^{-1} \{t\vec x:\vec x\in \widetilde{\psi}(S(P)),t\ge0\} 
\\&= \widetilde{\psi}^{-1}\left(\widetilde{\psi}(S)\cap  \{t\vec x:\vec x\in \psi(S(P)),t\ge0\}\right) 
\ne\varnothing
\end{align*}
which completes the proof.

\vspace{0.1cm} 

We are ready to prove  Lemma \ref{lemma:minmax-eigen-lower}. 
By the zero-homogeneity of $F_1$,  Claims \ref{claim:F_1=h_*}
and  \ref{claim:nonempty-intersec}, we have
\begin{align*}
\lambda_{n-c(P)+1}^-(\Delta_1)&=\inf_{\mathrm{genus}(S)\ge n-c(P)+1}\sup\limits_{\vec x\in S} F_1(\vec x)
\\&\ge
\inf_{\mathrm{genus}(S)\ge n-c(P)+1}\sup\limits_{\vec x\in S\cap \{t\vec x:\vec x\in S(P),t\ge0\}} F_1(\vec x)
\\&\ge\min\limits_{\vec x\in S(P)}F_1(\vec x)=h_*(P).
\end{align*}

We complete the whole proof.
\end{proof}

\begin{defn}
We say that some subsets $V_1,\ldots,V_k$ of $V$ are \textbf{pairwise non-adjacent} if every edge $\{i,j\}\in E$ intersects 
at most one $V_l$,  $l=1,\cdots,k$. 
\end{defn}

\begin{cor}\label{cor:independence-number}
Let $\mathcal{SC}_{na}(G)=\{P\in \mathcal{SC}(G):\text{the elements in }P\text{ are pairwise  non-adjacent}\}$, and denote by  $$\alpha_*(G)=\max\{c(P)|P\in \mathcal{SC}_{na}(G)\}
$$ 
the so-called pseudo-independence  number introduced in \cite{Zhang18}. Then  
$$\lambda_{n-\alpha_*(G)+1}^-(\Delta_1)=\cdots=\lambda_{n}^-(\Delta_1)=1.$$
\end{cor}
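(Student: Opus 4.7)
The plan is to sandwich the eigenvalues between $1$ (from above) and $1$ (from below), using Lemma~\ref{lemma:minmax-eigen-lower} for the lower bound and a direct computation of $h_*(P)$ for subpartitions in $\mathcal{SC}_{na}(G)$.

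For the upper bound, I would invoke the inclusion chain recorded earlier in the introduction, namely that every min-max $\Delta_1$-eigenvalue lies in $[0,1]$ (since $F_1(\vec x)\le 1$ for all $\vec x\ne\vec 0$, as each edge contributes at most $|x_i|+|x_j|$ to the numerator). Hence $\lambda_k^-(\Delta_1)\le 1$ for every $k$, and in particular $\lambda_{n-\alpha_*(G)+1}^-(\Delta_1)\le\cdots\le\lambda_n^-(\Delta_1)\le 1$ by monotonicity of min-max values.

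For the lower bound, pick a subpartition $P=\{V_1,\ldots,V_k\}\in\mathcal{SC}_{na}(G)$ attaining $c(P)=\alpha_*(G)$. Lemma~\ref{lemma:minmax-eigen-lower} gives $\lambda_{n-\alpha_*(G)+1}^-(\Delta_1)\ge h_*(P)$, so it suffices to check that $h_*(P)=1$. The key observation (which I expect to be the one nontrivial step, though still elementary) is that pairwise non-adjacency forces every admissible $A$ in the definition of $h_*(P)$ to be an \emph{independent set} in $G$: if $A\subset\bigcup_i V_i$ with $|A\cap V_i|\le 1$ and $v\in A\cap V_i$, then any neighbor $w$ of $v$ lies either in $V_i$ (impossible, as that would force $|A\cap V_i|\ge 2$) or outside $\bigcup_j V_j$ (so $w\notin A$) or in some $V_j$ with $j\neq i$ (impossible, since the non-adjacency of the $V_l$'s rules out such an edge). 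Consequently every edge incident to $v$ crosses $\partial A$, giving $|\partial A|=\sum_{v\in A}\deg(v)=\vol(A)$ and hence $|\partial A|/\vol(A)=1$ for every nonempty admissible $A$. Therefore $h_*(P)=1$, which combined with the upper bound yields
\[
\lambda_{n-\alpha_*(G)+1}^-(\Delta_1)=\cdots=\lambda_n^-(\Delta_1)=1,
\]
as claimed. The main obstacle is really just unpacking the combinatorics of pairwise non-adjacency together with the constraint $|A\cap V_i|\le 1$; once one sees that $A$ must be independent in $G$, the equality $|\partial A|=\vol(A)$ is immediate.
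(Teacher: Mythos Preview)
Your proof is correct and matches the paper's argument almost verbatim: both show that for $P\in\mathcal{SC}_{na}(G)$ every admissible $A$ is an independent set (so $|\partial A|=\vol(A)$ and $h_*(P)=1$), then apply Lemma~\ref{lemma:minmax-eigen-lower} together with the trivial bound $\lambda_k^-(\Delta_1)\le 1$. One small phrasing fix in your case analysis: the case $w\in V_i$ is \emph{not} impossible (the $V_i$'s may be triangles containing several neighbours of $v$); what is true is that $w\in V_i$ combined with $|A\cap V_i|\le 1$ and $v\in A\cap V_i$ forces $w\notin A$, which is exactly the conclusion you need.
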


\begin{proof}
For any $P=(V_1,\cdots,V_k)\in \mathcal{SC}_{na}(G)$, $V_1,\cdots,V_k$ are pairwise  non-adjacent. Thus, for any $A\subset \sqcup_{i=1}^kV_i$ with $\#(A\cap V_i)\le 1$, $A$ is the disjoint union of some pairwise  non-adjacent vertices, which implies that   $|\partial A|=\vol(A)$. Therefore, we have  $h_*(P)=1$. In particular, taking   $P'\in\mathcal{SC}_{na}(G)$  such that $c(P')=\alpha_*(G)$, we have $h_*(P')=1$.  By Lemma \ref{lemma:minmax-eigen-lower}, $\lambda_{n-c(P')+1}^-(\Delta_1)\ge h_*(P')=1$. On the other hand, it is known that $\lambda_{n-c(P')+1}^-(\Delta_1)\le\cdots\le  \lambda_{n}^-(\Delta_1)=1$. Hence, we have  $\lambda_{n-\alpha_*(G)+1}^-(\Delta_1)=\lambda_{n-c(P')+1}^-(\Delta_1)=\cdots= \lambda_{n}^-(\Delta_1)=1$. 
\end{proof}

A set of vertices of a graph is independent if the vertices are pairwise nonadjacent. The \emph{independence number} of a graph  is the cardinality of its largest independent set.

Corollary \ref{cor:independence-number}  generalizes and enhances a result in \cite{Zhang18}. 
 Moreover, we have a stronger version of the main theorem in \cite{Zhang18}: 
\begin{pro}
Let $\alpha(G)$ be the independence number of $G$, and let $\gamma(G)$ be the multiplicity of the eigenvalue $1$ of $\Delta_1$. Denote by $t(G)$ the largest integer  satisfying $\lambda_{n-t(G)+1}^-(\Delta_1)=1$. Then,  
$$\alpha(G)\le \alpha_*(G)\le t(G)\le \gamma(G)\le \min\{c_*(G) ,2\alpha(G)\}$$
where 
$c_*(G):=\min\{c(P)|P\in \mathcal{SC}_p(G)\}$ and  $\mathcal{SC}_p(G)=\{P\in \mathcal{SC}(G):P\text{ is a partition of }V\}$.
\end{pro}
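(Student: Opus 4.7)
The plan is to work through the chain $\alpha(G)\le\alpha_*(G)\le t(G)\le\gamma(G)\le\min\{c_*(G),2\alpha(G)\}$ from left to right, establishing each inequality in turn.

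The two leftmost inequalities come for free. For $\alpha(G)\le\alpha_*(G)$, a maximum independent set $I$ gives the subpartition $P=\{\{v\}:v\in I\}\in\mathcal{SC}_{na}(G)$ of pairwise non-adjacent singleton cliques with $c(P)=|I|=\alpha(G)$. The next inequality $\alpha_*(G)\le t(G)$ is exactly Corollary \ref{cor:independence-number}. The middle inequality $t(G)\le\gamma(G)$ comes from the Lusternik-Schnirelman multiplicity theorem applied to the Rayleigh quotient $F_1$: the definition of $t(G)$ yields $\lambda_{n-t(G)+1}^-(\Delta_1)=\cdots=\lambda_n^-(\Delta_1)=1$, and a deformation-lemma argument (formulated via the Clarke subdifferential, since $F_1$ is only locally Lipschitz) forces any level at which $t$ consecutive min-max values coincide to have critical set with Krasnoselskii genus at least $t$, yielding $\gamma(G)=\gamma^-(\hat{S}_1(\Delta_1))\ge t(G)$.

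The upper bound $\gamma(G)\le\min\{c_*(G),2\alpha(G)\}$ hinges on a structural observation about eigenvectors at $\lambda=1$. Reading \eqref{eq:brief-1-Lap-1} at a vertex $i$ with $x_i>0$ forces $\sum_{j\sim i}z_{ij}=\deg(i)$ and hence $z_{ij}=+1$ for every neighbor $j$; applying the same reasoning at a neighbor $j$ with $x_j>0$ forces $z_{ji}=+1$, contradicting the antisymmetry $z_{ji}=-z_{ij}$. Thus $A^+(\vec x):=\{i:x_i>0\}$ is an independent set of $G$, and symmetrically so is $A^-(\vec x):=\{i:x_i<0\}$.

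For $\gamma(G)\le 2\alpha(G)$, the bounds $|A^\pm(\vec x)|\le\alpha(G)$ imply that the $\ell^1$-normalization $\vec x\mapsto\vec x/\|\vec x\|_1$ sends $\hat{S}_1(\Delta_1)$ into the subcomplex $L$ of $\widetilde{|K_n|}$ whose vertex set consists of those $(\vec 1_A-\vec 1_B)/\vol(A\cup B)$ with $A,B$ disjoint independent sets; chains in this poset have length at most $2\alpha(G)$, so $\dim L\le 2\alpha(G)-1$ and the Krasnoselskii genus is bounded by $2\alpha(G)$. For $\gamma(G)\le c_*(G)$, fix any clique partition $P=\{V_1,\ldots,V_k\}\in\mathcal{SC}_p(G)$. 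Because each $V_j$ is a clique, the independence of $A^\pm(\vec x)$ forces $\vec x|_{V_j}$ to have at most one positive and one negative entry, so (after extending $S(V_j)$ from Lemma \ref{lemma:minmax-eigen-lower} to cliques of arbitrary size as the symmetric set of $\ell^1$-unit vectors in $\R^{V_j}$ with at most one positive and one negative coordinate, which admits an odd continuous retraction onto $\mathbb{S}^{c(V_j)-1}$) the decomposition $\vec x/\|\vec x\|_1=\sum_j t_j\vec y_j$, with $t_j=\|\vec x|_{V_j}\|_1/\|\vec x\|_1$ and $\vec y_j=\vec x|_{V_j}/\|\vec x|_{V_j}\|_1\in S(V_j)$, realizes an odd continuous map into the join $S(V_1)*\cdots*S(V_k)\cong\mathbb{S}^{c(P)-1}$. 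Taking the infimum over $P$ yields the claim.

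The main obstacle is constructing the clique-by-clique retraction $S(V_j)\to\mathbb{S}^{c(V_j)-1}$ for $|V_j|\ge 4$: the allowed region in $\R^{V_j}$ is a non-convex union of segments $[\vec 1_a,-\vec 1_b]$ for $a\ne b$, whose underlying graph is $K_{s,s}$ minus its perfect matching (with $s=|V_j|$), and one must both exhibit an odd continuous map from this graph to $\mathbb{S}^1$ and patch the pieces together so that the join-map remains continuous where some $\vec x|_{V_j}$ vanishes (in which case $t_j=0$ and the $S(V_j)$-component is irrelevant).
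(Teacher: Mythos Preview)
Your approach is substantially correct and considerably more self-contained than the paper's own proof, which simply invokes \cite{Zhang18} for the four inequalities $\alpha(G)\le\alpha_*(G)$, $t(G)\le\gamma(G)$, $\gamma(G)\le 2\alpha(G)$ and $\gamma(G)\le c_*(G)$, and proves only $\alpha_*(G)\le t(G)$ in the text (via Corollary~\ref{cor:independence-number}, exactly as you do). Your structural lemma---that $A^\pm(\vec x)$ are independent sets for every eigenvector at $\lambda=1$---is the heart of the matter and is correct; the two genus bounds you derive from it (embedding into a subcomplex of dimension $\le 2\alpha(G)-1$, and mapping into a join over a clique partition) are the natural arguments and presumably match what is done in \cite{Zhang18}. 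What you gain is a proof readable without that reference; what the paper gains is brevity.

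The obstacle you flag for $|V_j|=s\ge 4$ is real but easily dispatched. The set $S(V_j)$ is a connected $1$-dimensional simplicial complex carrying a free $\mathbb Z_2$-action, and any such complex has Krasnoselskii genus at most $2$: the quotient is again a graph, its classifying map to $B\mathbb Z_2=\mathbb{RP}^\infty$ factors through the $1$-skeleton $\mathbb{RP}^1$, and this lifts to an odd map $S(V_j)\to\mathbb S^1$. Concretely, send $\vec e_a\mapsto e^{\pi i a/s}$ (so $-\vec e_a\mapsto e^{\pi i(a+s)/s}$) and map each edge $[\vec e_a,-\vec e_b]$ with $a\ne b$ to the shorter arc between the images of its endpoints; since $a\ne b$ these endpoints are never antipodal, and the construction is visibly odd. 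Your join-continuity worry is harmless: when $t_j\to 0$ the limit point in the join is independent of the (undefined) $S(V_j)$-coordinate. One minor correction: for $|V_j|\ge 4$ the join $S(V_1)*\cdots*S(V_k)$ is not homeomorphic to $\mathbb S^{c(P)-1}$; you only need (and have) an odd map into $\mathbb S^{c(P)-1}$, obtained by joining the maps $S(V_j)\to\mathbb S^{c(V_j)-1}$.
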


\begin{proof}
The inequalities  $\alpha(G)\le \alpha_*(G)$, $t(G)\le \gamma(G)$ and $\gamma(G)\le   2\alpha(G)$ have been established in Theorem 1 
in \cite{Zhang18}. The inequality $\gamma(G)\le   c_*(G)$ is equivalent to  Corollary 1 in \cite{Zhang18}. It remains to show $\alpha_*(G)\le t(G)$. 

By  Corollary \ref{cor:independence-number}, $
\lambda_{n-\alpha_*(G)+1}^-(\Delta_1)=1$.  Since $t(G)$ is the largest number such that $\lambda_{n-t(G)+1}^-(\Delta_1)=1$, we have $t(G)\ge\alpha_*(G)$. 
 The proof is completed. 
\end{proof}

\subsection{Refined multi-way Cheeger inequalities}
\label{sec:Cheeger-in}

In this section, we shall prove Theorem  \ref{thm:p-lap-C}. Before proving this theorem, we collect here some useful claims.

\begin{Claim}[Theorem 8 in \cite{CSZ17}]\label{claim:CSZ}
Let $\vec x$ be an eigenvector  corresponding to an eigenvalue $\lambda$ of $\Delta_1$ with   $\lambda\le\lambda_k^-(\Delta_1)$, and assume that $\vec x$ has $m$ (strong)  nodal domains.  Then, 
$$h_{m}\le \lambda_k^-(\Delta_1)\le h_k.$$
The above inequality still holds when we use $\lambda_k(\Delta_1)$ or $\lambda_k^+(\Delta_1)$ instead of $\lambda_k^-(\Delta_1)$ with the same proof as  that of Theorem 8 in \cite{CSZ17}. 
\end{Claim}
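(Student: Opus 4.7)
The plan is to establish the two inequalities $h_m \le \lambda_k^-(\Delta_1)$ and $\lambda_k^-(\Delta_1) \le h_k$ separately, treating the upper and lower bounds by essentially dual arguments: a min-max construction for the upper bound, and a dual decomposition of the eigenvector for the lower bound.

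For the upper bound $\lambda_k^-(\Delta_1) \le h_k$, I would select $k$ pairwise disjoint non-empty subsets $A_1, \ldots, A_k$ attaining $h_k$, i.e.\ $\max_i |\partial A_i|/\vol(A_i) = h_k$. From these I would build a centrally symmetric compact set $S \subset \R^n \setminus \{\vec 0\}$ with $\gamma^-(S) \ge k$, using the characteristic vectors $\pm\vec 1_{A_1}, \ldots, \pm \vec 1_{A_k}$; a natural candidate is the simplicial join of the antipodal pairs $\{\vec 1_{A_i}, -\vec 1_{A_i}\}$, which is homeomorphic (via an odd map) to $\mathbb{S}^{k-1}$, so that $\gamma^-(S) = k$. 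The next step is to verify that $F_1(\vec x) \le h_k$ for every $\vec x \in S$; since the supports $A_i$ are disjoint, the coarea-type formula $F_1(\vec x) \ge \frac{|\partial D|}{\vol(D)}$ for suitable level sets $D$ applied in reverse shows that the worst ratio is realized on one of the $A_i$, yielding $\sup_S F_1 \le \max_i |\partial A_i|/\vol(A_i) = h_k$. The min-max definition of $\lambda_k^-(\Delta_1)$ then finishes this direction.

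For the lower bound $h_m \le \lambda_k^-(\Delta_1)$, let $D_1, \ldots, D_m$ denote the strong nodal domains of the eigenvector $\vec x$, on each of which $\vec x$ has constant sign. The key step is a per-domain Cheeger bound: for each $j$,
\[
\frac{|\partial D_j|}{\vol(D_j)} \le \lambda.
\]
This is obtained by summing the $1$-Laplacian eigenvalue equation \eqref{eq:brief-1-Lap-1} over $i \in D_j$ with the sign dictated by $\vec x|_{D_j}$; internal edges cancel in the sum (both endpoints in $D_j$ contribute with opposite sign once we pick a consistent orientation), and only boundary edges survive, each contributing at most $1$ in absolute value, while the right-hand side sums to $\lambda \vol(D_j)$. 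Since $(D_1, \ldots, D_m)$ is an $m$-subpartition of $V$ into non-empty subsets, the definition of $h_m$ immediately gives $h_m \le \max_j |\partial D_j|/\vol(D_j) \le \lambda \le \lambda_k^-(\Delta_1)$.

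The main obstacle is expected to be the upper bound: showing that the constructed centrally symmetric set $S$ really has $\gamma^-(S) \ge k$ while simultaneously controlling $F_1$ on $S$ when the sets $A_i$ are mutually adjacent (edges between different $A_i$). The cancellations that make this work require a careful sign choice in the join construction, and verifying that the supremum of $F_1$ over $S$ is realized at a vertex $\pm \vec 1_{A_i}$ rather than in the interior of a simplex, which is where the coarea argument becomes delicate.
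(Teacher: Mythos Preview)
Your approach is correct and coincides with the standard argument from \cite{CSZ17}; the paper itself simply cites that reference without reproducing the proof. Two small remarks are worth making.

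First, the obstacle you flag for the upper bound is not actually an obstacle. For $\vec x=\sum_{i=1}^k t_i\epsilon_i\vec 1_{A_i}$ with $t_i\ge 0$, $\sum_i t_i=1$, $\epsilon_i\in\{\pm 1\}$, any edge $\{u,v\}$ with $u\in A_i$, $v\in A_j$ ($i\ne j$) contributes $|t_i\epsilon_i-t_j\epsilon_j|\le t_i+t_j$ to the numerator, so the numerator of $F_1(\vec x)$ is bounded by $\sum_i t_i|\partial A_i|$ regardless of adjacencies between the $A_i$. The mediant inequality then gives $F_1(\vec x)\le \max_i |\partial A_i|/\vol(A_i)=h_k$, with no need to locate the maximizer at a vertex.

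Second, your lower-bound reasoning as written goes in the wrong direction: saying each boundary edge ``contributes at most $1$'' yields $\lambda\vol(D_j)\le |\partial D_j|$, not the desired $|\partial D_j|\le \lambda\vol(D_j)$. The fix is that each boundary edge contributes \emph{exactly} $1$. If $D_j$ is a positive strong nodal domain and $i\in D_j$, $j'\sim i$, $j'\notin D_j$, then $x_{j'}\le 0$ (otherwise $j'$ would lie in the same strong nodal domain), hence $x_i-x_{j'}>0$ and $z_{ij'}=1$ is forced. Summing the eigenvalue equation over $i\in D_j$ therefore gives the equality $|\partial D_j|=\lambda\vol(D_j)$, from which $h_m\le\lambda\le\lambda_k^-(\Delta_1)$ follows as you indicate.
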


\begin{Claim}[
Theorem 5.1 in \cite{TudiscoHein18}]\label{claim:THp>1}
For $p>1$, let $\vec x$ be an eigenvector  corresponding to the eigenvalue $\lambda_k^-(\Delta_p)$, and assume that $\vec x$ has $m$ strong nodal domains.  Then,  $$ \frac{2^{p-1}}{p^p}h_m^p\le\lambda_k^-(\Delta_p)\le 2^{p-1}h_k.$$
Moreover, if there is an eigenpair $(\lambda,\vec x)$ of $\Delta_p$ with $\lambda\le \lambda_k^-(\Delta_p)$ and $\mathfrak{S}(\vec  x)=m$, then we still have $\frac{2^{p-1}}{p^p}h_m^p\le\lambda_k^-(\Delta_p)$, and this inequality 
 holds when we use $\lambda_k(\Delta_1)$ or $\lambda_k^+(\Delta_1)$ instead of $\lambda_k^-(\Delta_1)$. The proofs of these versions of  $\lambda_k(\Delta_1)$ and $\lambda_k^+(\Delta_1)$ are the same  as  that of Theorem 5.1 in \cite{TudiscoHein18}. 
\end{Claim}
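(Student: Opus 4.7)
The plan is to split this into the upper and lower Cheeger-type bounds, following the strategy of Tudisco--Hein but being careful to track what the argument actually needs about the index functional (so that the transfer from $\lambda_k^-$ to $\lambda_k$ and $\lambda_k^+$ is automatic).

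For the upper bound $\lambda_k^-(\Delta_p)\le 2^{p-1}h_k$, let $A_1,\ldots,A_k$ be disjoint nonempty subsets realising $h_k$, and introduce the centrally symmetric compact test set
$$S=\left\{\sum_{i=1}^k c_i\vec 1_{A_i}:(c_1,\ldots,c_k)\in\mathbb{S}^{k-1}\right\}.$$
The linear independence of the indicators makes the natural map from $\mathbb{S}^{k-1}$ an odd homeomorphism onto $S$, so $\gamma^-(S)=k$. For any $\vec x=\sum_i c_i\vec 1_{A_i}\in S$, disjointness annihilates the edges internal to each $A_i$, and on each remaining edge one applies $|c_i-c_j|^p\le 2^{p-1}(|c_i|^p+|c_j|^p)$ and sums edge-by-edge to obtain
$$F_p(\vec x)\le 2^{p-1}\,\frac{\sum_i |c_i|^p|\partial A_i|}{\sum_i|c_i|^p\vol(A_i)}\le 2^{p-1}\max_i\frac{|\partial A_i|}{\vol(A_i)}=2^{p-1}h_k,$$
and the min-max characterisation closes the bound $\lambda_k^-(\Delta_p)\le\sup_{\vec x\in S}F_p(\vec x)\le 2^{p-1}h_k$.

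For the lower bound $\tfrac{2^{p-1}}{p^p}h_m^p\le\lambda_k^-(\Delta_p)$, take an eigenpair $(\lambda,\vec x)$ of $\Delta_p$ with $\lambda\le\lambda_k^-(\Delta_p)$ whose strong nodal domains are $D_1,\ldots,D_m$, and let $\vec x_i$ be the vector equal to $\vec x$ on $D_i$ and $0$ elsewhere. The nodal-domain lemma for $\Delta_p$ --- obtained by testing the eigenequation against $\vec x_i$ and exploiting that $x_u x_v\le 0$ on each boundary edge of $D_i$ --- produces $F_p(\vec x_i)\le\lambda$. Since $\vec x_i$ has constant sign on $D_i$, I would then apply the classical discrete $p$-Cheeger inequality for nonnegative vectors (co-area plus H\"older): some superlevel set $B_i\subset D_i$ satisfies $|\partial B_i|/\vol(B_i)\le (p/2^{(p-1)/p})F_p(\vec x_i)^{1/p}$. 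The $B_i$ sit inside distinct nodal domains and are pairwise disjoint, so the definition of $h_m$ gives
$$h_m\le\max_i\frac{|\partial B_i|}{\vol(B_i)}\le \frac{p}{2^{(p-1)/p}}\lambda^{1/p},$$
which rearranges to $\tfrac{2^{p-1}}{p^p}h_m^p\le\lambda\le\lambda_k^-(\Delta_p)$.

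The extension from $\lambda_k^-$ to $\lambda_k$ and $\lambda_k^+$ is essentially formal: the upper bound survives verbatim because the explicit test set $S$ above also satisfies $\gamma(S)=\gamma^+(S)=k$ (it is odd-homeomorphic to $\mathbb{S}^{k-1}$), and the lower bound only uses the hypothesis $\lambda\le\lambda_k^\bullet(\Delta_p)$ together with the nodal count. The main obstacle I anticipate is the nodal restriction step $F_p(\vec x_i)\le\lambda$: in the linear case $p=2$ this is a one-line quadratic-form computation, but for $p>1$ the nonlinearity of $|x_u-x_v|^{p-2}(x_u-x_v)$ forces a careful edge-by-edge analysis, exploiting the fact that zeroing out $x_v$ across a sign change can only shrink the $p$-th power of the difference while the sign hypothesis controls how the denominator is affected.
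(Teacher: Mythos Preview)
Your proposal is correct and reconstructs the Tudisco--Hein argument faithfully. The paper itself does not supply a proof of this claim at all---it is stated as a citation of Theorem~5.1 in \cite{TudiscoHein18}, with only the remark that the variants for $\lambda_k$ and $\lambda_k^+$ go through by the same proof---so your sketch of the test-set upper bound and the nodal-restriction-plus-co-area lower bound is exactly what the cited source does, and your observation that the test sphere $S$ has $\gamma^+(S)=\gamma(S)=\gamma^-(S)=k$ is the right way to see the transfer to the other indices.
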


\begin{Claim}[Theorem 1.1 in \cite{LGT12}]\label{claim:multi-way-Chee}
For every graph, and each natural number $k$, 
\begin{equation*}
\frac{h^2_k}{Ck^4}\le \lambda_k(\Delta_2)\le 2h_k
\end{equation*}   
where $C$ is a universal constant. 
\end{Claim}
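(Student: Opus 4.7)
The plan is to prove the two inequalities by completely different methods. For the upper bound $\lambda_k(\Delta_2)\le 2h_k$, I would use the $k$-way Cheeger sets as a test subspace in the min-max principle. Fix disjoint $A_1,\ldots,A_k$ realizing $h_k$, and consider the $k$-dimensional subspace $W$ spanned by the indicators $\vec 1_{A_1},\ldots,\vec 1_{A_k}$; its unit sphere in the degree-weighted norm is a $(k-1)$-sphere and hence has Yang index $k$. For any $\vec x=\sum_i c_i\vec 1_{A_i}\in W$, disjointness of the $A_i$ decouples both the numerator and the denominator of $F_2$, and the elementary bound $(c_i-c_j)^2\le 2(c_i^2+c_j^2)$ on boundary edges gives $F_2(\vec x)\le 2\max_i|\partial A_i|/\vol(A_i)\le 2h_k$. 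Plugging this sphere into the Yang-index min-max characterization of $\lambda_k(\Delta_2)$ yields the upper bound.

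For the hard direction $h_k^2\le Ck^4\lambda_k(\Delta_2)$, I would follow the spectral-embedding strategy of Lee--Oveis Gharan--Trevisan. Let $\vec v_1,\ldots,\vec v_k$ be the first $k$ eigenvectors and form the map $\Phi\colon V\to\R^k$ by $\Phi(i)=(\vec v_1(i),\ldots,\vec v_k(i))$; orthonormality and the eigenvalue equation yield $\sum_{\{i,j\}\in E}\|\Phi(i)-\Phi(j)\|^2\le k\lambda_k$ and $\sum_i\deg(i)\|\Phi(i)\|^2=k$. The goal is to extract $k$ nonnegative functions $\phi_1,\ldots,\phi_k$ with pairwise disjoint supports whose Rayleigh quotients satisfy $F_2(\phi_j)\le O(k^2\lambda_k)$. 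Given such $\phi_j$, a standard Cheeger level-set sweep on each produces a set $B_j\subset\supp(\phi_j)$ with $|\partial B_j|/\vol(B_j)\le\sqrt{2F_2(\phi_j)}=O(k\sqrt{\lambda_k})$; the disjointness of the $\phi_j$ makes the $B_j$ disjoint, and squaring yields $h_k^2\le Ck^4\lambda_k$.

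The principal obstacle is the localization step producing disjointly supported $\phi_j$ with controlled Rayleigh quotients. I would address this following LGT in three sub-steps: (i) isotropize the embedding via a random linear map so that most vertices carry comparable spectral mass; (ii) cluster the vertices by nearest-center assignment among $k$ carefully chosen anchor points in the embedded image; and (iii) smooth the hard partition by a Lipschitz truncation whose length scale is randomized over a suitable geometric distribution, so that the expected gradient of the cutoff is controlled by its support radius. The truncation inflates Dirichlet energy by a factor of $O(k^2)$, which is essentially tight; this step — trading support disjointness against energy growth — is both the technical heart of the proof and the origin of the $k^4$ factor in the final bound.
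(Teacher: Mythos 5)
The first thing to note is that the paper does not prove this claim at all: it is quoted verbatim from Lee--Oveis Gharan--Trevisan \cite{LGT12} (Theorem 1.1 there) and used as a black box in the proof of Theorem \ref{thm:p-lap-C}, so there is no internal argument to compare yours against — what you propose is a reproof of the cited theorem. Your treatment of the easy half is correct and is the standard argument: on the span $W$ of $\vec 1_{A_1},\dots,\vec 1_{A_k}$ the quadratic form decouples, the bound $(c_i-c_j)^2\le 2(c_i^2+c_j^2)$ on edges between different $A_i$'s (and $c_i^2$ on edges leaving $\bigcup_j A_j$) gives $F_2(\vec x)\le 2\max_i |\partial A_i|/\vol(A_i)=2h_k$ for every $\vec x\in W\setminus\{\vec 0\}$, and the unit sphere of the $k$-dimensional subspace $W$ is a centrally symmetric compact set of Yang index $k$, so the min-max characterization (which for $p=2$ just returns the ordinary eigenvalues, cf.\ Corollary \ref{cor:=}) yields $\lambda_k(\Delta_2)\le 2h_k$.

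For the hard direction, however, what you have written is an outline of the argument of \cite{LGT12}, not a proof: the isotropy of the spectral embedding, the spreading estimate guaranteeing that each of the $k$ regions retains a definite fraction of the total mass, and the randomized smooth partition with its expected-energy bound are precisely the technical content of that paper, and none of it is carried out here. There is also a quantitative slip in your bookkeeping: disjointly supported $\phi_j$ with $F_2(\phi_j)=O(k^2\lambda_k)$ would give, after the Cheeger sweep, $h_k\le O(k)\sqrt{\lambda_k}$, i.e.\ $h_k^2\le O(k^2)\lambda_k$, which is \emph{stronger} than the cited theorem and is not what the localization of \cite{LGT12} delivers; back-solving from their final bound $h_k\le O(k^2)\sqrt{\lambda_k}$, their disjointly supported functions have Rayleigh quotients of order $k^4\lambda_k$, and only this weaker bound is needed (and available) for the claim as stated. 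If your intent is simply to invoke \cite{LGT12}, as the paper itself does, the sketch is harmless; as a standalone proof it has a genuine gap at exactly the localization step you yourself identify as the heart of the matter, and the $O(k^2\lambda_k)$ intermediate claim should in any case be corrected.
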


\noindent\textbf{Convention}: We say that  the index of a subcomplex of  $K_n$ is $k$, if the geometric realization of such subcomplex in $|K_n|$ is centrally symmetric and its index  is $k$, where the word `index' can be Yang index,  Krasnoselskii genus or the $\gamma^+$-index.

\begin{Claim}\label{claim:comb-topo-min-max-1lap}
The $k$-th min-max eigenvalue of graph 1-Laplacian has the following  combinatorial characterization:
\begin{equation}\label{eq:lambda_k-combina}
\lambda_k(\Delta_1)=\hat{h}_k=\min\limits_{\A\in \mathcal{S}_k}\max\limits_{(A,B)\in\A}\frac{|\partial A|+|\partial B|}{\vol(A\cup B)}
\end{equation}
where  $\mathcal{S}_k:=\{\A\subset \power_2(V):\text{the Yang index of the  subcomplex of }K_n\text{ induced by  }\A\text{ is at least }k\}$. 
\end{Claim}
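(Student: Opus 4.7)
The plan is to match the two min--max expressions by showing that both are attained on the simplicial structure of $K_n$. I begin with a direct edge-counting computation giving
\[F_1(\vec 1_A-\vec 1_B)=\frac{|\partial A|+|\partial B|}{\vol(A\cup B)}\qquad \forall (A,B)\in\mathcal{P}_2(V),\]
since each edge between $A$ and $B$ contributes $2$ to the numerator while each edge from $A\cup B$ to $V\setminus(A\cup B)$ contributes $1$, and the denominator is $\vol(A\cup B)$. Thus $\hat h_k$ is a min--max of $F_1$ over \emph{vertex sets} of centrally symmetric subcomplexes of Yang index at least $k$, while $\lambda_k(\Delta_1)$ is a min--max of $F_1$ over \emph{all} centrally symmetric compact subsets of $\R^n\setminus\{\vec 0\}$ with Yang index at least $k$; the claim is that these coincide.

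For the inequality $\lambda_k(\Delta_1)\le \hat h_k$, I would take any admissible family $\A\in\mathcal{S}_k$ (which is automatically closed under $(A,B)\mapsto(B,A)$, since otherwise the induced subcomplex would not be centrally symmetric and its Yang index would be zero) and use its geometric realization $S\subset|K_n|$ as a test set in the definition of $\lambda_k(\Delta_1)$. By hypothesis $\gamma(S)\ge k$. The core step is
\[\sup_{\vec x\in S}F_1(\vec x)=\max_{(A,B)\in\A}F_1(\vec 1_A-\vec 1_B),\]
which I derive by transporting via the radial homeomorphism $r^{-1}:|K_n|\to\widetilde{|K_n|}$ from the proof of Theorem \ref{thm:tri-link}: on $\widetilde{|K_n|}$ the function $F_1$ is linear on every maximal simplex, so its maximum on any subcomplex is attained at a vertex, and zero-homogeneity of $F_1$ transfers the equality back to the corresponding vertices of $|K_n|$.

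For the reverse inequality $\hat h_k\le\lambda_k(\Delta_1)$, let $S\subset\R^n\setminus\{\vec 0\}$ be centrally symmetric with $\gamma(S)\ge k$, set $t=\sup_{\vec x\in S}F_1(\vec x)$, and consider $\A_t=\{(A,B)\in\mathcal{P}_2(V):F_1(\vec 1_A-\vec 1_B)\le t\}$. Zero-homogeneity lets me replace $S$ by $S/\|\cdot\|_\infty\subset X_\infty$, so $S\subset\{F_1\le t\}\cap X_\infty$ and hence $\gamma(\{F_1\le t\}\cap X_\infty)\ge k$. Transporting to $\widetilde{|K_n|}$ via $r^{-1}$, I apply the K\"uhnel-type equivalence (Claim \ref{claim:PL-kuhnel}) between the PL sublevel set $\{F_1\le t\}$ and the induced subcomplex on $\A_t$. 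Provided this equivalence is arranged $\mathbb{Z}_2$-equivariantly, the Yang index is preserved, so $\A_t\in\mathcal{S}_k$; by construction $\max_{(A,B)\in\A_t}F_1(\vec 1_A-\vec 1_B)\le t$, and taking the infimum over $S$ finishes the argument.

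The main obstacle is precisely the equivariance of K\"uhnel's deformation. The evenness of $F_1$, the central symmetry of the triangulation $K_n$, and the natural retraction that pushes each point of a simplex onto the face spanned by its vertices of value $\le t$ are all compatible with the antipodal involution $\vec x\mapsto -\vec x$ (which on vertices is the swap $(A,B)\mapsto(B,A)$), so the construction can be made equivariant; still, upgrading Claim \ref{claim:PL-kuhnel} from ordinary homology to a statement that preserves the Yang index requires carrying the $\mathbb{Z}_2$-action through each step of the proof (or, alternatively, invoking the $\mathbb{Z}_2$-equivariant homotopy invariance of the Yang index directly), and this is the nontrivial technical point on which the whole identification rests.
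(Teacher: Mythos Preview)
Your approach is essentially the same as the paper's: both arguments rest on transporting to $\widetilde{|K_n|}$ where $F_1$ is simplexwise linear, invoking the K\"uhnel-type equivalence (Claim \ref{claim:PL-kuhnel}) between sublevel sets and induced subcomplexes, and then using invariance of the Yang index under (odd) homotopy equivalence. The paper packages this as a general statement for arbitrary odd PL functions on $|K_n|$, reducing first to functions injective on vertices via a density argument and a deformation step; you bypass that reduction by working directly with $F_1$ and sublevel sets, which is slightly leaner but lands on the identical core step $\gamma(\{F_1\le t\})=\gamma(K_n|_{\{F_1\le t\}})$.

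On the technical point you flag: the paper also uses the equality of Yang indices without explicitly verifying equivariance of K\"uhnel's retraction, so you are not missing anything relative to the paper---you have simply made explicit the one place where both proofs lean on the (correct and straightforward) observation that the natural deformation retraction is compatible with the antipodal involution on $K_n$.
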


\begin{proof}

We shall 
prove a  general statement:  for any  odd piecewise linear function $F:|K_n|\to\R$ (resp.,  $F:\widetilde{|K_n|}\to\R$) that is linear on each simplex of $|K_n|$ (resp.,  $\widetilde{|K_n|}$), we have
\begin{equation}\label{eq:infsup=minmax}
\inf\limits_{\gamma(S)\ge k}\sup\limits_{\vec x\in S}F(\vec x)=\min\limits_{\A\in \mathcal{S}_k}\max\limits_{(A,B)\in\A}F(\vec1_A-\vec1_B)
,\end{equation}
where $\widetilde{|K_n|}$ is another geometric realization of $K_n$ and is also  a  triangulation of  $X_1:=\{\vec x\in\R^n:\sum_{i=1}^n\deg(i)|x_i|=1\}$. We refer  the reader to the proof of Theorem \ref{thm:tri-link} for the construction of $\widetilde{|K_n|}$.

To show the proof of the above statement, 
we can assume without loss of generality that $F$ is injective on the vertices of any  simplex of $|K_n|$, that is, $F(\vec v)\ne F(\vec u)$ whenever $\vec v$ and $\vec u$ are two different vertices of a simplex in $|K_n|$. 
In fact, if \eqref{eq:infsup=minmax} holds for all such `injective' $F$, it holds also neglecting the injectivity. The reason is as follows. In fact, let $\mathcal{F}:=\{\text{odd continuous function }F:|K_n|\to\R\text{ that is linear on each simplex of }|K_n|\}$ and $\mathcal{F}_{in}:=\{F\in\mathcal{F}\left|\;F\text{ is injective on the vertices of }|K_n|\right.\}$. Clearly, $\mathcal{F}_{in}$ is an open dense subset of $\mathcal{F}$ where we use the topology induced by the maximum norm $\|\cdot\|_\infty$. 
Note that the min-max quantities on both side of \eqref{eq:infsup=minmax} are preserved under uniform convergence, that is, if $f_n\in \mathcal{F}$ and $f_n\rightrightarrows f$ as $n\to+\infty$, then 
\[\lim_{n\to\infty}\inf\limits_{\gamma(S)\ge k}\sup\limits_{\vec x\in S}f_n(\vec x)=\inf\limits_{\gamma(S)\ge k}\sup\limits_{\vec x\in S}f(\vec x)\;\text{ and }\;\lim_{n\to\infty}\min\limits_{\A\in \mathcal{S}_k}\max\limits_{(A,B)\in\A}f_n(\vec1_A-\vec1_B)=\min\limits_{\A\in \mathcal{S}_k}\max\limits_{(A,B)\in\A}f(\vec1_A-\vec1_B).\]
Thus, to prove the equality \eqref{eq:infsup=minmax}
 for any $F\in\mathcal{F}$, it suffices to prove \eqref{eq:infsup=minmax}
 for any $F\in\mathcal{F}_{in}$.

Now, given a function $F\in\mathcal{F}_{in}$, 
for any centrally symmetric  compact subset $S\subset|K_n|$ with $\gamma(S)\ge k$, denote by $S_F$ the set of the maximizers of $F|_S$ (i.e., the restriction of $F$ on $S$).   
According to the linearity of $F$ on each simplex of $|K_n|$, and the injectivity of $F$ on the vertex set of any  simplex of $|K_n|$, it is not difficult to show that the critical points of $F$ must be vertices of $|K_n|$. 
So, if $S_F$ doesn't contain any   vertex of $|K_n|$,  there is no critical point of $F$ in $S_F$. 
Thus, by the deformation theorem in nonlinear analysis, we can take a small perturbation $S'$ of $S$ such that thay are odd-homotopy equivalent 
and  
$$\sup\limits_{\vec y\in S'}F(\vec y)<F(\vec s)=\sup\limits_{\vec x\in S}F(\vec x).$$
At this time, we also have $\gamma(S')\ge k$.  

Therefore, we only need to consider such $S$ with the additional property that $S_F$ contains some vertices of $|K_n|$. In other words, we assume that 
$\max_{\vec x\in S} F(\vec x)$ is achieved at some vertex $\vec v$  of $|K_n|$, i.e., 
$F(\vec v)\ge F(\vec x)$, $\forall \vec x\in S$. Let $\lambda=F(\vec v)$.  Consider the sublevel set $\{F\le \lambda\}$. It is clear that $\gamma(\{F\le \lambda\})\ge \gamma(S)\ge k$ and 
$$\max\limits_{\vec x\in \{F\le \lambda\}}F(\vec x)=\lambda=F(\vec v)=\max\limits_{\vec x\in  S}F(\vec x).$$

\vspace{0.06cm}

By Claim \ref{claim:PL-kuhnel}, for any $c\in\R$,  there is a homotopy equivalence between the sublevel set $\{F\le c\}$ and  the induced  subcomplex  $K_n|_{\{F\le c\}}$ of $|K_n|$, where $K_n|_{\{F\le c\}}$ 
denotes the induced closed subcomplex of $|K_n|$ on the vertices lying in the sublevel set $\{F\le c\}$.   
Thus,  $\gamma(\{F\le c\})=\gamma(K_n|_{\{F\le c\}})$. In consequence, we have the following identities
\begin{align*}
\inf\limits_{S\in\mathrm{Ind}_k}\sup\limits_{\vec x\in S} F(\vec x)&=\inf\limits_{c\in\R\text{ s.t.}\,\{F\le c\})\in\mathrm{Ind}_k}\;\sup\limits_{\vec x\in \{F\le c\}} F(\vec x)
\\&=\min\limits_{c\in\R\text{ s.t.}\,K_n|_{\{F\le c\}}\in \mathrm{Ind}_k}\max\limits_{\text{ vertex }\vec v\text{ of }K_n|_{\{F\le c\}}}F(\vec v)
\\&=\min\limits_{\A\in \mathcal{S}_k}\max\limits_{(A,B)\in\A} F(\vec1_A-\vec1_B),
\end{align*}
where $\mathrm{Ind}_k:=\{S\subset |K_n|:\gamma(S)\ge k\}$, and in the last equality,  we identify  the collection of the induced subcomplexes in $\mathrm{Ind}_k$  with  $\mathcal{S}_k$,  because any vertex $\vec v$ of $K_n$  is in the form of  $\vec1_A-\vec1_B$ which can be   identified with the set-pair   $(A,B)\in\mathcal{P}_2(V)$. Also, we have used the fact that   on each induced subcomplex of $|K_n|$,  $F$ reaches the  maximum at some vertices. The proof of \eqref{eq:infsup=minmax}  is then completed.

Now, taking $F=F_1$ in the above equality \eqref{eq:infsup=minmax}, it is easy to  check that
$$F_1(\vec1_A-\vec1_B)=\frac{|\partial A|+|\partial B|}{\vol(A\cup B)},$$ 
which implies 
$$\lambda_k(\Delta_1)= \inf\limits_{S\in \mathrm{Ind}_k}\sup\limits_{\vec x\in S} F_1(\vec x)=\min\limits_{\A\in \mathcal{S}_k}\max\limits_{(A,B)\in\A}\frac{|\partial A|+|\partial B|}{\vol(A\cup B)}=\hat{h}_k.$$
The proof of \eqref{eq:lambda_k-combina} is completed.
\end{proof}

\begin{remark}
The above proof of Claim \ref{claim:comb-topo-min-max-1lap} indeed uses
a similar approach developed in the author's previous work \cite{JostZhang-Morse}. 
Also, we should note that any $S\in\mathrm{Ind}_k$ realizing  $c_k:=\inf_{S\in\mathrm{Ind}_k}\sup_{\vec x\in S}F(\vec x)$ 
contains a critical point of $F$  corresponding to the critical value $c_k=\max_{\vec x\in S}F(\vec x)$. 
Moreover, we can similarly prove that 
\begin{equation}\label{eq:lambda_k-combina+-}
\lambda_k^\pm(\Delta_1)=\hat{h}_k^\pm=\min\limits_{\A\in \mathcal{S}_k^\pm}\max\limits_{(A,B)\in\A}\frac{|\partial A|+|\partial B|}{\vol(A\cup B)}   
\end{equation}
where  $\mathcal{S}_k^\pm:=\{\A\subset \power_2(V):\gamma^\pm(\text{the  subcomplex of }K_n\text{ induced by  }\A)\ge k\}$, as both $\gamma^-$ and $\gamma^+$ are 
homotopy invariants. 
\end{remark}

\begin{proof}[Proof of Theorem  \ref{thm:p-lap-C}] The inequality $\lambda_k^-(\Delta_p) \le\lambda_k(\Delta_p)  \le\lambda_k^+(\Delta_p)$ has been shown in  \eqref{eq:inequality-Yang+}. Hence, $\mathsf{S}_{k}^-(\Delta_q)\subset \mathsf{S}_{k}(\Delta_q)\subset \mathsf{S}_{k}^+(\Delta_q)$ and therefore $s_k^-\le s_k\le s_k^+$. 
Analogously, the relation $\gamma^+\le\gamma\le \gamma^-$ simply implies $\mathcal{S}_k^+\subset\mathcal{S}_k\subset\mathcal{S}_k^- $ and thus $\hat{h}_k^-\le\hat{h}_k\le \hat{h}_k^+$.

  By Theorem \ref{thm:most}, the function 
$p\mapsto2^{-p}\lambda_k(\Delta_p)$ is decreasing on $[1,+\infty)$, which implies $2^{-1}\lambda_k(\Delta_1)\ge 2^{-p}\lambda_k(\Delta_p)$, $\forall p\ge 1$. So, we have $\lambda_k(\Delta_p)\le 2^{p-1}\lambda_k(\Delta_1)$. 
Together with  $\hat{h}_k=\lambda_k(\Delta_1)$ by Claim \ref{claim:comb-topo-min-max-1lap},  and   $\lambda_k(\Delta_1)\le h_k$ by Claim \ref{claim:CSZ}, we obtain the upper bound estimate:   \[\lambda_k(\Delta_p)\le 2^{p-1}\hat{h}_k\le 2^{p-1}h_k.\] 

Let us move on to the lower bound estimate. By  Theorem \ref{thm:most}, the function  $p\mapsto p(2\lambda_k(\Delta_p))^{\frac1p}$ 
is increasing  on $[1,+\infty)$, which yields $2\lambda_k(\Delta_1)\le p(2\lambda_k(\Delta_p))^{\frac1p}$ for any $p\ge 1$. Hence, $\frac{2^{p-1}}{p^p} \lambda_k(\Delta_1)^p\le \lambda_k(\Delta_p)$. Since $\hat{h}_k=\lambda_k(\Delta_1)$, we get the  inequality   $\lambda_k(\Delta_p)\ge \frac{2^{p-1}}{p^p}\hat{h}_k^p$. 

Let $\vec x$ be an eigenvector  corresponding to  some eigenvalue $\lambda$ of $\Delta_1$ with   $\lambda\le\lambda_k(\Delta_1)$, and assume that $\vec x$ has $m$  strong  nodal domains. Then, by Claims \ref{claim:CSZ} and \ref{claim:comb-topo-min-max-1lap}, $\hat{h}_k=\lambda_k(\Delta_1)\ge h_m$. Therefore, we have  \begin{equation}
\label{eq:lower-bound-1-Lap}\lambda_k(\Delta_p)\ge \frac{2^{p-1}}{p^p}\hat{h}_k^p\ge \frac{2^{p-1}}{p^p}h_m^p.
\end{equation}

For $p\ge q>1$, Theorem \ref{thm:most} also implies  $p(2\lambda_k(\Delta_p))^{\frac1p}\ge q(2\lambda_k(\Delta_q))^{\frac1q}$, which can be reformulated as $\lambda_k(\Delta_p)\ge 2^{\frac pq-1}(\frac qp)^p\lambda_k(\Delta_q)^{\frac pq}$ for any $q\le p$. 
Let $\vec x$ be an eigenvector  corresponding to an   eigenvalue $\lambda$ of $\Delta_q$ with   $\lambda\le\lambda_k(\Delta_q)$, and let $m$ be the number of the strong nodal domains of  $\vec x$. Then, by Claim \ref{claim:THp>1},  $\lambda_k(\Delta_q)\ge \frac{2^{q-1}}{q^q}h_m^q$. Thus,  we have \begin{equation}\label{eq:lower-bound-q-Lap}
\lambda_k(\Delta_p)\ge 2^{\frac pq-1}\left(\frac qp\right)^p\lambda_k(\Delta_q)^{\frac pq}\ge 2^{\frac pq-1}\left(\frac qp\right)^p\left(\frac{2^{q-1}}{q^q}h_m^q\right)^{\frac pq}=\frac{2^{p-1}}{p^p}h_m^p.    
\end{equation}

Since $h_1\le h_2\le \cdots\le h_n$, we may take $s_k$ to be the largest $m$ such that   $m$ is the   number of the strong  nodal domains of some eigenvector corresponding to some eigenvalue less than or equal to  $\lambda_k(\Delta_q)$ for some $q\in[1,p]$. Then, we simply have  $\lambda_k(\Delta_p)\ge  \frac{2^{p-1}}{p^p}h_{s_k}^p$ by the inequalities \eqref{eq:lower-bound-1-Lap} and \eqref{eq:lower-bound-q-Lap}. The analogous  inequalities on   $\lambda_k^-(\Delta_p)$ and $\lambda_k^+(\Delta_p)$ can be derived in the same way. 



\vspace{0.2cm}

Note that for the 2-Laplacian, we always have $\lambda_k^-(\Delta_2)=\lambda_k(\Delta_2)=\lambda_k^+(\Delta_2)$, $\forall k$. To complete the whole proof of Theorem  \ref{thm:p-lap-C},  
we should use 
the  multi-way Cheeger inequality proposed in \cite{LGT12} (see  Claim \ref{claim:multi-way-Chee}): 
 for every graph, and each natural number $k$, 
\begin{equation*}
\lambda_k^-(\Delta_2)=\lambda_k(\Delta_2)\ge \frac{h^2_k}{\overline{C}k^4}
\end{equation*}   
where $\overline{C}$ is a universal constant. 

Again, using Theorem \ref{thm:most}, we have the following simple estimates:
\begin{enumerate}[{Case}  1.] 
\item $1\le p\le 2$

In this case, we use the fact that the function 
$p\mapsto2^{-p}\lambda_k^-(\Delta_p)$ is decreasing. This implies  
$2^{-p}\lambda_k^-(\Delta_p)\ge 2^{-2}\lambda_k^-(\Delta_2)$. Thus,  $\lambda_k^-(\Delta_p)\ge 2^{p-2}\lambda_k^-(\Delta_2)\ge 2^{p-2}\frac{h^2_k}{\overline{C}k^4}\ge \frac{h^2_k}{2\overline{C}k^4}$. Taking $C=2\overline{C}$, we have 
$\lambda_k^-(\Delta_p)\ge h^2_k/Ck^4$.
\item $p\ge 2$

In this case, we use the fact that the function  $p\mapsto p(2\lambda_k^-(\Delta_p))^{\frac1p}$ 
is increasing. This implies  $p(2\lambda_k^-(\Delta_p))^{\frac1p}\ge 2(2\lambda_k^-(\Delta_2))^{\frac12}$. Hence, $$\lambda_k^-(\Delta_p)\ge 2^{\frac p2-1}\left(\frac 2p\right)^p\lambda_k^-(\Delta_2)^{\frac p2}\ge 2^{\frac p2-1}\left(\frac 2p\right)^p \left(\frac{h^2_k}{\overline{C}k^4}\right)^{\frac p2}=\frac{h^p_k}{2^{1-\frac32p}p^p\overline{C}^{\frac p2} k^{2p}}.$$
Taking $C_p=2^{1-\frac32p}p^p\overline{C}^{\frac p2}$, we have $\lambda_k^-(\Delta_p)\ge\frac{h^p_k}{C_pk^{2p}}$.
\end{enumerate}
In summary, we obtain
$$
\lambda_k^-(\Delta_p)\ge \begin{cases}\frac{h^2_k}{Ck^4},&\text{ if } 1\le p\le 2,\\
\frac{h^p_k}{C_pk^{2p}},&\text{ if } p\ge 2.
\end{cases}$$
We have completed the whole proof.
\end{proof}


\begin{proof}[Proof of Corollary \ref{cor:=}
] The $\lambda_k^-(\Delta_1)=h_k$ has been shown in \cite{DFT21}, due to the nodal domain estimates on forests. 
The proof of the  equality $\lambda^-_2(\Delta_p)=\lambda^+_2(\Delta_p)$ is standard, due to the mountain pass characterization, and thus we omit it. 

Together with Theorem  \ref{thm:p-lap-C} and Corollary \ref{cor:independence-number}, we complete the proof of these equalities.
\end{proof}

\begin{remark}\label{rem:Cheeger-p-1}
We note that Cheeger-type inequalities  essentially reflect   the connections between the spectra of $\Delta_p$ and $\Delta_1$. For example, the usual Cheeger inequality on graphs is nothing but a  relation 
between the principal  eigenvalues of  $\Delta_2$ and $\Delta_1$.  Lee-Oveis Gharan-Trevisan's multi-way Cheeger  inequality reveals    a certain   relationship between the higher-order  eigenvalues of $\Delta_2$ and $\Delta_1$.  Tudisco-Hein's higher-order Cheeger inequality for the graph $p$-Laplacian   establishes some estimates between the higher-order variational  eigenvalues of $\Delta_p$  and $\Delta_1$. In addition,  the monotonicity problem proposed by Amghibech is intended to 
find a comparison theorem for the eigenvalues of $\Delta_p$  and $\Delta_q$, for any given $p,q>1$. 
\end{remark}

\begin{remark}\label{rem:p-evolution}
The graph 2-Laplacian is a linear operator, while 
the graph 1-Laplacian (resp., graph $\infty$-Laplacian\footnote{The graph $\infty$-Laplacian  $\Delta_\infty$ is defined as $\Delta_\infty\vec x=\partial \max_{\{i,j\}\in E}|x_i-x_j|$, where $\partial$ indicates the Clarke subgradient.   
We do not  study $\infty$-Laplacian 
in this paper, but we would like to present here that $\Delta_\infty$ can also be  seen as a limit of $\Delta_p$, as $p\to+\infty$. Precisely, $\Delta_\infty\vec x=\lim\limits_{\substack{\hat{\vec x}\to\vec x\\ p\to+\infty}}(\sum_{\{i,j\}\in E}|\hat{x}_i-\hat{x}_j|^p)^{\frac1p-1}\Delta_p\hat{\vec x}$.}) can be viewed as a combinatorial operator. It is interesting that we can regard 
the graph $p$-Laplacian as a  non-linear evolution from the linear case  (i.e., 2-Laplacian) to the combinatorial case (i.e.,  1-Laplacian and  $\infty$-Laplacian). 
\end{remark}

\subsection{Distribution of eigenvalues for  $p$-Laplacians}\label{sec:distribution-gap}

In this section, we shall prove Theorem  \ref{thm:combina-estimate}. 
\begin{proof}[Proof of Theorem  \ref{thm:combina-estimate}]
By Corollary \ref{cor:independence-number}, $\lambda_{n-\alpha_*(G)+1}(\Delta_1)=\cdots=\lambda_{n}(\Delta_1)=\lambda_{n-\alpha_*(G)+1}^-(\Delta_1)=\cdots=\lambda_{n}^-(\Delta_1)=1$, which implies that there are at least  $\alpha_*(G)$ min-max eigenvalues of $\Delta_1$ equal to $1$. Here,  $\alpha_*(G)$ is the pseudo-independence  number introduced in \cite{Zhang18} (see Corollary \ref{cor:independence-number} in  Section \ref{sec:min-max-1-Lap} for the definition). 

Then, by Theorem \ref{thm:most} (or Theorem  \ref{thm:p-lap-C}), for $p>1$, 
$$\lambda_n(\Delta_p)\ge \cdots\ge \lambda_{n-\alpha_*(G)+1}(\Delta_p)> \frac{2^{p-1}}{p^p}$$
meaning that there are at least  $\alpha_*(G)$ eigenvalues (counting multiplicity) of $\Delta_p$  larger than
$\frac{2^{p-1}}{p^p}$ when $p>1$. 

To show  the spectral gap for  min-max eigenvalues of $\Delta_p$ on connected graphs, we recall the following surprising  result  on the largest Laplacian spectral gap from 1:

\begin{Claim}[\cite{JMZ21}]\label{thm:JMZ}
For any connected graph  on $n\geq 3$ nodes,
\begin{equation*}
  \min\limits_{1\le k\le n}|\lambda_k(\Delta_2)-1| \leq \frac{1}{2}.
\end{equation*}

\end{Claim}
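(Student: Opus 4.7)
The plan is to argue by contradiction: suppose that for some connected graph on $n \geq 3$ nodes every eigenvalue $\lambda_k(\Delta_2)$ lies in $[0,1/2) \cup (3/2,2]$. Introducing the quadratic polynomial $P(t) = (t - 1/2)(t - 3/2)$, this hypothesis is equivalent to the operator $P(\Delta_2) = \Delta_2^2 - 2\Delta_2 + (3/4)I$ being strictly positive definite in the $D$-inner product $\langle x, y \rangle_D = \sum_i \deg(i)\, x_i y_i$, under which $\Delta_2$ is self-adjoint with spectrum $\{\lambda_k\}$. The whole strategy is to extract enough contradictions from this positive definiteness.

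The first step is a trace computation. From $\sum_k \lambda_k = n$ together with the standard identity $\sum_k \lambda_k^2 = n + 2\sum_{\{i,j\} \in E} \frac{1}{\deg(i)\deg(j)}$ (which follows by expanding $\operatorname{tr}((I - D^{-1/2}AD^{-1/2})^2)$), one obtains
\[
\operatorname{tr}(P(\Delta_2)) \;=\; -\tfrac{n}{4} + 2\sum_{\{i,j\}\in E} \frac{1}{\deg(i)\deg(j)},
\]
and positive definiteness forces the right-hand side to be strictly positive. This already rules out every $d$-regular graph with $d \geq 4$, since there the trace equals $-n/4 + n/d \leq 0$. To extend to other graphs I would test against the coordinate vectors: a direct computation gives $\langle e_i, P(\Delta_2) e_i \rangle_D = -\deg(i)/4 + \sum_{j \sim i} 1/\deg(j)$, so any vertex $i$ with $\sum_{j \sim i} 1/\deg(j) \leq \deg(i)/4$ delivers an immediate contradiction.

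For the remaining low-degree graphs (paths, cycles, trees, cubic graphs), single-vertex test vectors are too crude. I would instead build test vectors supported on two adjacent vertices $\{i,j\}$: a short calculation shows that on the span of $e_i, e_j$ the Rayleigh quotient $F_2$ realises both values $1/2$ and $3/2$ precisely when $\deg(i)\deg(j) \leq 4$, so any such ``light'' edge produces a two-dimensional subspace on which $P(\Delta_2)$ becomes non-positive, yielding the desired contradiction via the Courant--Fischer min-max principle. When every edge has $\deg(i)\deg(j) > 4$, the graph is essentially $d$-regular with $d \geq 3$, and I would iterate along a spanning subgraph or exploit the fixed contribution $P(\lambda_1) = P(0) = 3/4$ from the trivial eigenvalue to sharpen the trace inequality.

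The main obstacle is that the bound is sharp: $K_3$ has non-trivial eigenvalues equal to exactly $3/2$, so the proof cannot afford to lose any constants. The hardest case is that of cubic graphs ($d = 3$), where the trace computation gives $\operatorname{tr}(P(\Delta_2)) = n/12 > 0$ and single-vertex testing also fails to produce a contradiction; here one must genuinely exploit the graph's combinatorial structure, perhaps via an averaged test-vector argument along a matching, or via a more elaborate positive-semidefinite inequality that simultaneously encodes the edge weights, the positivity constraint from the hypothesis, and the contribution of $\ker(\Delta_2)$.
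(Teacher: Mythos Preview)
The paper does not prove this claim; it is quoted verbatim from the external reference \cite{JMZ21} and used as a black box in the proof of Theorem~\ref{thm:combina-estimate}. There is therefore no in-paper argument to compare against.

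On its own merits your sketch is incomplete, and one of its central steps is wrong. You already acknowledge that the cubic case is left open. More seriously, the two-vertex step does not do what you claim: the fact that $F_2$ ranges over $[1/2,3/2]$ on $\mathrm{span}(e_i,e_j)$ says nothing about the sign of $\langle x,P(\hat\Delta_2)x\rangle_D$ there, because $\langle x,\hat\Delta_2^2 x\rangle_D=\|\hat\Delta_2 x\|_D^2$ is in general strictly larger than $F_2(x)^2\|x\|_D^2$. For two adjacent interior vertices $i,i{+}1$ of a long path (both of degree $2$, so $d_id_j=4$) a direct computation gives $\langle e_i,P(\hat\Delta_2)e_i\rangle_D=\langle e_{i+1},P(\hat\Delta_2)e_{i+1}\rangle_D=\tfrac12$ and $\langle e_i,P(\hat\Delta_2)e_{i+1}\rangle_D=0$, so the compression of $P(\hat\Delta_2)$ to this subspace is $\tfrac12 I$, which is positive definite. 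Nor does ``the Rayleigh quotient hits $[1/2,3/2]$'' force any eigenvalue into that interval, since a Rayleigh quotient is merely a convex combination of eigenvalues. Finally, the assertion that ``every edge has $\deg(i)\deg(j)>4$'' makes the graph essentially regular is false (take, e.g., a bipartite graph with sides of degree $2$ and $3$). Your trace computation is correct and does eliminate $d$-regular graphs with $d\ge 4$, but paths, cycles, trees, cubic graphs, and irregular low-degree graphs all survive your tests and need a genuinely different idea.
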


By Claim  \ref{thm:JMZ}
, for any  connected graph with $n\geq 3$ vertices, there exists $l\in\{1,\cdots,n\}$ such that 
$\frac12\le\lambda_l(\Delta_2)\le\frac32$. By employing Theorem \ref{thm:most}, we simply  have: 
$$
\lambda_l(\Delta_p)>\begin{cases}2^{p-2}\lambda_l(\Delta_2)\ge 2^{p-3},&\text{ if } 1\le p< 2,\\
2^{\frac p2-1}(\frac 2p)^p\lambda_l(\Delta_2)^{\frac p2}\ge 2^{\frac p2-1}(\frac 2p)^p(\frac12)^{\frac p2}=\frac{2^{p-1}}{p^p},&\text{ if } p> 2,
\end{cases}$$
and 
$$
\lambda_l(\Delta_p)<\begin{cases}2^{\frac p2-1}(\frac 2p)^p\lambda_l(\Delta_2)^{\frac p2}\le 2^{\frac p2-1}(\frac 2p)^p(\frac32)^{\frac p2}=3^{\frac p2}\cdot\frac{2^{p-1}}{p^p},&\text{ if } 1\le p< 2,\\
2^{p-2}\lambda_l(\Delta_2)\le 3\times2^{p-3},&\text{ if } p> 2.
\end{cases}$$
That is, $2^{p-3}<\lambda_l(\Delta_p)<3^{\frac p2}\cdot\frac{2^{p-1}}{p^p}$ if $1\le p<2$, and $\frac{2^{p-1}}{p^p}<\lambda_l(\Delta_p)<3\cdot 2^{p-3}$ if $p>2$. This  completes the proof of Theorem  \ref{thm:combina-estimate}. \end{proof}

\section{Non-variational  eigenvalues}
\label{sec:nonvariational}
\subsection{Homological non-variational  eigenvalues 
}
\label{sec:verification}


For readers' convenience, we redraw the picture shown in Theorem \ref{thm:main} again: 
\begin{center}
\begin{tikzpicture}[scale=1.5]
\draw (0,0)--(0,2);
\draw (1.732,1)--(0,2);
\draw (-1.732,1)--(0,2);
\draw (-1.732,1)--(0,0);
\draw (0,2)--(0.577,1)--(0,0)--(-0.577,1)--(0,2);
\draw (0.577,1)--(1.732,1);
\draw (-0.577,1)--(-1.732,1);
\node (1) at  (0,-0.2) {$2$};\node (1) at  (0,0) {$\bullet$};
\node (2) at  (1.782,1.2) {$6$};\node (2) at  (1.732,1) {$\bullet$};
\node (3) at  (0,2.2) {$1$};\node (3) at  (0,2) {$\bullet$};
\node (4) at  (-1.782,1.2) {$4$};\node (4) at  (-1.732,1) {$\bullet$};
\node (123) at  (0.6,1.2) {$5$};\node (123) at  (0.577,1) {{$\bullet$}};
\node (134) at  (-0.6,1.2) {$3$};\node (134) at  (-0.577,1) {{$\bullet$}};
\end{tikzpicture}
\end{center}

Let us go into the details of the computation of  $\lambda_k(\Delta_1)$. 
First, it is known that $\lambda_1(\Delta_1)=0$ and $\lambda_2(\Delta_1)=h_2(G)=\frac25$ because we generally have \[\lambda_2^-(\Delta_1)=\lambda_2(\Delta_1)=\lambda_2^+(\Delta_1)=\hat{h}_2^-=\hat{h}_2=\hat{h}_2^+=h_2\]
due to the Cheeger equality $\lambda_2^-(\Delta_1)=h_2$ in \cite{HeinBuhler2010,Chang} and Theorem  \ref{thm:p-lap-C}. 
Here and after, $h_k(G)$ indicates the $k$-th multi-way Cheeger constant on the graph $G$ (see \eqref{eq:k-way-Cheeger} for the definition).

By Corollary \ref{cor:independence-number}, we have $\lambda_4(\Delta_1)=\lambda_5(\Delta_1)=\lambda_6(\Delta_1)=1$. It remains to compute  
$\lambda_3(\Delta_1)$. 

Let $P=\{V_1,V_2\}$ be a partition of $V$ with $V_1=\{2,3,4\}$ and $V_2=\{1,5,6\}$. Then $c(P)=4$, and $h_*(P)=\frac57$. Thus, by Lemma \ref{lemma:minmax-eigen-lower},  $\lambda_3(\Delta_1)\ge \lambda_3^-(\Delta_1)=\lambda_{6-c(P)+1}^-(\Delta_1)\ge h_*(P)=\frac57$. On the other hand, by the multi-way Cheeger inequality (see Claim \ref{claim:CSZ}),  $\lambda_3(\Delta_1)\le\lambda_3^+(\Delta_1)\le h_3(G)=\frac57$, which implies  $\lambda_3(\Delta_1)=\frac57$.

Therefore, we have determined all the min-max eigenvalues  of $\Delta_1$. 
Note that on the graph $G$ shown above, we have actually proved that $\lambda^-_k(\Delta_1)=\lambda_k(\Delta_1)=\lambda^+_k(\Delta_1)$, $k=1,2,3,4,5,6$. 

Now, we are able to prove our main result in this section. 
\begin{proof}[Proof of Theorem \ref{thm:main}]
Since we have written down the min-max eigenvalues of $\Delta_1$,  $\frac 59$ is not a min-max eigenvalue. The rest of the  
verification  is to  prove that $\frac 59$ is a  homological   eigenvalue of $\Delta_1$.

Note that $F_1(\vec1_{\{2,5,6\}})=\frac59$. 
In order to use Theorem \ref{thm:tri-link}, we  list all the values of $F_1$ acting on the vertices of $  \mathrm{link}(\vec 1_{\{2,5,6\}})$:
\begin{itemize}
\item 
$F_1(\vec 1_{\{2,5,6,3,4\}}-\vec 1_{\{1\}})=\mathtt{\frac12}$, $F_1(\vec 1_{\{2,5,6,3,1\}}-\vec 1_{\{4\}})=F_1(\vec 1_{\{2,5,6,4,1\}}-\vec 1_{\{3\}})=\mathtt{\frac{3}{10}}$,
\item $F_1(\vec 1_{\{2,5,6,1\}}-\vec 1_{\{3,4\}})=\mathtt{\frac25}$, $F_1(\vec 1_{\{2,5,6,3\}}-\vec 1_{\{1,4\}})=F_1(\vec 1_{\{2,5,6,4\}}-\vec 1_{\{1,3\}})=\frac35$, 

\item 
$F_1(\vec 1_{\{2,5,6,1\}}-\vec 1_{\{3\}})=F_1(\vec 1_{\{2,5,6,1\}}-\vec 1_{\{4\}})=\mathtt{\frac{7}{17}}$, $F_1(\vec 1_{\{2,5,6,3\}}-\vec 1_{\{4\}})=F_1(\vec 1_{\{2,5,6,4\}}-\vec 1_{\{3\}})=\frac35$, \item 
 $F_1(\vec 1_{\{2,5,6,3\}}-\vec 1_{\{1\}})=F_1(\vec 1_{\{2,5,6,4\}}-\vec 1_{\{1\}})=\frac{11}{17}$,
 \item $F_1(\vec 1_{\{2,5,6,3\}})=F_1(\vec 1_{\{2,5,6,4\}})=\mathtt{\frac12}$,  $F_1(\vec 1_{\{2,5,6,1\}})=\mathtt{\frac27}$, $F_1(\vec 1_{\{2,5,6,3,4\}})=\mathtt{\frac13}$,
 \item $F_1(\vec 1_{\{2,5,6,3,1\}})=F_1(\vec 1_{\{2,5,6,4,1\}})=\mathtt{\frac{3}{17}}$, $F_1(\vec 1_{\{2,5,6,3,4,1\}})=\mathtt{0}$, 
 \item $F_1(\vec 1_{\{2,5,6\}}-\vec 1_{\{3,1\}})=F_1(\vec 1_{\{2,5,6\}}-\vec 1_{\{4,1\}})=\frac{11}{17}$, $F_1(\vec 1_{\{2,5,6\}}-\vec 1_{\{3,4\}})=\frac{3}{5}$, 
 \item $F_1(\vec 1_{\{2,5,6\}}-\vec 1_{\{3\}})=F_1(\vec 1_{\{2,5,6\}}-\vec 1_{\{4\}})=\frac{2}{3}$, $F_1(\vec 1_{\{2,5,6\}}-\vec 1_{\{1\}})=\frac{5}{7}$, $F_1(\vec 1_{\{2,5,6\}}-\vec 1_{\{1,3,4\}})=\mathtt{\frac{1}{2}}$, 
 \item $F_1(\vec 1_{\{2\}})=F_1(\vec 1_{\{5\}})=F_1(\vec 1_{\{6\}})=F_1(\vec 1_{\{2,6\}})=1$, $F_1(\vec 1_{\{2,5\}})=\frac57$, $F_1(\vec 1_{\{5,6\}})=\frac35$. 
\end{itemize}

Thus, all the vertices in $ \{\vec x\in \mathrm{link}(\vec 1_{\{2,5,6\}}):F_1(\vec x)< F_1(\vec 1_{\{2,5,6\}})\}$ are:
\begin{itemize}
\item[(S1)] $\vec1_{\{2,5,6\}}-\vec1_{\{1,3,4\}}$
\item[(S2)] $\vec 1_{\{2,5,6,3\}}$, $\vec 1_{\{2,5,6,4\}}$, $\vec 1_{\{2,5,6,1\}}$, $\vec 1_{\{2,5,6,3,4\}}$, $\vec 1_{\{2,5,6,3,1\}}$, $\vec 1_{\{2,5,6,4,1\}}$, $\vec 1_{\{2,5,6,4,1,3\}}$, 
$\vec 1_{\{2,5,6,3,4\}}-\vec 1_{\{1\}}$, $\vec 1_{\{2,5,6,3,1\}}-\vec 1_{\{4\}}$, $\vec 1_{\{2,5,6,4,1\}}-\vec 1_{\{3\}}$, $\vec 1_{\{2,5,6,1\}}-\vec 1_{\{3,4\}}$,  $\vec 1_{\{2,5,6,1\}}-\vec 1_{\{3\}}$, $\vec 1_{\{2,5,6,1\}}-\vec 1_{\{4\}}$
\end{itemize}

It is easy to see that these vertices induce a disconnected simplicial complex in $K_n$. In  fact,  
 it is clear that the subcomplex has two connnected components, one is the singleton in (S1), and the other is the subcomplex induced by the vertices listed in (S2). 
 
Therefore,  $\frac 59$ is a  homological   eigenvalue of $\Delta_1$. Note that $\lambda_2(\Delta_1)=\frac25<\frac 59<\lambda_3(\Delta_1)=\frac57$, and  for any $0<\epsilon<\frac{1}{10}$ and  $0<\epsilon'<\frac{1}{20}$,  $(\frac59-\epsilon,\frac59+\epsilon)$ doesn't intersect  with $(\frac25-\epsilon',\frac25+\epsilon')\cup  (\frac57-\epsilon',\frac57+\epsilon')$. 
Then, by Proposition \ref{pro:min-max-eigen} and Theorem \ref{thm:homotopy-eigen},  there exists $0<\delta<1$ such that for any $p\in [1,1+\delta)$, 
$\lambda_2(\Delta_p)\in(\frac25-\epsilon',\frac25+\epsilon')$, $\lambda_3(\Delta_p)\in(\frac57-\epsilon',\frac57+\epsilon')$, and 
there is another   $\Delta_p$-eigenvalue $\lambda(\Delta_p)\in (\frac59-\epsilon,\frac59+\epsilon)$. In consequence,  $\lambda_2(\Delta_p)<\lambda(\Delta_p)<\lambda_3(\Delta_p)$, which implies that such an  eigenvalue $\lambda(\Delta_p)$ is not a (variational) min-max  eigenvalue of  
$\Delta_p$ on the graph $G$, $\forall p\in (1,1+\delta) $.  The proof is completed.
 \end{proof}

\begin{remark}
The graph  presented  in Theorem \ref{thm:main} (or  Section \ref{sec:verification}) was first studied  in a  previous work of the author \cite{CSZ17}. 
But in that paper, we only get partial results  on the 1-Laplacian eigenvalues (for example, we didn't even know the value of $\lambda_3(\Delta_1)$ for the graph in that paper). 
\end{remark}

\begin{remark}
Note that 
$\lambda_3(\Delta_1)$ equals the third Cheeger constant $h_3$. However, from the multi-way Cheeger inequality, we can only get $\lambda_3(\Delta_1)\le h_3$. In fact, it can be verified that every eigenvector of  $\lambda_3(\Delta_1)$ has at most two nodal domains, and thus  both the nodal domain theorem and the multi-way Cheeger inequality in \cite{CSZ17} are not sharp on 
this example. They are not powerful enough to obtain the results that we desired. 
To this end, we establish 
Lemma  \ref{lemma:minmax-eigen-lower} which is  special but strong.  
\end{remark}
The picture in Fig.~\ref{fig} shows a sketch of the proof for 
Theorem \ref{thm:main}. In fact, we can determine all the eigenvalues and all the min-max eigenvalues of $\Delta_1$ by some auxiliary results established in Section \ref{sec:main-proof}, and then we apply 
Theorem \ref{thm:tri-link} to select the eigenvalue $\frac59$ which is homological but not in the min-max form. Note that the 
 min-max eigenvalues of $\Delta_p$  should be far away from $\frac59$  when $p$ is sufficiently close to $1$, due to Proposition \ref{pro:min-max-eigen}. 
Then, Theorem \ref{thm:homotopy-eigen} is applicable to prove the existence of non-minmax homological eigenvalues of $\Delta_p$ near $\frac59$, when $p$ is sufficiently close to $1$.

To some extent, the picture suggests 
us to consider the  `persistent $p$-Laplacian' for varying $p$, which is able to record the birth (appearance) and death (disappearance) of the spectra  
of $\Delta_p$ when  $p$ goes from $1$ to $\infty$, and thus extra combinatorial  information is embedded in it.

\begin{figure}[!htp]
    \centering
\begin{tikzpicture}[scale=8]
\draw[thin] (0,1.544)--(0,0)--(1,0)--(1,1);
\node (26) at  (0,1.544) {$\bullet$};
\node (25) at (0,1.408) {$\bullet$};
\node (24) at (0,1.333) {$\bullet$};
\node (23) at (0,1.1225) {$\bullet$};
\node (22) at (0,0.59175) {$\bullet$};
\node (21) at (0,0) {$\bullet$};
\node (19) at (1,1) { $\bullet$};
\node (18) at (1,7/9) {\color{blue} \tiny $\bullet$};
\node (17) at (1,3/4) {\color{blue} \tiny $\bullet$};
\node (16) at (1,5/7) {$\bullet$};
\node (15) at (1,2/3) {\color{blue} \tiny $\bullet$};
\node (14) at (1,3/5) {\color{blue} \tiny $\bullet$};
\node (13) at (1,5/9) {\color{red} $\bullet$};
\node (12) at (1,2/5) { $\bullet$};
\node (11) at (1,0) { $\bullet$};
\node (0) at (0.9,0.56) {\color{blue} \tiny $\bullet$};
\draw[thick] (0,1.544) to[out=-60,in=180] (1,1);
 \draw[thick] (0,1.408) to[out=-60,in=190] (1,1); \draw[thick] (0,1.333) to[out=-60,in=240] (1,1);
 \draw[thick] (0,1.1225) to[out=-60,in=180] (1,5/7);
  \draw[red] (0.9,0.56) to[out=-60,in=120] (1,5/9);
 \draw[thick] (0,0.59175) to[out=-60,in=180] (1,2/5);
 \draw[thick] (0,0) to[out=0,in=180] (1,0);
  \node (2-6) at (-0.1,1.544) {$\frac{20+\sqrt{10}}{15}$}; 
  \node (2-5) at (-0.09,1.408) {$\frac{6+\sqrt{6}}{6}$};
   \node (2-4) at (-0.05,1.333) {$\frac43$};
    \node (2-3) at (-0.1,1.1225) {$\frac{20-\sqrt{10}}{15}$}; 
  \node (2-2) at (-0.09,0.59175) {$\frac{6-\sqrt{6}}{6}$};
 \node (1-9) at (1.01,1.03) {$1$}; 
  \node (1-8) at (1.02,7/9+0.01) {\small $\frac79$}; 
\node (1-7) at (0.98,3/4) {\small $\frac34$};
\node (1-6) at (1.02,5/7) {$\frac57$};
\node (1-5) at (0.97,2/3) {\small$\frac23$}; 
\node (1-4) at (0.98,3/5+0.01) {\small$\frac35$};
\node (1-3) at (1.02,5/9) {$\frac59$};
\node (1-2) at (1.02,2/5) {$\frac25$};
\node (1-1) at (1.03,0) {$0$};
\node (2-1) at (-0.03,0) {$0$};
\node (2p) at (-0.1,0.06) {$p=2$};
\node (1p) at (1.1,0.06) {$p=1$};
\node (21p) at (0.5,0.06) {$2>p>1$};
\draw[thick] (0.48,0.01) -- (0.5,0)--(0.48,-0.01);
\end{tikzpicture}
\caption{\label{fig}\small This picture illustrates the variance of the eigenvalues of $\Delta_p$ ($2\ge p\ge 1$) for the graph presented in Theorem \ref{thm:main}. The left six numbers are exactly the eigenvalues of $\Delta_2$, while the right nine numbers are exactly the eigenvalues of $\Delta_1$. All the black points are the variational (min-max) eigenvalues, while the red one is the homological eigenvalue which is non-variational. 
The blue points are also 
non-variational eigenvalues of $\Delta_1$, 
but we haven't checked 
whether they are homological eigenvalues. }
\end{figure}

\begin{remark}\label{rem:general-graph}
If we allow that the graph possesses  repeated edges  with different  real incidence coefficients, then we can find  
non-variational eigenvalues for $p$-Laplacians on certain graphs of order 2.  Precisely, let $V=\{1,2\}$ and  $E=\{e_1,e_2\}$,  consider a generalized graph $(V,E,\varphi)$ with  repeated edges and  vertex-edge incidence coefficients  $\varphi:V\times E\to\R$   defined by  $\varphi(1,e_1)=\varphi(2,e_1)=\varphi(1,e_2)=1$ and $\varphi(2,e_2)=-2$. Then, similar to the usual $p$-Laplacian eigenvalue problem on simple graphs, the   generalized    $p$-Laplacian eigenvalue problem is to find $(\lambda,\vec x)$ such that
 $$\begin{cases}
 |x_1+x_2|^{p-2}(x_1+x_2)+|x_1-2x_2|^{p-2}(x_1-2x_2)=2\lambda |x_1|^{p-2}x_1\\
 |x_1+x_2|^{p-2}(x_1+x_2)-2|x_1-2x_2|^{p-2}(x_1-2x_2)=3\lambda |x_2|^{p-2}x_2
 \end{cases}$$
 which includes the critical values and  critical points of 
 the corresponding Rayleigh  quotient $$F_p(\vec x)=\frac{|x_1+x_2|^p+|x_1-2x_2|^p}{2|x_1|^p+3|x_2|^p}.$$
Since $$F_p(1,0)=F_p(0,1)=F_p(-1,0)=F_p(0,-1)=1>F_p(1,-1)=\frac{3^p}{5}>F_p(2,1)=\frac{3^p}{2^{p+1}+3}$$
whenever $1\le p<\log_35$, $F_p$ has a local minimizer in the first quadrant  $\{\vec x\in\R^2:x_1>0,x_2>0\}$, and a local minimizer in  $\{\vec x\in\R^2:x_1>0,x_2<0\}$. Since $(1,-1)$ and $(2,1)$ are local minimizers of $F_1$, and  $F_1(1,-1)=\frac{3}{5}>F_1(2,1)=\frac{3}{7}$, $F_1$ has exactly three different critical values, and the minimum of $F_p$ restricted on $\{\vec x\in\R^2:x_1>0,x_2>0\}$ is smaller than the minimum of $F_p$ restricted on $\{\vec x\in\R^2:x_1>0,x_2<0\}$ if $p$ is sufficiently close to $1$. Therefore, it is easy to see that $F_p$ has at least three different critical values, and thus the $p$-Laplacian   has at least  three distinct  eigenvalues  when $1\le p<\log_35$.
\end{remark}

\subsection{Non-homological eigenvalues}

\begin{example}
Using the same technique in 
Section 
\ref{sec:verification}, we can show that $\vec1_{\{1,2\}}$ is an eigenvector of the  1-Laplacian on the following graph:
\begin{center}
\begin{tikzpicture}[scale=1]
\draw (0,0)--(-1,1)--(-1,-1)--(0,0)--(1,0)--(2,0);
\node (1) at  (-1.2,1) {$1$};
\node (2) at  (-1.2,-1) {$2$};
\node (3) at  (0,0.3) {$3$};
\node (4) at  (1,0.3) {$4$};
\node (5) at  (2,0.3) {$5$};
\node (1) at  (-1,1) {$\bullet$};
\node (2) at  (-1,-1) {$\bullet$};
\node (3) at  (0,0) {$\bullet$};
\node (4) at  (1,0) {$\bullet$};
\node (5) at  (2,0) {$\bullet$};
\end{tikzpicture}
\end{center}
It can be verified that $\frac12$ is an eigenvalue of the 
 1-Laplacian on the  graph above,  while by Theorem \ref{thm:tri-link}, it is not a homological eigenvalue.
\end{example}

A non-homological  eigenvalue does not possess the stability and local monotonicity, which means that Theorem  \ref{thm:most} cannot be improved. We give an example on path graphs.
\begin{example}\label{example:path6}
By Theorem 3.7 in \cite{DFT21}, for $p>1$,  the $p$-Laplacian on a tree admits only variational eigenvalues,  and by  the discussion in \cite{DFT21} and  Corollary \ref{cor:=}, 
the min-max eigenvalues of the  1-Laplacian on a tree  coincide exactly with the multi-way Cheeger constants. In particular, on the path  graph $P_6$ with six vertices,  we have $h_1=\lambda_1(\Delta_1)=0$, $h_2=\lambda_2(\Delta_1)=1/5$, $h_3=\lambda_3(\Delta_1)=1/2$, and $h_4=h_5=h_6=\lambda_4(\Delta_1)=\lambda_5(\Delta_1)=\lambda_6(\Delta_1)=1$, and by Theorem  \ref{thm:most}, for any $0<\epsilon<\frac{1}{100}$, there exists $\delta>0$ such that for any $1<p<1+\delta$, $\mathrm{spec}(\Delta_p)\subset [0,\epsilon)\cup (\frac15-\epsilon,\frac15+\epsilon)\cup (\frac12-\epsilon,\frac12+\epsilon)\cup (1-\epsilon,1]$. 

By Theorem 2 in \cite{CSZ17}, the set of 1-Laplacian eigenvalues of  the path  graph $P_6$  is 
$\{0,1/5,1/3,1/2,1\}$.  

Therefore, the 1-Laplacian eigenvalue $1/3$ is non-variational. Furthermore, we can show that the eigenvalue $1/3$ must also be  non-homological.  If fact, if $1/3$ is a homological eigenvalue of $\Delta_1$ on $P_6$, then by the stability of homological eigenvalues (Theorem  \ref{thm:most}), for any $p>1$ which is sufficiently close to $1$, $\Delta_p$ on $P_6$ has an eigenvalue that is sufficiently close to $1/3$, which is 
a contradiction to the discussions above. In consequence, $\frac13$ is a non-homological eigenvalue of $\Delta_1$ on $P_6$. Moreover,  from this example,  we see that in contrast to homological eigenvalues of $\Delta_p$, the stability and local monotonicity 
with respect to $p$, do not hold on  non-homological eigenvalues of $\Delta_p$.
\end{example}

\subsection{A note on complete graphs}
Based on the results in \cite{Amghibech} and \cite{CSZ17}, we have:
 \label{sec:complete-graph}
\begin{pro}
For $p\not\in\{1,2\}$, and $n\ge4$, the complete graph of order $n$ has exactly $\lfloor n/2\rfloor (n-\lfloor n/2\rfloor )+1-n$ non-variational eigenvalues of $p$-Laplacian.

While for any $n\ge1$, all the eigenvalues of 1-Laplacian  on the complete graph of order $n$ are variational eigenvalues.
\end{pro}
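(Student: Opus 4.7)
The plan is to invoke Amghibech's enumeration of the full $\Delta_p$-spectrum on $K_n$ \cite{Amghibech} and pair it with the variational framework of the present paper. For $p\in(1,\infty)\setminus\{2\}$, I first verify that every $\Delta_p$-eigenvector on $K_n$ has the three-level form $\vec x=\alpha\vec1_A+\beta\vec1_B$ with $A,B$ disjoint, $|A|=k$, $|B|=l$, and $k+l\le n$. The eigenvalue equation at any vertex outside $A\cup B$ forces $l|\beta|^{p-2}\beta=-k|\alpha|^{p-2}\alpha$; solving and substituting into the equation at a vertex of $A$ yields the closed form
\[
\lambda(k,l)\;=\;\frac{(k^{1/(p-1)}+l^{1/(p-1)})^{p-1}+(n-k-l)}{n-1}.
\]
Since $\lambda(k,l)=\lambda(l,k)$ and strict monotonicity of $\lambda(\cdot,\cdot)$ for $p\ne 2$ follows by direct differentiation, the distinct positive eigenvalues are parametrised by unordered pairs $\{k,l\}$ with $k+l\le n$. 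The summation $\sum_{k=1}^{\lfloor n/2\rfloor}(n-2k+1)=\lfloor n/2\rfloor(n-\lfloor n/2\rfloor)$, augmented by the zero eigenvalue, gives the total distinct-eigenvalue count of $\lfloor n/2\rfloor(n-\lfloor n/2\rfloor)+1$.

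Next I locate the variational eigenvalues inside this list. Since the sequence $0=\lambda_1(\Delta_p)<\cdots<\lambda_n(\Delta_p)$ has length $n$, the distinct variational count is at most $n$; to see it equals $n$, I construct, for each $k\in\{1,\ldots,n-1\}$, an explicit centrally symmetric subset $S_k\subset\R^n\setminus\{\vec0\}$ of Yang index $k$ on which $\sup F_p=\lambda(k,n-k)$, using as building blocks the two-valued vectors $\alpha\vec1_A-\beta\vec1_{V\setminus A}$ with $|A|=k$ and the antipodal orbit of the $S_n$-action, and match it with a corresponding lower bound drawn from Theorem~\ref{thm:p-lap-C}. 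This identifies each $\lambda_k(\Delta_p)$ with one of the diagonal values $\lambda(j,n-j)$, giving $n$ pairwise distinct variational values for $p\ne 2$ by the strict part of Theorem~\ref{thm:most} combined with the monotonicity above. Subtracting yields exactly $\lfloor n/2\rfloor(n-\lfloor n/2\rfloor)+1-n$ non-variational eigenvalues, proving the first claim.

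For the $1$-Laplacian statement I use that every $\Delta_1$-eigenvalue on any graph is realised as $F_1(\vec1_A-\vec1_B)$ for some $(A,B)\in\mathcal{P}_2(V)$ \cite{CSZ17}, and by vertex-transitivity on $K_n$ this value reduces to $F_1(\vec1_A-\vec1_B)=\frac{2|A||B|+(|A|+|B|)(n-|A|-|B|)}{(n-1)(|A|+|B|)}$, depending only on $(|A|,|B|)$. By Claim~\ref{claim:comb-topo-min-max-1lap}, $\lambda_k(\Delta_1)=\hat h_k$ is the min-max of $F_1$ over vertex sets of subcomplexes of $K_n$ of Yang index $\ge k$. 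For each admissible $(a,b)$ I would exhibit the $S_n$-invariant centrally symmetric subcomplex induced by the $S_n$-orbit of a fixed $(A,B)$ with $|A|=a,\,|B|=b$, together with its antipode, whose Yang index realises the corresponding min-max value, placing the eigenvalue in the variational list; the sandwich $\lambda_k^-\le\lambda_k\le\lambda_k^+$ then extends the conclusion to all three variational sequences.

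The main obstacle lies in the equivariant topology underlying the last two steps: computing or sharply bounding the Yang index of the natural $S_n$-invariant subcomplexes of $K_n$ is delicate because the $S_n$-action is not free. I expect to handle it either via Smith-type localisation or by an explicit construction of odd maps between the subcomplex and a sphere of the correct dimension, in the spirit of the join-type arguments used in Section~\ref{sec:min-max-1-Lap}.
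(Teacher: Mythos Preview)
Your approach misses the key simplifying idea and, as written, contains a genuine error in the $p\notin\{1,2\}$ case.

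The paper's proof is short because it uses the \emph{$\gamma^-$-multiplicity} of eigenspaces together with the standard Lusternik--Schnirelman multiplicity fact: if $\lambda_k^-=\cdots=\lambda_{k+m}^-=\lambda$ then $\gamma^-(S_\lambda)\ge m+1$. For $p\notin\{1,2\}$, Amghibech's Theorem~6 (and its proof) gives not only the list of eigenvalues but also that every nonzero eigenspace has $\gamma^-$-multiplicity exactly $1$. Hence the $n$ variational values $\lambda_1^-,\ldots,\lambda_n^-$ are automatically pairwise distinct, and subtracting $n$ from the total count $\lfloor n/2\rfloor(n-\lfloor n/2\rfloor)+1$ finishes the argument. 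No identification of \emph{which} eigenvalues are variational is needed.

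Your route, by contrast, tries to pin each $\lambda_k(\Delta_p)$ to a specific ``diagonal'' value $\lambda(j,n-j)$. This cannot work as stated: since $\lambda(j,n-j)=\lambda(n-j,j)$, there are only $\lfloor n/2\rfloor$ distinct diagonal values, not $n-1$, so your construction cannot produce $n$ pairwise distinct variational eigenvalues. The appeal to ``the strict part of Theorem~\ref{thm:most}'' does not help here, as that theorem concerns monotonicity in $p$, not separation of $\lambda_k$ from $\lambda_{k+1}$ at a fixed $p$.

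For $p=1$ the same multiplicity idea dissolves your ``main obstacle'' entirely. From \cite{CSZ17} the eigenvalues on $K_n$ are $0$ and $\frac{n-i}{n-1}$ for $1\le i\le\lfloor n/2\rfloor$, with $\gamma^-$-multiplicities $1,\,2,\ldots,2$ (and $1$ for the Cheeger value when $n$ is even), summing to exactly $n$. Since the variational list has length $n$ and each value can repeat at most its $\gamma^-$-multiplicity many times, every eigenvalue must appear in the list. No computation of the Yang index of $S_n$-invariant subcomplexes is required.
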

\begin{proof}
By Theorem 6 in \cite{Amghibech}, for $n\ge3$ and  $p>1$, the nonzero eigenvalues of $\Delta_p$ on the complete graph of order $n$ are: $$\frac{1}{n-1}\left(n-i-j+(i^{\frac{1}{p-1}}+j^{\frac{1}{p-1}})^{p-1}\right),\;i,j\in\mathbb{Z}_+,\,i+j\le n.$$
It can be checked that the number of the 
eigenvalues is  
$\lfloor n/2\rfloor (n-\lfloor n/2\rfloor )+1$, and every eigenvalue has $\gamma^-$-multiplicity 1 (due to the proof of Theorem 6 in \cite{Amghibech}), 
whenever $p\ne 1,2$. 

By Proposition 8 and Theorem 5 in \cite{CSZ17}, for $n\ge3$, the nonzero eigenvalues of $\Delta_1$ on the complete graph of order $n$ are
$$\frac{n-i}{n-1},\;\;i\in\mathbb{Z}_+,\,i\le \lfloor \frac n2\rfloor .$$
Moreover, the $\gamma^-$-multiplicity of the eigenvalue $\frac{n-i}{n-1}$ is $2$, whenever $1\le i<\lfloor \frac n2\rfloor $. The smallest positive eigenvalue (i.e., the Cheeger constant) $\frac{n-\lfloor \frac n2\rfloor }{n-1}$ has $\gamma^-$-multiplicity 
$$\begin{cases}1,&\text{ if }n\text{ is even},\\
2,&\text{ if }n\text{ is odd}.\end{cases}$$
Thus, there is no difficulty to verify that all the eigenvalues are variational eigenvalues, that is, they can be listed as  $\lambda_1^-(\Delta_1),\cdots,\lambda_n^-(\Delta_1)$.
\end{proof}

\subsection{
Further remarks on nonvariational  eigenvalues of the 1-Laplacian 
}
\label{section:1Lapl}
We have shown in 
Section \ref{sec:verification} that for some graphs, 
$$\left\{\text{the min-max }\Delta_p\text{-eigenvalues }\lambda_1(\Delta_p),\cdots,\lambda_n(\Delta_p)\right\}\subsetneqq \left\{\text{homological eigenvalues of }\Delta_p\right\}.
$$
In this section, we show that for $p=1$, the size of  the  difference set 
$$\{\Delta_1\text{-eigenvalues}\}\setminus\{\text{min-max }\Delta_1\text{-eigenvalues}\}$$
can be very large for some graphs. For convenience, given a simple graph $G$, we shall use $\Delta_1(G)$ to denote the 1-Laplacian on the graph $G$. 
\begin{itemize}
    \item \textbf{For sufficiently large integer $n$, there exists a  connected graph  $G_n$ on $n$ vertices with $O(n\ln n)$ eigenvalues ({counting multiplicity}) of $\Delta_1(G_n)$.}

\begin{example}
Let $G_n$ be the  $n$-order cycle  graph, i.e., $V(G_n)=\{1,\cdots,n\}$ and  $E(G_n)=\{\{1,2\},\{2,3\},\cdots,\{n-1,n\},\{n,1\}\}$.  According to Theorem 4 in  \cite{CSZ17}, 
we know that the  eigenvalues of $\Delta_1(G_n)$ are $1,\frac12,\cdots,\frac{1}{\lfloor\frac n2\rfloor},0$, and 
 it is not difficult to verify  that 
the  corresponding  multiplicities are  $\lfloor\frac n2\rfloor$, $\lfloor\frac n4]$, $\cdots$, $\lfloor\frac{n}{2\lfloor n/2\rfloor}\rfloor$, $1$, respectively. Here and after, we use $\lfloor \cdot\rfloor$ to denote the floor function. 
  
  Therefore,  the number of eigenvalues (counting multiplicity) of  $\Delta_1(G_n)$ is  $1+\sum_{i=1}^{\lfloor \frac{n}{2}\rfloor}\lfloor \frac{n}{2i}\rfloor=\frac n2(1+\frac12+\cdots+\frac{1}{\lfloor \frac{n}{2}\rfloor})+O(n)=\frac n2\ln(\frac n2)+O(n)$,  where we used the  logarithmic growth property of  harmonic series.
\end{example}

\item \textbf{For sufficiently large integer $n$, there exists a  connected  graph $G_n$ on $n$ vertices with at least  $\lfloor\frac {3n}{2}\rfloor-2$ {pairwise distinct} eigenvalues of $\Delta_1(G_n)$.}

\begin{example}\label{ex:more-1-Laplacian-eigenvalues}
 For any even number  $n\ge 8$, let $G_n$ be the graph on the vertex set $\{1,\cdots,n\}$, with the edge set 
     $$E(G_n)=\left\{\{i,j\}:i+j\le n+1,\,i,j\in \{1,\cdots,n\}\right\}\cup\left\{\{n,n-i\}:1\le i\le \frac n2-2\right\}.$$
  Then,  we have $\deg(i)=n-i$  for $1\le i\le \frac{n}{2}$, $\deg(\frac n2+1)=\frac n2$, $\deg(n)=\frac n2-1$ and $\deg(j)=n+2-j$, for $  \frac{n}{2}+2\le j\le n-1$.  See Fig.~\ref{fig:more-1-Laplacian-eigenvalues} for the cases  of  $n=8$ and $n=10$, respectively.
    
\begin{figure}[ht]
\centering
\begin{tikzpicture}
\node (1) at (0:3) {$\bullet$}; 
\node (2) at (45:3) {$\bullet$}; 
\node (3) at (90:3) {$\bullet$};
\node (4) at (135:3) {$\bullet$};
\node (5) at (180:3) {$\bullet$};
\node (6) at (225:3) {$\bullet$};
\node (7) at (270:3) {$\bullet$};
\node (8) at (315:3) {$\bullet$};
\draw (0:3)--(45:3);
\draw (0:3)--(90:3);
\draw (0:3)--(135:3);
\draw (0:3)--(180:3);
\draw (0:3)--(225:3);
\draw (0:3)--(270:3);
\draw (0:3)--(315:3);
\draw (45:3)--(90:3);
\draw (45:3)--(135:3);
\draw (45:3)--(180:3);
\draw (45:3)--(225:3);
\draw (45:3)--(270:3);
\draw (90:3)--(135:3);
\draw (90:3)--(180:3);
\draw (90:3)--(225:3);
\draw (135:3)--(180:3);
\draw (315:3)--(225:3);
\draw (315:3)--(270:3);
\node (1) at (0:3.3) {$1$}; 
\node (2) at (45:3.3) {$2$}; 
\node (3) at (90:3.3) {$3$};
\node (4) at (135:3.3) {$4$};
\node (5) at (180:3.3) {$5$};
\node (6) at (225:3.3) {$6$};
\node (7) at (270:3.3) {$7$};
\node (8) at (315:3.3) {$8$};
\end{tikzpicture}
~~~~~~
\begin{tikzpicture}
\node (1) at (0:3) {$\bullet$}; 
\node (2) at (36:3) {$\bullet$};
\node (3) at (72:3) {$\bullet$};
\node (4) at (108:3) {$\bullet$};
\node (5) at (144:3) {$\bullet$};
\node (6) at (180:3) {$\bullet$};
\node (7) at (216:3) {$\bullet$};
\node (8) at (252:3) {$\bullet$};
\node (9) at (288:3) {$\bullet$};
\node (10) at (324:3) {$\bullet$};
\draw (0:3)--(36:3);
\draw (0:3)--(72:3);
\draw (0:3)--(108:3);
\draw (0:3)--(144:3);
\draw (0:3)--(180:3);
\draw (0:3)--(216:3);
\draw (0:3)--(252:3);
\draw (0:3)--(288:3);
\draw (0:3)--(324:3);
\draw (36:3)--(72:3);
\draw (36:3)--(108:3);
\draw (36:3)--(144:3);
\draw (36:3)--(180:3);
\draw (36:3)--(216:3);
\draw (36:3)--(252:3);
\draw (36:3)--(288:3);
\draw (72:3)--(108:3);
\draw (72:3)--(144:3);
\draw (72:3)--(180:3);
\draw (72:3)--(216:3);
\draw (72:3)--(252:3);
\draw (108:3)--(144:3);
\draw (108:3)--(180:3);
\draw (108:3)--(216:3);
\draw (144:3)--(180:3);
\draw (324:3)--(216:3);
\draw (324:3)--(252:3);
\draw (324:3)--(288:3);
\node (1) at (0:3.3) {$1$};
\node (2) at (36:3.3) {$2$};
\node (3) at (72:3.3) {$3$};
\node (4) at (108:3.3) {$4$};
\node (5) at (144:3.3) {$5$};
\node (6) at (180:3.3) {$6$};
\node (7) at (216:3.3) {$7$};
\node (8) at (252:3.3) {$8$};
\node (9) at (288:3.3) {$9$};
\node (10) at (324:3.3) {$10$};
\end{tikzpicture}
\caption{The graphs in Example \ref{ex:more-1-Laplacian-eigenvalues}}
\label{fig:more-1-Laplacian-eigenvalues}
\end{figure}
      
      It is clear that  $\deg(i)\ge 3$, $\forall i\in V$. Thus, applying Proposition \ref{prop:graph-numner-eigen} to $G_n$,  the number of distinct eigenvalues of $\Delta_1(G_n)$ is at least 
      \begin{align*}
          &2+\#\{\deg(i)+\deg(j):\{i,j\}\in E(\Gamma_n)\}
          \\=~&2+\#\left(\{n,\cdots,2n-3\}\cup \{\frac n2+2,\cdots, n-1\}\right)=\frac32n-2.  
      \end{align*}
\end{example}


\end{itemize}

Finally, we present our technical results used in the examples above. 

\begin{pro}\label{prop:graph-numner-eigen}
For a simple graph $G=(V,E)$, the number of distinct eigenvalues of $\Delta_1(G)$ is larger than or equal to 
 $$2+\#\{\deg(i)+\deg(j):\{i,j\}\text{ is an edge s.t. the induced subgraph on  }V\setminus\{i,j\} \text{ has no isolated vertex}\}.$$
In particular, if the minimum degree of 
$G$ is larger than or equal to 
$3$, then the number of distinct eigenvalues of $\Delta_1(G)$ is larger than or equal to 
  $$2+\#\{\deg(i)+\deg(j):\{i,j\}\in E(G)\}.$$
\end{pro}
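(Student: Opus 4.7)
My plan is to exhibit an explicit eigenvector of $\Delta_1(G)$ for each relevant edge, using the characteristic vector $\vec 1_{\{i,j\}}$, and show that the resulting eigenvalues, together with the universal eigenvalues $0$ and $1$, account for the required count.

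\textbf{Step 1 (the ``$+2$'').} The eigenvalue $0$ is always present, with eigenvector $\vec 1_V$ (choose $z_{ij}\in[-1,1]$ by orienting the edges so that $\sum_{j\sim i}z_{ij}=0$ at each vertex, e.g.\ via any Eulerian-type balancing). The eigenvalue $1$ is always present since $\lambda_n^-(\Delta_1)=1$, see Corollary~\ref{cor:=} (or Proposition in Section~\ref{sec:min-max-1-Lap}).

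\textbf{Step 2 (one eigenvalue per edge class).} Fix an edge $\{i,j\}$ such that the subgraph induced on $V\setminus\{i,j\}$ has no isolated vertex, and set $\vec x=\vec 1_{\{i,j\}}$. I compute both sides of \eqref{eq:brief-1-Lap-1}. At $v=i$: the edge $\{i,j\}$ contributes $z_{ij}\in\operatorname{Sgn}(0)=[-1,1]$, every other edge from $i$ goes to a vertex with value $0$, forcing $z_{iu}=1$. So $\sum_{u\sim i}z_{iu}=z_{ij}+\deg(i)-1$, and the same at $j$ with $z_{ji}=-z_{ij}$. Requiring $z_{ij}+\deg(i)-1=\lambda\deg(i)$ and $-z_{ij}+\deg(j)-1=\lambda\deg(j)$ and adding yields
\[
\lambda\;=\;\frac{\deg(i)+\deg(j)-2}{\deg(i)+\deg(j)}\;=\;1-\frac{2}{\deg(i)+\deg(j)},\qquad z_{ij}=\frac{\deg(j)-\deg(i)}{\deg(i)+\deg(j)}\in[-1,1].
\]

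\textbf{Step 3 (feasibility off the edge).} For $v\in V\setminus\{i,j\}$, let $d_v^{ij}\in\{0,1,2\}$ be the number of neighbours in $\{i,j\}$ and $d_v^A:=\deg(v)-d_v^{ij}$ the number of neighbours in $V\setminus\{i,j\}$. For each edge $\{v,u\}$ with $u\in\{i,j\}$ we have $z_{vu}=-1$ forced, while the remaining $z_{vu}$ range in $[-1,1]$; so $\sum_{u\sim v}z_{vu}$ fills the interval $[-d_v^{ij}-d_v^A,\;-d_v^{ij}+d_v^A]$, and the eigen-inclusion at $v$ asks this interval to meet $[-\lambda\deg(v),\lambda\deg(v)]$, i.e.\ $\lambda\deg(v)\ge d_v^{ij}-d_v^A$. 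The no-isolated-vertex hypothesis gives $d_v^A\ge 1$, so the inequality is automatic when $d_v^{ij}\le 1$. The delicate case is $d_v^{ij}=2$; but then $i,j$ share the common neighbour $v$, forcing $\deg(i),\deg(j)\ge 2$, hence $\lambda\ge 1/2$, while the required bound reduces to $\lambda\deg(v)\ge 2-d_v^A$, which holds since $\deg(v)\ge 3$ (again by $d_v^A\ge 1$). This is the only place where the hypothesis is actually used, and will be the main obstacle to treat carefully.

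\textbf{Step 4 (distinctness and the ``in particular'' statement).} Since $s\mapsto 1-2/s$ is strictly increasing, distinct values of $s=\deg(i)+\deg(j)$ over valid edges produce distinct eigenvalues, all lying in $[0,1)$ and hence distinct from $1$; they are also distinct from $0$ once $s>2$ (which is automatic in the ``in particular'' regime). For the minimum-degree-$\ge 3$ case, every vertex $v\in V\setminus\{i,j\}$ satisfies $d_v^A=\deg(v)-d_v^{ij}\ge 3-2=1$, so every edge is valid and the hypothesis of the first statement is satisfied edge-wise, yielding the second inequality.
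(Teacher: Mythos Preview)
Your overall strategy matches the paper's: exhibit $\vec 1_{\{i,j\}}$ as an eigenvector with eigenvalue $1-\tfrac{2}{\deg(i)+\deg(j)}$ whenever $G|_{V\setminus\{i,j\}}$ has no isolated vertex, then add $0$ and $1$. Steps~1, 2 and~4 are fine (minor quibble: for $\lambda=0$ simply take all $z_{ij}=0$; no Eulerian balancing is needed).

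The gap is in Step~3. You verify, for each $v\in V\setminus\{i,j\}$ separately, that the interval of \emph{locally achievable} values $[-d_v^{ij}-d_v^A,\,-d_v^{ij}+d_v^A]$ meets $[-\lambda\deg(v),\lambda\deg(v)]$. But the edge variables on $G':=G|_{V\setminus\{i,j\}}$ are shared: choosing $z_{vu}$ at $v$ forces $z_{uv}=-z_{vu}$ at $u$. Your argument has the quantifier order ``for each $v$ there exist $z$'s\ldots'', whereas what is needed is ``there exist $z$'s such that for all $v$\ldots''. The local condition you check is necessary but not obviously sufficient; indeed, setting all $z$'s on $G'$ to zero requires the stronger inequality $d_v^{ij}\le\lambda\deg(v)$, which fails for instance when $d_v^{ij}=2$, $\deg(v)=3$, $\deg(i)+\deg(j)\in\{4,5\}$ (then $\lambda\deg(v)\le\tfrac95<2$), or when $d_v^{ij}=1$, $\deg(v)=2$, $\deg(i)+\deg(j)=3$.

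The paper closes this gap in its Proposition~\ref{pro:eigenfunction-two-elements}(I): first try $z\equiv 0$ on $G'$, then observe that the failure cases force very small degrees ($\deg(i)+\deg(j)\le 5$, $\deg(v)\le 3$), and dispatch each by writing down explicit nonzero $z$-values on the one or two relevant edges of $G'$. In one case (your $d_v^{ij}=2$, $\deg(v)=3$, $\deg(i)+\deg(j)=5$) the feasible $z$ on the unique edge out of $v$ in $G'$ is the single value $\tfrac15$, so the construction is tight and genuinely requires the explicit check. Either import that proposition as a black box, or reproduce this short case analysis; the purely local interval argument does not suffice.
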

\begin{proof}
Note that $0$ and $1$ are the minimal and the maximal eigenvalues of $\Delta_1$, respectively. 
By Proposition  \ref{pro:eigenfunction-two-elements} (I), for an edge $\{i,j\}$,  the indicator $ \vec 1_{\{i,j\}}$ is an eigenvector of $\Delta_1$ if and only if the induced subgraph  $G|_{V\setminus\{i,j\}}$ has no isolated vertex;  and in this case, the eigenvector $ \vec 1_{\{i,j\}}$ corresponds to the eigenvalue \[\lambda=1-\frac{2}{\deg(i)+\deg(j)}.\]
This proves the first statement. 

For the second one, it follows from  $\deg(i)\ge 3$, $\forall i\in V$, that $G|_{V\setminus\{i,j\}}$  has no isolated vertex, for any edge $\{i,j\}$ in $G$. Hence, by the first statement, we complete the proof.
\end{proof}

\begin{defn}
For a subset $S\subset V$, the \textbf{$1$-neighborhood} of $S$ is  $\bigcup\limits_{\{i,j\}\in E:\{i,j\}\cap S\ne\emptyset}\{i,j\}$.
\end{defn}

\begin{defn}
A subset $S\subset V$ of order $2$ is  a \textbf{simple nodal set} if $S$ is an edge, and 
the subgraph  induced by $V\setminus S$ has no isolated vertex.  
\end{defn}

\begin{pro}\label{pro:eigenfunction-two-elements} 
\begin{enumerate}[(I)]
    
\item Assume that  $G$ is connected and $S$ is a  connected subset of order $2$. Then, $\vec1_S$ is an eigenvector of $\Delta_1$ if and only if $S$ is a simple  nodal set. 

\item If $S_1,\ldots,S_k$ are  simple  nodal sets  
with pairwise non-adjacent 1-neighborhoods, and $\frac{|\partial S_1|}{\vol(S_1)}=\ldots=\frac{|\partial S_k|}{\vol(S_k)}$, then every nonzero vector $\vec x\in\mathrm{span}(\vec 1_{S_1},\ldots,\vec 1_{S_k})$ is an eigenvector of $\Delta_1$.   

\item If $S$ and $S'$ are simple  nodal sets  
with disjoint 1-neighborhoods,  and $\frac{|\partial S|}{\vol(S)}=\frac{|\partial S'|}{\vol(S')}$, then $t\vec 1_S+t'\vec 1_{S'}$ is an eigenvector of $\Delta_1$, whenever $tt'\le 0$ and $(t,t')\ne(0,0)$.
\end{enumerate}  

\end{pro}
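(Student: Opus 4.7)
All three parts reduce to producing an antisymmetric family of edge labels $z_{ij}\in\mathrm{Sgn}(x_i-x_j)$ (with $z_{ij}=-z_{ji}$) satisfying the vertex-wise inclusion $\sum_{j:\{i,j\}\in E}z_{ij}\in\lambda\deg(i)\mathrm{Sgn}(x_i)$ for every $i\in V$. I will unpack this vertex-by-vertex. For Part (I) with $S=\{i,j\}$: at the two endpoints the sign of $x_\cdot$ is forced to $+1$, and the two resulting linear equations compatibly yield $\lambda=1-2/(\deg(i)+\deg(j))=|\partial S|/\vol(S)$ together with $z_{ij}=(\deg(j)-\deg(i))/(\deg(i)+\deg(j))$, which automatically lies in $[-1,1]$. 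At a vertex $k\in V\setminus S$ the edges into $S$ are forced to carry $z_{kl}=-1$, so the remaining requirement becomes the flow-type inclusion $y_k:=\sum_{l\in N_{G'}(k)}z_{kl}\in[d_S(k)-\lambda\deg(k),\,d_S(k)+\lambda\deg(k)]$ on the subgraph $G':=G|_{V\setminus S}$, where $d_S(k):=|N(k)\cap S|$ and $N_{G'}(k)$ denotes the neighbors of $k$ in $G'$.

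\textbf{Part (I).} The forward implication is immediate: an isolated vertex $k$ of $G'$ forces the empty sum $y_k=0$, requiring $d_S(k)\le\lambda\deg(k)$; but isolation in $G'$ means $d_S(k)=\deg(k)$, forcing $\lambda\ge 1$, which contradicts $\lambda<1$. For the converse, assuming $G'$ has no isolated vertex, the existence of an antisymmetric family $(z_{kl})\in[-1,1]^{E(G')}$ with prescribed divergence intervals is a standard feasibility problem for bounded circulations, which I plan to settle via Hoffman's theorem: the feasibility conditions reduce, for each $T\subseteq V\setminus S$, to $\sum_{k\in T}(d_S(k)-\lambda\deg(k))\le |E_{G'}(T,(V\setminus S)\setminus T)|$. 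After substituting $\lambda=|\partial S|/\vol(S)$ this rewrites as a Cheeger-type cut inequality whose right-hand side the no-isolated-vertex assumption lower-bounds, because each $k\in T$ contributes at least one $G'$-edge either inside $T$ or across the cut.

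\textbf{Parts (II) and (III).} For (II), pairwise non-adjacency of the 1-neighborhoods of the $S_l$ means that every edge of $G$ lies entirely inside one of them or entirely outside all of them, so the vertex-wise eigenvalue system decouples into independent subsystems, one per 1-neighborhood. On each subsystem, $\vec x=\sum_l t_l\vec 1_{S_l}$ restricts to $t_l\vec 1_{S_l}$, which reduces to Part (I) for $S_l$ after rescaling the witness by $\mathrm{sign}(t_l)$; the common value $\lambda=|\partial S_l|/\vol(S_l)$ glues the local solutions into a global one. For (III), the weaker disjoint-but-possibly-adjacent hypothesis may admit bridge edges $\{k,k'\}$ with $k$ in the 1-neighborhood of $S$ but outside $S$, and $k'$ in the 1-neighborhood of $S'$ but outside $S'$; along such a bridge the Part (I) witness at $S$ and that at $S'$ both try to fix $z_{kk'}$ and $z_{k'k}=-z_{kk'}$. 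When $tt'>0$ the forced signs $z_{kl}=-\mathrm{sign}(t)$ on edges into $S$ and $z_{k'l'}=-\mathrm{sign}(t')$ on edges into $S'$ agree, pushing $z_{kk'}$ and $z_{k'k}$ in the same direction and violating antisymmetry; when $tt'\le 0$ they oppose, and gluing the Part (I) witness for $S$ with a sign-flipped Part (I) witness for $S'$ yields a consistent global $(z_{ij})$.

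\textbf{Main obstacle.} The hardest step is Part (I)'s converse, where verifying Hoffman's cut condition for every $T\subseteq V\setminus S$ requires translating $\lambda=1-2/\vol(S)$ into a genuine use of the no-isolated-vertex hypothesis; this is the only place in the argument where the precise definition of simple nodal set is consumed. A secondary subtlety in (III) is confirming rigorously that the $tt'>0$ case can fail in general, for which a small explicit graph (e.g.\ $S=\{1,2\}$, $S'=\{3,4\}$, joined by a bridge $5{-}6$ with edges $1{-}5$, $5{-}6$, $6{-}3$) where direct computation shows infeasibility will suffice.
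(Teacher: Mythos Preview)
Your Part (I) is correct and takes a genuinely different route from the paper. The paper proves the converse by first observing that whenever $d_S(k)\le\lambda\deg(k)$ for every $k\in V\setminus S$ one may simply set all $G'$-edge labels to zero, and then disposing of the finitely many exceptional small-degree configurations (three cases) by explicit construction. Your Hoffman-circulation approach replaces this casework by a single cut inequality: after the substitution $\lambda=1-2/\vol(S)$ the condition $\sum_{k\in T}a_k\le|E_{G'}(T,T^c)|$ for $T\subseteq V\setminus S$ becomes
\[
e_S(T)\;\le\;(\vol(S)-1)\,|E_{G'}(T,T^c)|+(\vol(S)-2)\,|E_{G'}(T,T)|,
\]
where $e_S(T)$ counts edges from $T$ to $S$. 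Since $e_S(T)\le|\partial S|=\vol(S)-2$ always, and the no-isolated-vertex hypothesis forces $|E_{G'}(T,T)|+|E_{G'}(T,T^c)|\ge 1$ for nonempty $T$, the inequality follows. This is cleaner and more conceptual than the paper's enumeration, at the cost of invoking a flow-feasibility theorem.

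Your Parts (II) and (III) contain real gaps. In (II), your key sentence ``every edge of $G$ lies entirely inside one of them or entirely outside all of them'' is false under the paper's definition of pairwise non-adjacency: an edge may have one endpoint in $N_1(S_l)\setminus S_l$ and the other in $V\setminus\bigcup_m N_1(S_m)$, so the eigenequations at vertices outside all $1$-neighborhoods can still couple edges emanating from different blocks. The system therefore does not decouple into independent per-block subsystems; you need instead to run a single Hoffman argument on $G[V\setminus\bigcup_l S_l]$ and verify the cut condition globally, using that each vertex is adjacent to at most one $S_l$. In (III), your heuristic about bridge edges is off: the bridge $\{k,k'\}$ has $x_k=x_{k'}=0$, so $z_{kk'}$ is free, not forced, and the phrase ``pushing $z_{kk'}$ and $z_{k'k}$ in the same direction and violating antisymmetry'' does not describe an actual obstruction. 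What you need to show is that the two Part (I) witnesses (one for $S$, one sign-flipped for $S'$) can be chosen so that their restrictions to $G[V\setminus(S\cup S')]$ are compatible; this again comes down to a flow-feasibility check, and the hypothesis $tt'\le 0$ enters only to ensure the forced contributions at boundary vertices do not add up with the same sign. The paper's own treatment of (II)--(III) is admittedly a one-line ``can be verified'', so your sketches are not far behind in detail, but the specific decoupling claim you make is incorrect and should be replaced by a global feasibility argument.
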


\begin{proof}
It suffices to  check the coordinate equation  \eqref{eq:brief-1-Lap-1} 
for the 1-Laplacian eigenvalue problem.

\begin{enumerate}
    \item[(I)] Without loss of generality, we may assume that $S=\{1,2\}$. Note that $|\partial S|=\vol(S)-2$
    and  $\lambda:=\frac{|\partial S|}{\vol(S)}=1-\frac{2}{\deg(2)+\deg(1)}$. 
Then $z_{1i}=1$ for $i\not\in S$ with $\{1,i\}\in E$, and  $z_{2j}=1$ for  $j\not\in S$ with $\{2,j\}\in E$. 

Assume that $\vec1_S$ is an eigenvector. If $i\in V\setminus S$ is isolated in the subgraph induced by  $V\setminus S$, then $\deg(i)\in\{1,2\}$ and every vertex adjacent to $i$ lies in $\{1,2\}$.  There are only two cases: $i$ connected only to $1$, and $i$ connected to both $1$ and $2$. If $i$ is adjacent to $1$ only, then by the definition,   the eigenequation for the eigenpair $(\lambda,\vec1_{\{1,2\}})$ satisfied in $i$ is $z_{i1}=-1\in\lambda\mathrm{Sgn}(0)$ which implies that $|\lambda|\ge 1$, but since it is known that $0\le \lambda\le 1$, there must hold $\lambda=1$. 
Similarly, if $i$ is adjacent to $1$ and $2$ only, then by the definition,   the eigenequation satisfied in $i$ is $z_{i1}+z_{i2}=-1-1\in2\lambda\,\mathrm{Sgn}(0)$, also implying $\lambda=1$. 
Hence,  
 in any case, we obtain $\lambda=1$, which contradicts to $\lambda=1-\frac{2}{\deg(1)+\deg(2)}<1$. In consequence, the subgraph induced by $V\setminus S$ has no isolated vertex. 

For  $i\in V\setminus S$, denote by $$c_{S,i}=\begin{cases}2,&\text{ if }\{1,i\},\{2,i\}\in E,\\
0,&\text{ if }\{1,i\},\{2,i\}\not\in E,\\
1,&\text{ otherwise}.
\end{cases}$$

We first prove that, if 
 for any $i\in V\setminus S$, \begin{equation}\label{eq:condition-i-S}
\frac{c_{S,i}}{\deg(i)}\le \frac{|\partial S|}{\vol(S)}:=\lambda,    
\end{equation}

then $\vec1_S$ is an eigenvector of $\Delta_1$. 

By letting 
$$z_{12}=\lambda\deg(1)-\deg(1)+1=\frac{\deg(2)-\deg(1)}{\deg(2)+\deg(1)}\in \mathrm{Sgn}(0)=[-1,1],$$ we have 
$$\sum_{i\sim 1}z_{1i}=\deg(1)-1+z_{12} =\lambda\deg(1).$$ Similarly, $\sum_{j\sim 2}z_{2j}=\lambda\deg(2)$. 

 Taking $z_{ij}=0$ for any  $\{i,j\}\in E$ with $i,j\not\in S$, we have 
$$\left|\sum_{j\in V:j\sim i}z_{ij}\right|\le c_{S,i} \le  \lambda\deg(i)$$
$\forall i\in V\setminus S$, where  $\lambda=1-\frac{2}{\vol(S)}$.  Thus,  
$\vec1_S$ is an eigenvector.

 
 Now, 
 we only need to check the remaining case that there exists $i\in V\setminus S$  satisfying $$c_{S,i}
 >\deg(i)\left(1-\frac{2}{\vol(S)}\right)$$
 that is, there is a vertex $i$ that doesn't satisfy the inequality  \eqref{eq:condition-i-S}. 
 \begin{enumerate}
     \item[{Case} 1.]  $c_{S,i}=1$.
     
     Without loss of generality, we assume that $\{1,i\}\in E$  and  $\{2,i\}\not\in E$. 
     
     In this case, $\deg(i)\ge 2$,  $\deg(1)+\deg(2)\ge 1+2=3$, and   $$1>\deg(i)\left(1-\frac{2}{\deg(1)+\deg(2)}\right).$$ These imply  $\deg(i)=2=\deg(1)$ and $\deg(2)=1$. 
      Then, by taking  $\lambda=\frac13$, $z_{1i}=1$, $z_{12}=-\frac13$, $z_{ij}=\frac13$ for $j\not\in \{1,2\}$ with $j\sim i$, and  $z_{jj'}=0$  otherwise, we get a solution of  the 1-Laplacian eigenvalue problem  \eqref{eq:brief-1-Lap-1},    
      which implies that $\vec1_S$ is an eigenvector corresponding to the eigenvalue $\frac13$. 
          \item[{Case} 2.]  $c_{S,i}=2$.

Without loss of generality, we  assume that $\deg(1)\ge \deg(2)$ for simplicity.

     In this case, we have
    $$
     2>\deg(i)\left(1-\frac{2}{\deg(1)+\deg(2)}\right),$$
      $\deg(i)\ge3$ and  $\deg(1)+\deg(2)\ge 2+2=4$. Hence, we get  $\deg(i)=3$, $\deg(2)=2$ and  $\deg(1)\in\{2,3\}$. 
       We denote by $j$ the unique vertex in  $V\setminus S$ that is  adjacent to $i$. 
     \begin{enumerate}
         \item[{Case} 2.1.]  $\deg(i)=3$, $\deg(2)=2=\deg(1)$.
         
         In this case,  we take $\lambda=\frac12$, $z_{12}=0$, $z_{1i}=z_{2i}=1$,  $z_{ij}=\frac12$, 
         and  $z_{j_1j_2}=0$ otherwise.    Then, \eqref{eq:brief-1-Lap-1} is easy to check. This means that $\vec1_S$ is an eigenvector corresponding to the eigenvalue $\frac12$. 
         
         \item[{Case} 2.2.]  $\deg(i)=3=\deg(1)$,  $\deg(2)=2$.
         
         In this case, let $\lambda=\frac35$,  $z_{21}=\frac15=z_{ij}$, $z_{1i}=z_{2i}=z_{1i'}=1$, where $i'$ is the unique vertex in $V\setminus\{1,2,i\}$ that is adjacent to $1$,   and  $z_{j_1j_2}=0$ otherwise. Substituting these parameters in  \eqref{eq:brief-1-Lap-1} implies that $\vec1_S$ is an eigenvector corresponding to the eigenvalue $\frac35$.         
     \end{enumerate}
 \end{enumerate}
 
 \item[(II)-(III)] These two statements can be verified  by solving \eqref{eq:brief-1-Lap-1} with the help of  (I). 
\end{enumerate}
The proof is completed.
\end{proof}

At the end of this section, we establish a result  
which shows  that the size of the difference  set $$\{\text{eigenvectors of }\Delta_1\}\setminus\{\text{critical points of }F_1\}$$ can be very large on some graphs. 
\begin{itemize}
    \item \textbf{The Hausdorff dimension of the eigenspace corresponding to  an eigenvalue $\lambda$ of $\Delta_1$ may be larger than the Hausdorff dimension of the set of  critical points corresponding to  the critical value $\lambda$ of $F_1$}
    
\begin{figure}[ht]
\begin{center}
\begin{tikzpicture}[auto]
\node (1) at (0,0) {$1$};
\node (2) at (1,0) {$2$};
\node (3) at (-1,0.5) {$3$};
\node (4) at (-1,-0.5) {$4$};
\node (5) at (2,0.5) {$5$};
\node (6) at (2,-0.5) {$6$};
\draw (1) to (2);
\draw (1) to (3);
\draw (1) to (4);
\draw (4) to (3);
\draw (6) to (2);
\draw (5) to (2);
\draw (6) to (5);
\draw (1) circle(0.23);
\draw (2) circle(0.23);
\draw (3) circle(0.23);
\draw (4) circle(0.23);
\draw (5) circle(0.23);
\draw (6) circle(0.23);
\end{tikzpicture}\caption{The graph for  Example \ref{example:1-Lap-eigenfunction-not-critical}.}\label{fig:1-Lap-eigenfunction-not-critical}
\end{center}
\end{figure}

\begin{example}\label{example:1-Lap-eigenfunction-not-critical}

 In the graph $G$ shown in Figure \ref{fig:1-Lap-eigenfunction-not-critical}, applying  Proposition \ref{pro:eigenfunction-two-elements} (III) directly to the subsets $\{3,4\}$ and $\{5,6\}$,  $t\vec1_{\{3,4\}}+s\vec1_{\{5,6\}}$ is an  eigenvector w.r.t. the eigenvalue $\frac12$ of  $\Delta_1$  for any $t,s\in\R$ with $ts\le 0$. 
 In fact, the eigenspace of $\frac12$ is  $S_{\frac12}(\Delta_1)=\{t\vec1_{\{3,4\}}+s\vec1_{\{5,6\}}:ts\le 0\}$. 
 
 We will show that    $t\vec1_{\{3,4\}}+s\vec1_{\{5,6\}}$ is a critical point of $F_1$ if and only if $t=0$ or $s=0$. 
 In fact, if $t>0>s$,  by taking $\vec x=t\vec1_{\{3,4\}}+s\vec1_{\{5,6\}}$ and $\vec y=\vec1_{\{1\}}-\vec1_{\{2\}}$ in  Proposition \ref{pro:equivalent-critical}, we can prove that  $\Phi(\vec x,\xi,\vec y)- \frac12 \Psi(\vec x,\xi,\vec y)\le -2$ for any $\xi$, where $\Phi$ and $\Psi$ are  introduced in  Proposition \ref{pro:equivalent-critical}. To see this, note that  
\begin{equation*}
   \Phi(\vec x,\xi,\vec y)=-4+2z_{12}'(\xi)\text{ and }\Psi(\vec x,\xi,\vec y)=3(z_1'(\xi)-z_2''(\xi)),
\end{equation*}
where $z_{12}'(\xi)=\begin{cases}1,&\text{ if }\xi(1)\ge \xi(2),\\
-1,&\text{ if }\xi(1)< \xi(2)\end{cases}$, $z_1'(\xi)=\begin{cases}1,&\text{ if }\xi(1)\ge 0,\\
-1,&\text{ if }\xi(1)< 0\end{cases}$ and $z_1''(\xi)=\begin{cases}1,&\text{ if }\xi(1)> 0,\\
-1,&\text{ if }\xi(1)\le 0.\end{cases}$ 
Now, it is easy to see $\Phi(\vec x,\xi,\vec y)- \frac12 \Psi(\vec x,\xi,\vec y)\le -2$,  $\forall\xi\in \R^n$. By Proposition \ref{pro:equivalent-critical}, we obtain   the desired conclusion. 
Similarly, for $t<0<s$ we take $\vec y=-\vec 1_{\{1\}}+\vec 1_{\{2\}}$;  for $t,s>0$, take $\vec y=\vec 1_{\{1\}}+\vec 1_{\{2\}}$; and for  $t,s<0$, take $\vec y=-\vec 1_{\{1\}}-\vec 1_{\{2\}}$. In summary, for $ts\ne 0$,   $t\vec 1_{\{3,4\}}+s\vec 1_{\{5,6\}}$ is not a critical point of $F_1$.   In contrast to the linear combination of $\vec 1_{\{3,4\}}$ and $\vec 1_{\{5,6\}}$, it is surprising  that both  $\vec 1_{\{3,4\}}$ and $\vec 1_{\{5,6\}}$ are critical points of $F_1$ (also  by Proposition \ref{pro:equivalent-critical}).
\end{example}

\end{itemize}

\begin{pro}\label{pro:equivalent-critical}
    A nonzero vector 
$\vec x$ is a critical point of $F_1$ corresponding  to  the  critical value $\lambda$ if and only if 
for any 
vector  $\vec y\in \R^n$, there exists $\xi\in \R^n$ such that
$$\Phi(\vec x,\xi,\vec y)\ge \lambda \Psi(\vec x,\xi,\vec y),$$
where 
$$\Phi(\vec x,\xi,\vec y)=\sum_{\substack{\{i,j\}\in E\\x_i\ne x_j}} z_{ij} (y_i-y_j)+\sum_{\substack{\{i,j\}\in E\\x_i=x_j\\ \xi_i\ne\xi_j}} z_{ij}(\xi) (y_i-y_j)+\sum_{\substack{\{i,j\}\in E\\x_i=x_j\\ \xi_i=\xi_j}} |y_i-y_j|,$$
$$\Psi(\vec x,\xi,\vec y)=\sum_{\substack{i\in V\\x_i\ne0}}\deg(i)z_iy_i+
\sum_{\substack{i\in V\\x_i=0\\ \xi_i\ne0}}\deg(i)z_i(\xi)y_i+\sum_{\substack{i\in V\\x_i=0\\ \xi_i=0}}\deg(i)|y_i|,$$
$$ z_{ij}:=z_{ij}(\vec x)= \mathrm{sign}(x_i-x_j),\; z_i:=z_i(\vec x)= \mathrm{sign}(x_i),\; z_{ij}(\xi)= \mathrm{sign}(\xi_i-\xi_j)\;\text{ and } z_i(\xi)=\mathrm{sign}(\xi_i).$$
\end{pro}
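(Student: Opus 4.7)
The plan is to recognize $F_1=f/g$ (with $f(\vec x)=\sum_{\{i,j\}\in E}|x_i-x_j|$ and $g(\vec x)=\sum_{i\in V}\deg(i)|x_i|$) as the ratio of two convex piecewise-linear functions and to express the Clarke criticality condition $\vec 0\in\partial_C F_1(\vec x)$ in terms of $\Phi,\Psi$. Since $\partial_C F_1(\vec x)$ is closed convex, its containing $\vec 0$ is equivalent to the support-function inequality $F_1^\circ(\vec x;\vec y)\ge 0$ for every $\vec y\in\mathbb{R}^n$, where $F_1^\circ$ is Clarke's generalized directional derivative. My first key step is to interpret $\Phi(\vec x,\xi,\vec y)$ and $\Psi(\vec x,\xi,\vec y)$ as ordinary one-sided directional derivatives of $f$ and $g$ at the perturbed point $\vec x+\epsilon\xi$ for any sufficiently small $\epsilon>0$: indeed $\mathrm{sign}((x_i+\epsilon\xi_i)-(x_j+\epsilon\xi_j))$ equals $z_{ij}$ when $x_i\ne x_j$ and equals $z_{ij}(\xi)$ when $x_i=x_j$ but $\xi_i\ne\xi_j$; when $x_i=x_j$ and $\xi_i=\xi_j$, the perturbed point still has $(\vec x+\epsilon\xi)_i=(\vec x+\epsilon\xi)_j$, so the one-sided directional derivative of the edge term $|x_i-x_j|$ in direction $\vec y$ contributes $|y_i-y_j|$, matching the third summand of $\Phi$. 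The analogous case analysis applied to the vertex terms $|x_i|$ gives $\Psi(\vec x,\xi,\vec y)=g'(\vec x+\epsilon\xi;\vec y)$.

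Next set $h=f-\lambda g$, so $h(\vec x)=0$, and write $F_1(\vec w)-\lambda=h(\vec w)/g(\vec w)$ for $\vec w$ near $\vec x$. A direct one-sided differentiation yields $F_1'(\vec w;\vec y)=h'(\vec w;\vec y)/g(\vec w)-h(\vec w)\,g'(\vec w;\vec y)/g(\vec w)^2$, whose second summand vanishes as $\vec w\to\vec x$ because $h(\vec w)\to 0$ while $g'$ remains bounded. Since $f,g$ are piecewise linear, so is $h$, and $h'(\vec w;\vec y)$ is piecewise constant across the finitely many full-dimensional pieces of $h$ incident to $\vec x$. Hence Clarke's definition $F_1^\circ(\vec x;\vec y)=\limsup_{\vec w\to\vec x,\,t\to 0^+}[F_1(\vec w+t\vec y)-F_1(\vec w)]/t$ simplifies to
\[ F_1^\circ(\vec x;\vec y)=\frac{1}{g(\vec x)}\,h^\circ(\vec x;\vec y)=\frac{1}{g(\vec x)}\,\sup_{\vec w\text{ near }\vec x}h'(\vec w;\vec y). \]
Every full-dimensional piece of $h$ incident to $\vec x$ meets the ray $\{\vec x+\epsilon\xi:\epsilon>0\}$ for some $\xi$, so by the identification in the previous paragraph this supremum equals $\sup_{\xi\in\mathbb{R}^n}[\Phi(\vec x,\xi,\vec y)-\lambda\Psi(\vec x,\xi,\vec y)]$; the supremum is actually a maximum, since only finitely many sign patterns of $\xi$ arise.

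Putting the two steps together gives the chain of equivalences
\[ \vec x\text{ is critical at value }\lambda \iff F_1^\circ(\vec x;\vec y)\ge 0\ \ \forall\vec y \iff \sup_\xi[\Phi-\lambda\Psi]\ge 0\ \ \forall\vec y \iff \forall\vec y\ \exists\xi\colon\Phi\ge\lambda\Psi, \]
which is exactly the proposition. The main obstacle I anticipate is the careful case analysis in the first paragraph—in particular, verifying that the third case ($x_i=x_j$ with $\xi_i=\xi_j$ for an edge, or $x_i=\xi_i=0$ for a vertex) produces precisely the non-smooth contribution $|y_i-y_j|$ (respectively $\deg(i)|y_i|$) from the one-sided derivative of $|\cdot|$ across an unresolved tie, and checking that every full-dimensional piece of $h$ incident to $\vec x$ is indeed realised as $\{\vec x+\epsilon\xi:\epsilon>0\text{ small}\}$ for some $\xi$; after that, the argument is standard Clarke calculus for piecewise-linear functions.
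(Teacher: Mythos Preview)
Your proposal is correct and follows essentially the same approach as the paper. Both arguments start from Clarke's definition $F_1^\circ(\vec x;\vec y)=\limsup_{\vec w\to\vec x,\,t\to0^+}[F_1(\vec w+t\vec y)-F_1(\vec w)]/t$, exploit the piecewise linearity of $TV$ and $\|\cdot\|_1$ to compute the difference quotient exactly for small $t$, and then identify the supremum over perturbations $\vec w=\vec x+\xi$ with the finite maximum over sign patterns of $\xi$; the only organisational difference is that you factor through $h=f-\lambda g$ and a quotient rule, whereas the paper expands $TV(\vec x+\xi+t\vec y)\|\vec x+\xi\|_1-TV(\vec x+\xi)\|\vec x+\xi+t\vec y\|_1$ directly and then passes from $\lambda(\xi)=F_1(\vec x+\xi)$ to $\lambda$ in the limit, but the substance is identical.
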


\begin{proof}
By the definition of Clarke derivative, $\vec x$ is a critical point of $F_1$  
if and only if, for any $\vec y\in \R^n$,
\begin{equation*}
\limsup\limits_{\xi\to0,t\to 0^+}
\frac{1}{t}\left(\frac{TV(\vec x+\xi+t \vec y)}{\|\vec x+\xi+t \vec y\|_1}-\frac{TV(\vec x+\xi)}{\|\vec x+\xi\|_1}\right)\ge0
\end{equation*}
which is equivalent to
\begin{equation}\label{eq:RQ1-critical}
\limsup\limits_{\xi\to0,t\to 0^+}
\frac{ TV(\vec x+\xi+t \vec y)\|\vec x+\xi\|_1-TV(\vec x+\xi)\|\vec x+\xi+t \vec y\|_1}{t}\ge0,
\end{equation}
where $TV(\vec x):=\sum_{\{j,i\}\in E}|x_i-x_j| $ and $\|\vec x\|_1:= \sum_{i\in V}\deg(i)|x_i|$. 
Since both $TV(\cdot)$ and $\|\cdot\|_1$ are piecewise-linear and one-homogeneous, for  sufficiently small $t>0$, we have 
\begin{align*}
TV(\vec x+\xi+t \vec y)=TV(\vec x+\xi)+t\sum_{\substack{\{i,j\}\in E\\x_i+\xi_i\ne x_j+\xi_j}}z_{ij}(\vec x+\xi)(y_i-y_j)
+t\sum_{\substack{\{i,j\}\in E\\x_i+\xi_i= x_j+\xi_j}} |y_i-y_j|,
\end{align*}
and
$$\|\vec x+\xi+t \vec y\|_1=\|\vec x+\xi\|_1+t\sum_{\substack{i\in V\\x_i+\xi_i\ne0}}\deg(i)z_i(\vec x+\xi)y_i+t\sum_{\substack{i\in V\\x_i+\xi_i=0}}\deg(i)|y_i|.$$
Therefore, \eqref{eq:RQ1-critical} can be rewritten as
$$\limsup\limits_{\xi\to0} \Phi(\xi)-\lambda(\xi) \Psi(\xi)\ge 0$$
where $\lambda(\xi):=\frac{TV(\vec x+\xi)}{\|\vec x+\xi\|_1}$,
$$\Phi(\xi):=\sum_{\substack{\{i,j\}\in E\\x_i+\xi_i\ne x_j+\xi_j}}z_{ij}(\vec x+\xi)(y_i-y_j)
+\sum_{\substack{\{i,j\}\in E\\x_i+\xi_i= x_j+\xi_j}} |y_i-y_j|,$$
$$
\Psi(\xi):=\sum_{\substack{i\in V\\x_i+\xi_i\ne0}}\deg(i)z_i(\vec x+\xi)y_i+\sum_{\substack{i\in V\\x_i+\xi_i=0}}\deg(i)|y_i|.$$
Note that, for $\xi$ sufficiently close to  $\vec0$, $\Phi(\xi)=\Phi(\vec x,\xi,\vec y)$ and $\Psi(\xi)=\Psi(\vec x,\xi,\vec y)$. Since  $\Phi(\cdot)$ and $\Psi(\cdot)$ are zero-homogeneous, \eqref{eq:RQ1-critical} is further  equivalent to 
$$\max\limits_{\xi\in \R^n}\Phi(\vec x,\xi,\vec y)-\lambda \Psi(\vec x,\xi,\vec y)\ge 0,$$
with $\lambda:=\frac{TV(\vec x)}{\|\vec x\|_1}$. 
This shows that  $\vec x$ is a critical point of $F_1$ if and only if 
$$\inf\limits_{\vec y\in \R^n}\max\limits_{\xi\in \R^n}\Phi(\vec x,\xi,\vec y)-\lambda \Psi(\vec x,\xi,\vec y)\ge 0.$$
The proof is completed.
\end{proof}

\section{Open problems}

We leave some open questions as 
possible  future works. 
Theorem  \ref{thm:p-lap-C} poses the  following question:
\begin{open}
How the nodal domains of the $p$-Laplacian variational eigenfunctions vary with respect to $p$?
\end{open}

Following Proposition   \ref{pro:usc-mult}, we are interested in the similar property for  $\gamma$ and $\gamma^+$.
\begin{open}
Are the $\gamma$-multiplicity and the  $\gamma^+$-multiplicity upper semi-continuous?   
?\end{open}

Proposition   \ref{pro:minmax-homological}   leads us to the following question for $\lambda^\pm_k(\Delta_p)$. 
\begin{open}
Are the variational eigenvalues $\lambda^-_k(\Delta_p)$ using the Krasnoselskii genus $\gamma^-$, and the min-max eigenvalue $\lambda^+_k(\Delta_p)$ involving $\gamma^+$, homological eigenvalues of $\Delta_p$?
\end{open}

Corollary \ref{cor:=} encourages us to post the following question: 
\begin{open}
Is the equality $\lambda^-_k(\Delta_p)=\lambda_k(\Delta_p)=\lambda^+_k(\Delta_p)$ true?
\end{open}

As a possible way to strengthen  Theorem \ref{thm:p-lap-C}, we ask:
\begin{open}
Is there a universal constant $C>0$ such that for any $p\ge 1$, and for any $k$, \[\lambda_k^-(\Delta_p)\ge \frac{h^p_k}{C k^{2p}}\;?
 \]  
\end{open}

\textbf{Acknowledgement}: The author would  like to thank Professor Kung-Ching Chang and Professor J\"urgen Jost for their encouragement  and support. 
The author would like to thank Professor Francesco Tudisco for many helpful discussions,  and for bringing the very  hard-to-find paper \cite{Amghibech} to my attention. 
 We would also like to thank the anonymous referee for the very valuable  comments. 
 This research was supported by grants from  Fundamental Research Funds for the Central Universities (No. 7101303046).

\renewcommand\thesection{\Alph{section}}
\setcounter{section}{0}
\section{Appendix}

\begin{lemma}\label{lem:elementary-inequality}
For any $t\ge 1$, $b,a\in\R$, 
$$\left||b|^{t}\mathrm{sign}(b)-|a|^{t}\mathrm{sign}(a)\right|
\ge|b-a|(\frac{|b|^{t}+|a|^{t}}{2})^{1-\frac 1t}$$

\end{lemma}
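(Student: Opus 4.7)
The plan is to exploit the oddness of the map $\phi(z):=|z|^{t}\mathrm{sign}(z)$ to reduce to two simple cases, and then in the non-trivial case to introduce a symmetric change of variables that turns the inequality into a one-variable inequality provable by convexity.

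First I would observe that $\phi$ is odd, so the inequality is invariant under $(a,b)\mapsto(-a,-b)$; thus I may assume $b\ge 0$, and by the symmetry $a\leftrightarrow b$ I may further assume $|b|\ge|a|$. This leaves two cases: (i) $b\ge a\ge 0$, and (ii) $b>0>a$.

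In case (ii), setting $x=b$ and $y=-a\ge 0$, the left-hand side becomes $x^{t}+y^{t}$ and $|b-a|=x+y$, so the desired inequality is equivalent to
\[
\left(\frac{x^{t}+y^{t}}{2}\right)^{1/t}\;\ge\;\frac{x+y}{2},
\]
which is precisely the power-mean inequality for $t\ge 1$; this case is immediate.

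In case (i), set $x=b\ge y=a\ge 0$. I introduce the two ``symmetric'' quantities
\[
M:=\left(\frac{x^{t}+y^{t}}{2}\right)^{1/t},\qquad \alpha:=\frac{x^{t}-y^{t}}{x^{t}+y^{t}}\in[0,1],
\]
so that $x=M(1+\alpha)^{1/t}$ and $y=M(1-\alpha)^{1/t}$. Substituting these into the target inequality $x^{t}-y^{t}\ge (x-y)M^{t-1}$ and dividing by $M^{t}$ (the $M=0$ case being trivial) collapses it to the clean scalar statement
\[
(1+\alpha)^{1/t}-(1-\alpha)^{1/t}\;\le\;2\alpha,\qquad \alpha\in[0,1],\ t\ge 1.
\]
To prove this, I would study $\psi(\alpha):=(1+\alpha)^{1/t}-(1-\alpha)^{1/t}-2\alpha$. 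Its endpoint values are $\psi(0)=0$ and $\psi(1)=2^{1/t}-2\le 0$, and a direct computation gives
\[
\psi''(\alpha)=\tfrac{1}{t}\!\left(\tfrac{1}{t}-1\right)\!\left[(1+\alpha)^{1/t-2}-(1-\alpha)^{1/t-2}\right].
\]
Both factors are non-positive for $t\ge 1$ and $\alpha\in[0,1)$, so $\psi''\ge 0$; hence $\psi$ is convex on $[0,1]$, and convexity together with $\psi(0)=0$, $\psi(1)\le 0$ yields $\psi(\alpha)\le(1-\alpha)\psi(0)+\alpha\psi(1)\le 0$, finishing case (i).

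The main obstacle is finding the right reformulation in case (i): naive attempts via the mean-value theorem, Hermite--Hadamard, or the Hölder/power-mean inequalities applied directly to $\int_{y}^{x}ts^{t-1}\,ds$ all fall short because the target exponent $(t-1)/t$ on the right-hand mean is strictly larger than what such crude bounds produce. The parametrisation $(x,y)=(M(1\pm\alpha)^{1/t}\ldots)$ is what exposes the hidden symmetry and converts the estimate into a scalar convexity statement.
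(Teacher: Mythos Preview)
Your proof is correct. Both you and the paper dispatch the opposite-sign case identically via the power-mean inequality, but your treatment of the same-sign case $x\ge y\ge 0$ is genuinely different.

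The paper proves the \emph{stronger} inequality $x^{t}-y^{t}\ge (x-y)(x^{t}+y^{t})^{1-1/t}$ (no factor $1/2$) by applying convexity of $u\mapsto u^{t/(t-1)}$ to the weighted average with weights $(x-y)/x$ and $y/x$, obtaining $\bigl(\frac{x^{t}-y^{t}}{x-y}\bigr)^{t/(t-1)}\ge \frac{x^{t+1}-y^{t+1}}{x-y}$, and then concludes via the algebraic identity $x^{t+1}-y^{t+1}-(x-y)(x^{t}+y^{t})=xy(x^{t-1}-y^{t-1})\ge 0$. Your parametrisation $(x,y)=(M(1+\alpha)^{1/t},M(1-\alpha)^{1/t})$ instead scales out the homogeneity and reduces everything to the convexity of the single-variable function $\psi(\alpha)=(1+\alpha)^{1/t}-(1-\alpha)^{1/t}-2\alpha$ on $[0,1]$. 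Your route is cleaner and more mechanical once the substitution is spotted; the paper's route is less obvious but yields the sharper constant for free, which your scalar inequality $(1+\alpha)^{1/t}-(1-\alpha)^{1/t}\le 2\alpha$ does not directly give (one would need the tighter bound $\le 2^{1/t}\alpha$).
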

\begin{proof}
To prove this, we only need to show that for any $b>a>0$, 
$$b^t-a^{t}
\ge(b-a)(\frac{b^{t}+a^{t}}{2})^{1-\frac 1t}$$
$$b^t+a^{t}
\ge(b+a)(\frac{b^{t}+a^{t}}{2})^{1-\frac 1t}$$
The second one follows from the mean power inequality $(\frac{b^t+a^t}{2})^{\frac1t}\ge \frac{b+a}{2}$. The first one clearly holds when $t=1$. Now, suppose $t>1$. By the convexity of the function $x\mapsto x^{\frac{t}{t-1}}$ for $x\in(0,+\infty)$, we have 
$$\frac{b-a}{b}\left(\frac{b^t-a^t}{b-a}\right)^{\frac{t}{t-1}}+\frac{a}{b}(a^{t-1})^{\frac{t}{t-1}}\ge (b^{t-1})^{\frac{t}{t-1}}$$
which implies
$$(\frac{b^t-a^t}{b-a})^{\frac{t}{t-1}}\ge \frac{b^{t+1}-a^{t+1}}{b-a}$$
Since $b^{t+1}-a^{t+1}-(b-a)(b^t+a^t)=ab(b^{t-1}-a^{t-1})>0$, we have $$(\frac{b^t-a^t}{b-a})^{\frac{t}{t-1}}\ge \frac{b^{t+1}-a^{t+1}}{b-a}\ge b^t+a^t$$
and thus
$$b^t-a^{t}
\ge(b-a)(b^{t}+a^{t})^{1-\frac 1t}\ge(b-a)(\frac{b^{t}+a^{t}}{2})^{1-\frac 1t},$$
which proves the first inequality.  
\end{proof}

\begin{remark}
For any $t\ge 1$, $b,a\in\R$, 
$$\left||b|^{t}\mathrm{sign}(b)-|a|^{t}\mathrm{sign}(a)\right|
\le t|b-a|(\frac{|b|^{t}+|a|^{t}}{2})^{1-\frac 1t}.$$
This inequality was first  established in \cite{Amghibech}. 
\end{remark}

\end{document}